\def\s{\mathfrak s}
\def\id#1{{\mathfrak{#1}}}      % an ideal
\def \ZZ{\mathbb{Z}}
\def \Z{\mathcal{Z}}
\def\<#1>{{\left\langle{#1}\right\rangle}}
\DeclareMathOperator{\norm}{{\mathcal N}}
\theoremstyle{plain}
\newcommand{\verbatimfont}[1]{\renewcommand{\verbatim@font}{\ttfamily#1}}
\def\ZZ{\mathbb Z}
\def\Z{{\mathbb Z}}             % integers
\def\Q{{\mathbb Q}}             % rationals
\newcommand{\HGM}{\text{HGM}}
\newcommand\TODO[1]{\textcolor{red}{#1}}
\begin{document}

\title{On the conductor of a family of \text{Frey hyperelliptic curves}}

\author{Pedro-Jos\'e Cazorla Garc\'ia}
	
\address{Departamento de Matem\'atica Aplicada, ICAI, Universidad Pontificia Comillas, Madrid, 28015, Spain}  \email{pjcazorla@comillas.edu}

\author{Lucas Villagra Torcomian}
	
\address{Department of Mathematics, Simon Fraser University, Burnaby, BC V5A 1S6, Canada.}  \email{lvillagr@sfu.ca}

\keywords{Generalized Fermat Equation, modular method, Frey hyperelliptic curves, cluster pictures, conductor.}
\subjclass[2020]{Primary 11D41. Secondary 11D61, 11G30, 11G20.}

\begin {abstract}
In the breakthrough article \cite{Darmon}, Darmon presented a program to study Generalized Fermat Equations (GFE) via abelian varieties of $\GL_2$-type over totally real fields.
So far, only  Jacobians of some Frey hyperelliptic curves have been used with that purpose. In the present article, we  show how most of the known Frey hyperelliptic curves are particular instances of a more general biparametric family of hyperelliptic curves $C(z,s)$. Then, we apply the cluster picture methodology to compute the conductor of $C(z,s)$ at all odd places, for some general parameters $z,s \in \Z$, not related a priori to any GFE.

As a Diophantine application, we specialize $C(z,s)$ in
some particular values $z_0$ and $s_0$, and we find the conductor exponent at odd places of the natural Frey hyperelliptic curves attached to $Ax^p+By^p=Cz^r$ and $Ax^r+By^r=Cz^p$, generalizing the results on \cite{CelineClusters} and opening the door for future research in GFE with coefficients. Moreover, we show how a new Frey hyperelliptic curve for $x^2+By^r=Cz^p$ can be constructed in this way, giving new results on the conductor exponents for this equation. 

Finally, following the recent approach in \cite{GolfieriPacetti}, we consider the Frey representations attached to a general signature $(q,r,p)$ via hypergeometric motives and, using $C(z,s)$, we compute the wild part of the conductor exponent at primes above $q$ and $r$ of the residual representation modulo a prime above $p$.
\end {abstract}

\maketitle

\section*{Introduction}

\subsection*{Historical background}

Dating back to Ancient Greece, Diophantine equations are among the most classical areas of mathematics, and include many of its most famous problems, such as Fermat's Last Theorem, originally stated by Fermat in the 17th century. This result affirms that the only integer solutions $(a,b,c)$ to the equation
 \begin{equation}
 \label{eqn:flt}
 x^n+y^n=z^n, \quad n\ge3,
 \end{equation}
are the trivial ones, i.e.\ those satisfying $xyz=0$. After Wiles' proof of Fermat's assertion \cite{Wiles}, many researchers have considered generalizations of \eqref{eqn:flt}, which are broadly known as \emph{Generalized Fermat Equations} (or GFE for short). These are Diophantine equations of the form
\begin{align}\label{eq:GFE}
    Ax^q+By^r=Cz^p,
\end{align}
 where $A$, $B$ and $C$ are fixed non-zero  integers. The triple of exponents $(q,r,p)$, which are allowed to be variables, is usually called the \textit{signature} of the equation.  
 
 Matiyasevich \cite{Matiyasevich} answered Hilbert's tenth problem by showing that there cannot be a general algorithm for resolving an arbitrary Diophantine equation, which, in particular, makes the determination of all solutions of \eqref{eq:GFE} an extremely difficult problem.
 
However, the strategy pioneered by Wiles for the study of \eqref{eqn:flt}, which is now referred to as \textit{the modular method}, applies to a wide variety of GFE, and can be used to show that some instances of \eqref{eq:GFE} do not have solutions.

For the convenience of the reader, we give a high-level overview of the modular method (see \cite{Siksek,SiksekKhawaja} for a more detailed explanation). Without loss of generality, let us assume that $p$ is a variable prime in \eqref{eq:GFE}. The main steps of the modular method are the following:
\begin{enumerate}
    \item Suppose for contradiction that \eqref{eq:GFE} has a non-trivial solution $(a,b,c)$\footnote{To be more precise, we want to consider non-trivial \textit{primitive} solutions, i.e.\ such that  $\gcd(a,b,c)=1$.}. Then, a Galois representation $\rho$ is attached to $(a,b,c)$. This representation should have bounded conductor and its image should lie in $\GL_2(\F_p)$.
    
    \item The representation $\rho$ is shown to be modular. In other words, it arises from a modular form of weight $2$ and level equal to the Serre level $N$  of $\rho$ (see \cite{SerreConjecture}). The key feature of the modular method is that $\rho$ can be constructed so that $N$ is independent of $(a,b,c)$.

    \item Finally, all representations coming from modular forms of weight $2$ and level $N$ are considered. If it can be shown that none of them are isomorphic to $\rho$, a contradiction is reached and \eqref{eq:GFE} is shown to not have non-trivial solutions.
\end{enumerate}

\subsection*{State of the art and methodology} 

In order to construct the representation $\rho$ in step (1), one is usually able to attach a geometric object to the solution. Classically, this object has been an elliptic curve, which receives the name of  \emph{Frey elliptic curve}. %However,  Frey elliptic curves cannot be constructed for many values of $(q, r, p)$, making step (1) substantially more challenging.
However, so far, it is only known how to associate a Frey elliptic curve for a few families of values $(q,r,p)$ (see e.g.\ \cite[Section 4]{DarmonGranville} for a discussion about this).

To overcome this difficulty, and in order to study new signatures, Darmon \cite{Darmon} proposes to replace elliptic curves by higher dimensional abelian varieties, which become of $\GL_2$-type over a certain totally real number field. Since Darmon's celebrated paper, Jacobians of \emph{Frey hyperelliptic curves} have begun to be considered as a source of the Galois representation $\rho$ (see for example \cite{BCDF, BCDF2, ChenKoutsianas}). The interested readers can look at the survey article \cite{ChenKoutsianasSurvey} for a more detailed explanation of the state of the art in the subject.

In his article, Darmon showed how the varieties appearing in the program share some Galois structures.
Moreover, he explained in a clear diagram how to propagate modularity between them once suitable modularity
lifting theorems become available (see \cite[p.\ 433]{Darmon}). Due to recent breakthroughs in this direction, proving modularity is no longer the main difficulty of the method. On the other hand, step (2) requires the computation of the Serre level of the Galois representation, and so it is necessary to determine the Frey curve's conductor. When working with elliptic curves, this is straightforward thanks to Tate's algorithm \cite{Tate}. However, there is no routine to compute the conductor when working with hyperelliptic curves of genus $g\ge2$, and so its calculation becomes a substantially harder problem. %This is currently the main challenge when applying the modular method.

To overcome the lack of an algorithm, the pioneer article by Dokchitser, Dokchitser, Maistret and Morgan \cite{DDMM} introduced the framework of \emph{cluster pictures}. Broadly speaking, the differences of the roots of the defining polynomial of the curve are studied in order to construct a combinatorial object, from which the conductor of the curve at odd places can be read. 
We invite readers interested in the topic of cluster pictures to also read the expository article \cite{BBB}.

In the recent paper \cite{CelineClusters}, the aforementioned methodology of cluster pictures has been used for some well-known Frey hyperelliptic curves from the literature. Specifically, the authors study Kraus's curve $C_{r,r}$ \cite{Kraus}, as well as Darmon's curves $C^{\pm}_r$ \cite{Darmon}, used, respectively, for signatures $(r,r,p)$ and $(p,p,r)$.

\subsection*{Our contribution} 

Currently, as we previously discussed, the main challenge of the modular method lies in the computation of the conductor, and this is why we will focus on this problem in the present article. 
%Inspired by the work on \cite{CelineClusters},
%in the present article 
Specifically, inspired by the work on \cite{CelineClusters}, we compute the conductor exponents at odd primes of the following biparametric family of hyperelliptic curves:
\begin{align}
    \label{eqn:C}
      C(z,s)\colon y^2 =  (-z)^\frac{r-1}{2}xh\left(2-\frac{x^2}{z}\right)+s,
\end{align}
 where $s, z\in \ZZ$ are any integers satisfying \ref{item1} and \ref{item2} (not necessarily linked to any GFE), {$r\ge5$ is a prime number} and $h(x)$ is the minimal polynomial of $\zeta_r+\zeta_r^{-1}$, with $\zeta_r$ a primitive $r$-th root of the unity. Our motivation to study this family is twofold. Firstly, we shall see in Section \ref{sec:applications} that certain values of $z$ and $s$ give rise to Frey hyperelliptic curves for GFE of signatures $(p,p,r)$, $(r,r,p)$ and $(2,r,p)$. By working with $C(z,s)$ before specializing the relevant values of $z$ and $s$, we can adopt a uniform treatment for all three cases.

We remark that the computation of the conductor for these three signatures represents a significant advancement in this direction, since, for $r\ge5$, the conductor has only been computed for $C_{r,r}$ and $C^\pm_r$, i.e.\  when $A=B=C=1$, while our calculations work in full generality. Also, to the best of our knowledge, there are no results in the literature on the conductor of the Frey hyperelliptic curve for the signature $(2,r,p)$. While this article was under the submission process, we became aware of the concurrent work of Azon \cite{Martinpreprint}, where the author also provides improvements for the signatures $(r,r,p)$ and $(p,p,r)$ involving coefficients. Azon's paper includes different computations, such as the inertial type of Galois representations and the conductor exponent at primes above $2$. These aspects are beyond the scope of this article. While the computations of the conductor at odd primes match for the signatures $(r,r,p)$ and $(p,p,r)$ (see Remark \ref{rem:martin}), the remaining Diophantine applications in Section \ref{sec:applications} are not addressed in \cite{Martinpreprint}.

Our second application to $C(z,s)$ is more ambitious. In the recent promising paper \cite{GolfieriPacetti} the authors show how to construct a Galois representation (satisfying the standard desired conditions) for an arbitrary signature $(q,r,p)$, through hypergeometric motives. Moreover, it is known how  to calculate the conductor exponent of such a representation for primes not dividing $qr$ (see the second theorem in page 6 of \cite{GolfieriPacetti}). In Section \ref{sec:applications}, we relate this representation with the one attached to the Jacobian of $C(z,s)$, allowing us to compute the wild conductor exponent at the remaining cases, i.e.\ at primes above $q$ and $r$. The combination of the results in \cite{GolfieriPacetti} with our findings opens the door to the application of the modular method for any signature $(q,r,p)$.

%{\color{blue} Creo que quitaría esto}
%\TODO{Shortly after submitting our article as a preprint, Azon \cite{Martinpreprint} also published a preprint, where he computes the conductor exponent for a family of curves compatible with the signatures $(p,p,r)$ and $(r,r,p)$ (see Theorem 5.10), but which are not a priori compatible with signatures $(2,r,p)$ and $(q,r,p)$. In the cases where our results overlap, we recover the same conductor exponent.}
%\TODO{He metido un remark en algo que creo que nosotros hacemos mejor (ver Remark \ref{rmk:martinirred}). También diría algo de que no requerimos coprimalidad en ningún momento (quizá ponerlo antes de la última sección)}

\vspace{5pt}

Having established the scope of the article, we now present the  main result, which allows us to compute the odd conductor exponents of $C = C(z,s)$, both over $\Q$ and over $K = \Q(\zeta_r + \zeta_r^{-1})$, where $\zeta_r$  is a  primitive $r$-th root of unity. %\TODO{Given $z, s\in \ZZ$, let $F(x)$ be the polynomial on the right hand side of (\ref{eqn:C}), defining the curve $C(z,s)$. }

\begin{thm*} 
Let $M \in \{\Q, K\}$ and let $C/M$ be the curve defined in \eqref{eqn:C} where $z$ and $s$  satisfy $\ref{item1}$ and $\ref{item2}$. Then the conductor exponent of $C$ for all odd primes of bad reduction is given in Table~\ref{table:conductorQ} for $M = \Q$ and in Table~\ref{table:conductorK} for $M = K$. 
\end{thm*}

\subsection*{Organization of the paper}
A brief introduction to the basic notions about hyperelliptic curves and cluster pictures is given in Section \ref{Sec:preliminaries}. In a short Section \ref{Sec:roots}, we explicitly determine the roots of the polynomial $F(x)$ defining the curve (\ref{eqn:C}), along with some of their basic properties. In the following two sections we prepare the scenario for proving the main theorem: in Section \ref{sec:clusterpictureC} we find the cluster picture attached to (\ref{eqn:C}), both over $\Q$ and over $K$, while in Section \ref{Sec:ramification} we study in more detail the polynomial $F(x)$, by determining its splitting field and the associated ramification index, as well as giving criteria for the irreducibility of $F(x)$. In Section \ref{Sec:conductor}, we prove the main results of the article by explicitly computing the conductor exponent at odd places for  (\ref{eqn:C}). Finally, in Section \ref{sec:applications} we show some of the applications to GFE, by specializing (\ref{eqn:C}) to particular values.

\subsection*{Acknowledgements} We thank Tim Dokchitser for sharing with us his \verb|Magma|'s code to compute cluster pictures, as well as C\'eline Maistret for allowing us to use the package of \cite{BBB} to draw the cluster pictures. We also thank Imin Chen and Franco Golfieri for valuable comments, and
%The second author would like to thank Imin Chen and 
Ariel Pacetti for helpful discussions. We would like to thank the anonymous referee for valuable comments that helped to improve the quality of this work. 

\subsection*{Funding} The first author is supported by Universidad Pontificia Comillas. The second author is supported by a PIMS-Simons postdoctoral fellowship.  

\subsection*{Data Availability} Data sharing not applicable to this article as no datasets were generated or analyzed during the current study. The \texttt{Magma} programs supporting computations in this paper can be found in \cite{github}.

\subsection*{Conflicts of interest} The authors have no conflicts of interest to disclose.%Data sharing not applicable to this article as no datasets were generated or analyzed during the current study. The \texttt{MAGMA} programs supporting computations in this paper can be found in \cite{github}.

\section{Preliminaries}
\label{Sec:preliminaries}

In this section, we will present the tools that we shall use throughout this article, along with the main results that we will need. For simplicity, this section is broken into several subsections.

\subsection{Hyperelliptic curves}

In this section, we define some concepts about hyperelliptic curves that will be relevant for us. The exposition in this section is by no means exhaustive and we invite the interested reader to consult \cite{Liu1,Liu2,DDMM} for an excellent review of the topic. For now, let us introduce two basic definitions.

\begin{definition}
    \label{def:hyperellipticcurve}
    Let $g \ge 2$ be an integer, let $K$ be a field of characteristic not equal to $2$, and let $f(x) \in K[x]$ be a separable polynomial of degree $2g+1$ or $2g+2$. Suppose that the discriminant of $f$ is not equal to $0$. Then, the curve with affine equation 
    \begin{equation}
    \label{eqn:weierstraass}
    C: y^2 = f(x)
    \end{equation}
    is called a \textit{hyperelliptic curve of genus} $g$.
\end{definition}

\begin{definition}
\label{def:discriminantcurve}
    Let $C$ be a hyperelliptic curve of genus $g$ with equation as in \eqref{eqn:weierstraass}, and assume that $f(x)$ is monic and has degree $2g+1$. Let $\Delta_{f(x)}$ be  the discriminant of the polynomial $f(x)$. Then the quantity
    \[\Delta_C = 2^{4g}\Delta_{f(x)},\]
    is called the \textit{discriminant} of $C$.
\end{definition}

    %If $K$ is a number field with ring of integers $\cO_K$ and $\id{q}$ is a prime of $\cO_K$ dividing $\Delta_C$, the reduction of $C$ modulo $\id{q}$ is not a hyperelliptic curve. In such a case we say that $\id{q}$ is a \textit{place of bad reduction} for $C$.

    %\TODO{If $K$ is a number field with ring of integers $\cO_K$ and $\id{q}$ is a prime of $\cO_K$ dividing $\Delta_C$ and such that the reduction of $C$ modulo $\id{q}$ is not a hyperelliptic curve, we say that $\id{q}$ is a \textit{place of bad reduction} for $C$. We remark that, since $\Delta_C$ is not necessarily minimal, not all primes $\id{q}$ dividing $\Delta_C$ are places of bad reduction.}

The main object of study of the present article is the conductor associated to a curve $C$. As shown by the following definition, the conductor is a product of the primes dividing $\Delta_C$, raised to a certain exponent.

\begin{definition}
    \label{def:conductor}
    Let $K$ be a number field with ring of integers $\cO_K$ and let $C/K$ be a hyperelliptic curve as in \eqref{eqn:weierstraass}. The \textit{conductor} of $C$ is the ideal of $\cO_K$ given by
    \[\mathfrak{n}(C) = \prod_{\id{q} \mid \Delta_C} \id{q}^{\mathfrak{n}(C/K_{\id{q}})},\]
    where $K_\id{q}$ denotes the completion of $K$ at $\id{q}$ and the integer ${\mathfrak{n}(C/K_{\id{q}})}$ is called the \textit{conductor exponent} of $C$ at $\id{q}$, which can be defined as in \cite[Section 2.1]{Serre}. The primes $\id{q}$ for which ${\mathfrak{n}(C/K_{\id{q}})} > 0$ are called \emph{primes of bad reduction for $C$}.
\end{definition}

\begin{remark}
   To be fully precise, we would need to define    the conductor in terms of the Tate module and Galois actions (see for example \cite[Section 4]{Ulmer}). In order to simplify the exposition, we have preferred to stay away from these technicalities and use the previous ``working definition''.
\end{remark}

We remark that the computation of the conductor exponents ${\mathfrak{n}(C/K_{\id{q}})}$ is highly non-trivial for curves of genus $g \ge 2$ and requires the use of many of the techniques that we present in this article. The conductor exponent is composed of two parts, which depend on the splitting field of the defining polynomial of the curve, as shown in Lemma \ref{lemma:twoparts}. Before that, we introduce the following definition.

\begin{definition}
    Let $p$ be a prime number and let $L$ and $K$ be local fields with  $\Q_p \subseteq K \subseteq L$. Let $e_{L/K}$ denote the ramification index of the extension $L/K$. We say that $L/K$ is a \textit{wild extension} if $p \mid e_{L/K}$, and a \textit{tame extension} otherwise. 
\end{definition}

\begin{lemma}
    \label{lemma:twoparts}
    Let $p$ be a prime number, $K$ a local field with $\Q_p \subseteq K$ and let $C/K$ be a hyperelliptic curve of genus $g$ with defining polynomial $f(x)$. Then, the conductor exponent $\mathfrak{n}(C/K)$ decomposes as
    \[\mathfrak{n}(C/K) = \mathfrak{n}_{wild}(C/K) + \mathfrak{n}_{tame}(C/K),\]
    where $\mathfrak{n}_{wild}(C/K)$ and $\mathfrak{n}_{tame}(C/K)$ are called, respectively, the wild and the tame part of the conductor. In addition, if $L$ is the splitting field of $f(x)$, we have that $\mathfrak{n}_{wild}(C/K) = 0$ if the extension $L/K$ is tame.
\end{lemma}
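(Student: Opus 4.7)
The plan is to unwind Serre's definition of the conductor exponent (recalled in \cite[Section 2.1]{Serre}) applied to the $\ell$-adic Galois representation $V = V_\ell(\mathrm{Jac}(C))$ for some auxiliary prime $\ell \neq p$. Writing $(G_i)_{i \geq 0}$ for the lower-numbered ramification filtration of the absolute Galois group of $K$ (so $G_0 = I$ is the inertia and $G_1 = P$ the wild inertia), one has
\[
\mathfrak{n}(C/K) \;=\; \sum_{i \geq 0} \frac{|G_i|}{|G_0|}\, \mathrm{codim}\, V^{G_i}.
\]
The first assertion is then purely formal: I would \emph{define} the tame part as the $i=0$ summand $\mathfrak{n}_{tame}(C/K) := \mathrm{codim}\, V^{I}$, and the wild part (essentially the Swan conductor) as the sum over $i \geq 1$.

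For the vanishing claim, the plan is to reduce it to showing that $P$ acts trivially on $V$: every $G_i$ with $i \geq 1$ lies in $P$, so if $P$ acts trivially then $V^{G_i} = V$ and each summand of $\mathfrak{n}_{wild}(C/K)$ is zero. To prove this triviality, I would use the fact, established in the hyperelliptic setting through the cluster picture formalism of \cite{DDMM}, that when the splitting field $L$ of $f(x)$ is tame over $K$, the Jacobian $\mathrm{Jac}(C)$ acquires semistable reduction over some tame extension $M/K$ containing $L$. The underlying point is that rationality of the Weierstrass points of $C$ over $L$ pins down the combinatorial data of the cluster picture, and after possibly a further tame extension the semistability criterion of \cite{DDMM} is met.

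Granting this input, Grothendieck's monodromy theorem implies that $I_M$ acts unipotently on $V$, so its image in $\mathrm{GL}(V)$ is a torsion-free pro-$\ell$ group; the pro-$p$ subgroup $P_M \subseteq I_M$ must therefore act trivially. Since $M/K$ is tame, $[I : I_M]$ is prime to $p$, forcing $P \subseteq I_M$ and hence $P = P_M$. Thus $P$ acts trivially on $V$, giving $\mathfrak{n}_{wild}(C/K) = 0$ as required. The main technical obstacle I anticipate is the semistability-after-tame-extension step, which is the only part genuinely specific to hyperelliptic Jacobians; I would cite it from \cite{DDMM} rather than reprove it, while the rest of the argument is essentially a formal manipulation of Serre's definition together with standard properties of the wild inertia subgroup.
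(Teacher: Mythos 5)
Your argument is correct and spells out what the paper defers to a citation: the paper's proof of this lemma is simply a one-line reference to \cite[Definition 2.2 and Proposition 2.11]{CelineClusters}. The decomposition you give (tame part $=$ codimension of $I$-invariants in Serre's sum, wild part $=$ Swan conductor) is exactly that of the cited definition, and your route to the vanishing claim---semistability of $\text{Jac}(C)$ over a tame extension, Grothendieck's monodromy theorem forcing unipotent action of $I_M$, and the observation that a pro-$p$ group admits no nontrivial continuous map to a pro-$\ell$ group---is sound and is essentially what the cited proposition encapsulates, so you are reproving rather than re-routing. The one step you correctly flag as non-formal, that $\text{Jac}(C)$ attains semistable reduction over a tame extension $M/K$ containing $L$, does follow from the semistability criterion of \cite{DDMM} but is worth pinning down: over $L$ the depths $d_{\mathfrak{s}}$ lie in $\tfrac{1}{e_{L/K}}\mathbb{Z}$, tameness of $L/K$ makes $e_{L/K}$ coprime to $p$ so a tame totally ramified base change clears the denominators, and the remaining parity condition $\nu_{\mathfrak{s}}\in 2\mathbb{Z}$ can be forced by a further quadratic extension, which is again tame precisely because $p$ is odd. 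This is where the standing hypothesis $p\neq 2$ (imposed at the start of Section~\ref{Sec:introclusters} but not restated in the lemma) is quietly indispensable; without it the quadratic extension could be wildly ramified and the reduction to Grothendieck's theorem would not close. A small cosmetic point: you do not need the image of $I_M$ to be torsion-free---a closed subgroup of a pro-$\ell$ group is pro-$\ell$, and the pro-$p$ versus pro-$\ell$ incompatibility already kills the image of $P_M$.
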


\begin{proof}
    This is \cite[Definition 2.2 and Proposition 2.11]{CelineClusters}.
\end{proof}

As we shall see in the following section, the behaviour of the wild and tame part of the conductor is quite different, and so is the manner of computing them (see Theorem \ref{thm:formulascluster}).

\subsection{Clusters}
\label{Sec:introclusters}
Let $p \neq 2$ be a prime number, and let $K$ be a local field with $\Q_p \subseteq K$, absolute Galois group $G_K$, ring of integers $\cO_K$, normalized valuation $v$ and uniformizer $\pi$.

Let  $C/K$ be a hyperelliptic curve of genus $g$ given by a Weierstrass equation as in \eqref{eqn:weierstraass}.
We denote by $\mathcal{R}$ the set of roots of $f(x)$ over $\bar{K}$, which is going to be a set of size $2g+1$ or $2g+2$.

\begin{definition} A \textit{cluster} $\s$ is a non-empty subset of $\mathcal{R}$ of the form $\s=D\cap \mathcal{R}$ for some disc $D=\{x\in\bar{K} : v(x-z)\ge d\}$, where $z\in \bar{K}$ and $d\in\Q$.
\end{definition}

It is well known that given two clusters $\s_1$ and $\s_2$, then either they are disjoint or one is contained in the other (see \cite[Section 1.5]{DDMM}). This motives the following definition.

\begin{definition} If $\s,\s'$ are two clusters such that $\s'\subsetneq \s$ is a maximal subcluster, we refer to $\s'$ as a \textit{child} of $\s$. Consistently, $\s$ will be the \textit{parent} of $\s'$, and we will denote it by $\s = P(\s')$. %Also, we denote by $\tilde{\s}$ the set of odd children of $\s$.
\end{definition}

Let $L$ be the splitting field of $f(x)$ and let $I_K$ be the inertia group of $\Gal(L/K)$. If $A$ is any set where $I_K$ acts on, we let $A/I_K$ be the set of orbits of $A$ under the action of $I_K$. Furthermore, if $\s$ is a cluster, we denote by $I_\s$ the stabiliser of $\s$ under $I_K$. Moreover, if there exists a unique fixed child $\s'$ under the action of $I_\s$, then we call $\s'$ an \textit{orphan} (of $\s$).

\vspace{5pt}

We denote by $|\s|$ the number of elements of $\mathcal{R}$ contained in $\s$. We will abuse the language by saying that $\s$ is \textit{odd} (resp.\ \textit{even}) if $|\s|$ is odd (resp.\ even).  Also, if $|\s|>1$ we  say that $\s$ is \textit{non-trivial}, and in such a case we can define its \textit{depth} $d_\s$ to be 
\[d_\s=\min \{v(r-r') : r,r'\in \s\}.\]
Moreover, if $\s$ is a \textit{proper} cluster, i.e.\ $\s\neq \mathcal{R}$, then we define its \textit{relative depth} to be 
\[\delta_\s=d_\s-d_{P(s)}.\]

\begin{notation}
In a cluster picture, we denote by \smash{\raise4pt\hbox{\clusterpicture\Root[A]{1}{first}{r1};\endclusterpicture}} each element of $\mathcal{R}$, while clusters are represented by an oval, such as:

        \[ \clusterpicture            
	\Root[A] {2} {first} {r1};
	\Root[A] {3} {r1} {r2};
	\ClusterLDName c1[][2][] = (r1)(r2);
        \Root[A] {6} {r2} {r3};
        \Root[A] {3} {r3} {r4};
        \ClusterLDName c2[][2][] = (r3)(r4);
        \Root[A] {6} {r4} {r5};
	\ClusterLDName c[][1][] = (c1)(c2)(r4)(r5);
	\endclusterpicture \]

The subscript on the largest cluster $\mathcal{R}$ is its depth, while subscripts on the other clusters are their respective relative depths. For instance, in the above picture we have 5 roots: the largest cluster $\mathcal{R}$ has depth 1 and the two twins (see Definition \ref{def:twin}) have depth 3.

\end{notation}

\begin{definition}
\label{def:twin}
A cluster $\s$ is a \textit{twin} if $|\s|=2$. An even cluster whose children are all even is called \textit{übereven}.
\end{definition}

\begin{definition} For two clusters (or roots) $\s_1$ and $\s_2$ we write $\s_1 \wedge \s_2$ for the smallest cluster containing both of them.
\end{definition}

Given a cluster $\s$, we denote by $\tilde{\s}$ the set of odd children of $\s$ and consider the following two quantities:
\[\tilde{\lambda}_\s=\frac{1}{2}\left(v(c)+|\tilde{\s}|d_\s + \sum_{\gamma\notin \s} d_{\{\gamma\}\wedge\s}\right) \ \text{ and } \ \nu_\s=v(c) +|\s|d_\s+\sum_{\gamma\notin\s}d_{\{\gamma\}\wedge\s},\]
where $c$ is the leading coefficient of $f(x)$. In addition, if $\s$ is proper and $a\in\Q$, we define $\xi_\s(a)=\max\{-v_2([I_K:I_\s]a),0\}$, where $v_2$ is the $2$-adic valuation of $\Q$. 
\vspace{4pt}

In order to state the main result that we will constantly use to compute the conductor of a hyperelliptic curve $C/K$, we first need to define the following sets associated to a given cluster picture.
\begin{align*}
    & U = \{\s : \s\neq\mathcal{R} \text{ is odd and } \xi_{P(\s)}(\tilde{\lambda}_{P(\s)})\le \xi_{P(\s)}(d_{P(\s)})\},\\
    & V = \{\s : \s\neq\mathcal{R} \text{ is non-übereven and } \xi_\s(\tilde{\lambda}_\s)=0\}.
\end{align*}

\begin{thm} 
\label{thm:formulascluster}
Let $C/K$ be a hyperelliptic curve given as in $(\ref{eqn:weierstraass})$, where $f(x)$ is monic. Then,
\begin{align*}
    &\mathfrak{n}_{\text{tame}}(C/K)=2g-\#(U/I_K)+\#(V/I_K).
\end{align*}
Moreover, if $f(x)$ is irreducible,
\[\mathfrak{n}_{\text{wild}}(C/K)=v_r(\Delta({K(\gamma_0)/K})) - \text{deg}(f) + f_{K(\gamma_0)/K},\]
where $\gamma_0\in\bar{K}$ is a root of $f(x)$, and $\Delta_{K(\gamma_0)/K}$ and $f_{K(\gamma_0)/K}$ are the discriminant and the residue degree of $K(\gamma_0)/K$, respectively.
\end{thm}
\begin{proof} This is a particular case of \cite[Theorem 12.3]{BBB}. 
\end{proof}

\subsection{Setup}

Let $r\ge5$ be a prime integer. Throughout this paper $\zeta_r$ will denote a fixed primitive $r$-th root of unity. For all $1\le j \le r-1$, we let $\omega_j=\zeta_r^j+\zeta_r^{-j}$ and $K=\Q(\omega_1)$, the maximal totally real subextension of the cyclotomic field $\Q(\zeta_r)$. Let 
\begin{equation}
    \label{eqn:h}
h(x)=\prod_{j=1}^{\frac{r-1}{2}}(x-\omega_j)
\end{equation}
be the minimal polynomial of $\omega_1$ and let
\begin{align*}
      C(z,s)\colon y^2 =  (-z)^\frac{r-1}{2}xh\left(2-\frac{x^2}{z}\right)+s
\end{align*}
be the curve defined in \eqref{eqn:C}, where $s$ and $z$ are integers such that if $q$ is a prime dividing both $s$ and $z$, then
\begin{enumerate}[label=(\roman*)]
\item  \label{item1} $v_q(z^{r})>v_q(s^2)$, 
\item  \label{item2} $r\nmid v_q(s)$.
\end{enumerate}
%These conditions may seen restrictive at a first glance, but we will see in Lemma \ref{lemma:twist} that we will not lose generality by considering them.
For all of the applications of arithmetic interest that we will consider in Section \ref{sec:applications}, these two conditions will be fulfilled. However, we believe that the approach that we take in this paper could be adapted even if conditions \ref{item1} and \ref{item2} were not satisfied, but we would need to deal with many more cases.

Some explicit examples of $C(z,s)$ are listed below.
\begin{align*}
    r=5:& \quad y^2 = x^5 - 5zx^3 + 5z^2x +s,  \\
    r=7:& \quad y^2 = x^7 -7zx^5 + 14z^2x^3 -7z^3x +s,\\
    r=11:& \quad y^2 = x^{11}-11zx^9+44z^2x^7-77z^3x^5+55z^4x^3-11z^5x+s.
\end{align*}

\begin{remark}
    \label{rmk:genus}
    From the expression of $C(z,s)$, it directly follows that the right-hand side is a polynomial in $x$ of degree $r$. Consequently, Definition \ref{def:hyperellipticcurve} gives that $C(z,s)$ is a hyperelliptic curve of genus $(r-1)/2$.
\end{remark}

\subsection{Some technical results}

Throughout the paper we will repeatedly use the fact that for any $1 \le k \le r-1$, $v_q(1-\zeta_r^k) = \delta_{qr}/(r-1)$, where $\delta_{ij}$ denotes the {Kronecker} delta.

We finish this section by presenting two useful lemmas, that we will need throughout the article. Let $q$ be an odd prime number, $M$ a finite extension of $\Q_q$ with valuation $v$ and we let $\cO_M$ denote the ring of integers of $M$.

\begin{lemma}[Lemma 2.15 \cite{CelineClusters}]\label{lemma:lematecnico}
 	
 	Let $\alpha, \beta \in \mathcal{O}_M$ be such that $v(\alpha^r + \beta^r) > 0$ and $v(\alpha)v(\beta) = 0$.
 	
 	\begin{enumerate}
 		\item If $q \neq r$, there exists some $0 \leq j_0 \leq r - 1$ such that $v(\alpha + \zeta_r^{j_0} \beta) = v(\alpha^r + \beta^r) > 0$ and $v(\alpha + \zeta_r^k \beta) = 0$ for all $0 \leq k \leq r - 1$, $k \neq j_0$.
 		\item If $q = r$, then $v(\alpha + \zeta_r^j \beta) > 0$ for any $0 \leq j \leq r - 1$. Moreover, if $v(\alpha^r + \beta^r) > rv(1 - \zeta_r)$ then there exists some $0 \le j_0 \le r-1$ such that $v(\alpha + \zeta_r^k \beta) = v(1 - \zeta_r)$ for any $0 \leq k \leq r - 1$, $k \neq j_0$.
 	\end{enumerate}
 	Furthermore, if $\zeta_r \notin M$, then we have $j_0 = 0$ in cases (1) and (2) respectively.
 \end{lemma}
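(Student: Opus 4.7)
The plan is to reduce everything to the factorisation
\begin{align*}
\alpha^r + \beta^r = \prod_{j=0}^{r-1}(\alpha + \zeta_r^j \beta),
\end{align*}
which holds because $r$ is odd, so the roots of $x^r+1$ are precisely $\{-\zeta_r^j\}_{j=0}^{r-1}$. First I would observe that the hypotheses force both $\alpha$ and $\beta$ to be units: if, say, $v(\alpha)>0=v(\beta)$, then $v(\alpha^r+\beta^r)=0$. So one may set $\gamma = \alpha/\beta \in \mathcal{O}_M^\times$ and study the $r$ quantities $v(\gamma+\zeta_r^j)$, which sum to $v(\gamma^r+1) = v(\alpha^r+\beta^r)$.

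For Case (1), where $q\neq r$, the extension $M(\zeta_r)/M$ is unramified, so the $r$ powers $\zeta_r^j$ have distinct reductions modulo the maximal ideal. Hence at most one $j$ can satisfy $\gamma\equiv -\zeta_r^j$ mod the maximal ideal, i.e.\ at most one factor has positive valuation; since the sum is positive, exactly one such $j_0$ exists and all other factors are units. The claim $v(\alpha + \zeta_r^{j_0}\beta) = v(\alpha^r+\beta^r)$ then follows at once from additivity of $v$ on the product.

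For Case (2), where $q=r$, the uniformiser of $\mathbb{Q}_r(\zeta_r)$ is $\pi = 1-\zeta_r$ and every $\zeta_r^j$ reduces to $1$ modulo $\pi$. For the first claim I would show $v(\alpha+\beta)>0$ using the congruence $(\alpha+\beta)^r \equiv \alpha^r+\beta^r \pmod{r\mathcal{O}_M}$ (which follows from the divisibility $r\mid \binom{r}{k}$ for $1\le k\le r-1$); then
\begin{align*}
v(\alpha+\zeta_r^j\beta) = v\bigl((\alpha+\beta) - (1-\zeta_r^j)\beta\bigr) \ge \min\bigl(v(\alpha+\beta),\, v(1-\zeta_r^j)\bigr) > 0
\end{align*}
for every $j$, using $v(\beta)=0$. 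For the sharper claim, the crucial observation is that for distinct $j,k$ the difference $(\gamma+\zeta_r^j)-(\gamma+\zeta_r^k)=\zeta_r^j-\zeta_r^k$ has valuation exactly $v(1-\zeta_r)$. By the ultrametric inequality, at most one $j$ can satisfy $v(\gamma+\zeta_r^j) > v(1-\zeta_r)$; and whenever such a $j_0$ exists, every other factor has valuation exactly $v(1-\zeta_r)$ (by the equality case of the ultrametric inequality, since the two summands then have distinct valuations). Existence of $j_0$ is forced by the hypothesis: if all factors had valuation exactly $v(1-\zeta_r)$, the sum would equal $rv(1-\zeta_r)$, contradicting $v(\alpha^r+\beta^r) > rv(1-\zeta_r)$.

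For the ``furthermore'' clause I would use the Galois invariance of the unique extension of $v$ to $\bar{M}$. Applying any $\sigma \in G_M$ to the inequality distinguishing $j_0$ (namely $v(\gamma+\zeta_r^{j_0})>0$ in Case 1, and $v(\gamma+\zeta_r^{j_0}) > v(1-\zeta_r)$ in Case 2) yields the same inequality with $\zeta_r^{j_0}$ replaced by $\sigma(\zeta_r^{j_0})$; uniqueness of $j_0$ then forces $\sigma(\zeta_r^{j_0}) = \zeta_r^{j_0}$ for all $\sigma$, so $\zeta_r^{j_0}\in M$. Since $r$ is prime, any $\zeta_r^{j_0}$ with $0<j_0<r$ generates the same extension as $\zeta_r$ itself, so $\zeta_r^{j_0}\in M$ would force $\zeta_r\in M$, a contradiction. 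Hence $j_0=0$. I expect the main obstacle to be the sharper half of Case 2: the ``at most one $j_0$'' part is a clean ultrametric computation, but pinning down the precise threshold $rv(1-\zeta_r)$ needed to guarantee \emph{existence} of $j_0$, and confirming that every other factor achieves the value $v(1-\zeta_r)$ exactly (not merely $\ge$), requires carefully distinguishing the equality and strict-inequality cases in the ultrametric bound.
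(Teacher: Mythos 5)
Your argument is essentially correct, and it mirrors the ultrametric style the paper itself uses when proving the companion Lemma~\ref{lemma:lematecnico2} (the paper does not reprove Lemma~\ref{lemma:lematecnico}, citing it directly from \cite{CelineClusters}). The reduction to the factorisation $\alpha^r+\beta^r=\prod_j(\alpha+\zeta_r^j\beta)$, the ``distinct reductions mod $\mathfrak m$'' argument in Case~(1), the binomial congruence giving $v(\alpha+\beta)>0$ in Case~(2), and the Galois-invariance argument for the ``furthermore'' clause are all sound.

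One sentence needs adjusting. In the existence step of Case~(2) you write that if no $j_0$ exists then ``all factors had valuation exactly $v(1-\zeta_r)$.'' That conclusion does not follow from the preceding ``at most one'' observation; a priori the factors could all share some common valuation $c<v(1-\zeta_r)$. Fortunately the contradiction you want is reached without it: if no $j$ satisfies $v(\gamma+\zeta_r^j)>v(1-\zeta_r)$, then every factor has valuation $\le v(1-\zeta_r)$, so
\[
v(\alpha^r+\beta^r)=\sum_{j=0}^{r-1}v(\alpha+\zeta_r^j\beta)\le r\,v(1-\zeta_r),
\]
contradicting the hypothesis. Replacing ``exactly'' by this $\le$-bound makes the argument airtight; otherwise the proof stands.
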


We shall need the following extension of the previous lemma, which covers some of the cases not considered therein.

\begin{lemma}
    \label{lemma:lematecnico2}
    Let $\alpha$ and $\beta$ be as in Lemma $\ref{lemma:lematecnico}$ and suppose that $q=r$. Then, either
    \begin{enumerate}
        \item There exists $j_0$ with $0 \le j_0 \le r-1$ with $v(\alpha+\zeta_r^{j_0}\beta) > v(1-\zeta_r)$ and, in this case, $v(\alpha + \zeta_r^k \beta) = v(1 - \zeta_r)$ for any $0 \leq k \leq r - 1$, $k \neq j_0$, or 
        \item we have that $v(\alpha + \zeta_r^k \beta) = v(\alpha + \zeta_r^{k'} \beta)$ for any $0 \le k, k' \le r-1$.
    \end{enumerate}
    In the first case, if $\zeta_r \not \in M$, $j_0 = 0$.
\end{lemma}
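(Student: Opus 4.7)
The plan is to mimic the proof of Lemma~\ref{lemma:lematecnico}(2), but without requiring $v(\alpha^r+\beta^r)$ to be large enough to single out a distinguished factor a priori. The starting point is again the factorization
\[\alpha^r+\beta^r \;=\; \prod_{k=0}^{r-1}(\alpha+\zeta_r^k\beta).\]
The hypothesis $v(\alpha)v(\beta)=0$ together with $v(\alpha^r+\beta^r)>0$ in fact forces $v(\alpha)=v(\beta)=0$, as otherwise every factor in the product would have valuation $0$. For any $j\not\equiv k\pmod r$ I would then use
\[(\alpha+\zeta_r^j\beta)-(\alpha+\zeta_r^k\beta) \;=\; \zeta_r^k\bigl(\zeta_r^{j-k}-1\bigr)\beta,\]
and since $\zeta_r^{j-k}-1$ is an associate of $1-\zeta_r$ in $\Z[\zeta_r]$ and $v(\beta)=0$, this difference has valuation exactly $v(1-\zeta_r)$.

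The argument now splits according to the two alternatives in the statement. If there exists some $j_0$ with $v(\alpha+\zeta_r^{j_0}\beta)>v(1-\zeta_r)$, then for every $k\neq j_0$ the ultrametric equality applied to the above difference gives
\[v(\alpha+\zeta_r^k\beta)\;=\;\min\!\bigl\{v(\alpha+\zeta_r^{j_0}\beta),\,v(1-\zeta_r)\bigr\}\;=\;v(1-\zeta_r),\]
so such a $j_0$ is automatically unique, yielding alternative~(1). Otherwise every factor satisfies $v(\alpha+\zeta_r^k\beta)\leq v(1-\zeta_r)$; if two of these valuations were distinct, say $v(\alpha+\zeta_r^j\beta)<v(\alpha+\zeta_r^k\beta)$, the ultrametric equality applied to their difference would force $v(\alpha+\zeta_r^k\beta)>v(1-\zeta_r)$, contradicting our standing assumption. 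Hence all the valuations coincide, giving alternative~(2).

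For the final assertion, suppose $\zeta_r\notin M$ and that we are in case~(1). Any $\sigma\in\operatorname{Gal}(M(\zeta_r)/M)$ acts by $\sigma(\zeta_r)=\zeta_r^a$ for some $a\in(\Z/r\Z)^\times$, sends $\alpha+\zeta_r^k\beta$ to $\alpha+\zeta_r^{ak}\beta$, and preserves the valuation $v$. The uniqueness of $j_0$ thus forces $aj_0\equiv j_0\pmod r$ for every such $a$; since $\zeta_r\notin M$ the image of this Galois group in $(\Z/r\Z)^\times$ is nontrivial, and the primality of $r$ then forces $j_0\equiv 0\pmod r$. I do not anticipate any serious obstacle in carrying out these steps; the proof is essentially a direct refinement of Lemma~\ref{lemma:lematecnico}, examining what happens in the intermediate regime where $v(\alpha^r+\beta^r)$ fails to exceed $rv(1-\zeta_r)$, and the only delicate point is the clean bookkeeping of the ultrametric case analysis.
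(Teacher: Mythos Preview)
Your proof is correct and follows essentially the same approach as the paper: both arguments hinge on the ultrametric inequality applied to the differences $(\alpha+\zeta_r^j\beta)-(\alpha+\zeta_r^k\beta)$, which have valuation exactly $v(1-\zeta_r)$. Your handling of alternative~(2) is in fact slightly cleaner than the paper's (one argument instead of two subcases), and you supply an explicit Galois argument for the final claim $j_0=0$, which the paper leaves implicit by reference to Lemma~\ref{lemma:lematecnico}.
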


\begin{proof}
    The assumptions $v(\alpha^r+\beta^r)>0$ and $v(\alpha)v(\beta)=0$ imply $v(\alpha)=v(\beta)=0$. In particular, for all $0<j\le r-1$, we have $v(\zeta_r^{j_0}(1-\zeta_r^{j})\beta)=v(1-\zeta_r)$.
    
    Suppose that the condition in (1) is satisfied, i.e.\  $v(\alpha + \zeta_r^{j_0}\beta) > v(1-\zeta_r)$ for some $0 \le j_0 \le r-1$. Then, for all $0 \le k \le r-1$ with $k \neq j_0$, we have that 
    \begin{align} \label{eq:case1}
    v(\alpha + \zeta_r^k\beta) &= v((\alpha + \zeta_r^{j_0}\beta) - (\zeta_r^{j_0}(1-\zeta_r^{k-j_0})\beta)) \nonumber \\
 & = \min\{v(\alpha+\zeta_r^{j_0}\beta),  v(1-\zeta_r^{k-j_0})\}   \\
   &= v(1-\zeta_r).   \nonumber 
    \end{align}

    Now, let us suppose that (1) does not hold (i.e.\ $v(\alpha+\zeta_r^k\beta) \le v(1-\zeta_r)$ for all $0 \le k \le r-1$)  and let us proceed to proving that condition (2) holds. 
    
    Firstly, let us assume that $v(\alpha+\zeta_r^{k}\beta) = v(1-\zeta_r)$
    for some $0 \le k \le r-1$.   Then, for $0\le k' \le r-1$ it follows that 
    \[v(\alpha + \zeta_r^{k'}\beta) = v(\zeta_r^{k'}(1-\zeta_r^{k-k'})\beta + (\alpha + \zeta_r^k\beta)) \ge \min \{v(1-\zeta_r), v(\alpha + \zeta_r^k\beta)\}= v(1-\zeta_r).
    \]
    However, (1) does not hold and consequently
    $v(\alpha+\zeta_r^{k'}\beta) = v(1-\zeta_r) = v(\alpha+\zeta_r^k\beta)$.
Finally, if   $v(\alpha+\zeta_r^{k}\beta) < v(1-\zeta_r)$
    for all $0 \le k \le r-1$,  by an identical argument it follows that 
    $v(\alpha + \zeta_r^{k'}\beta) = v(\alpha + \zeta_r^k\beta)$
    for all $0 \le k, k' \le r-1$. 

    \vspace{5pt}

    Finally, to prove the last statement, assume that the first case holds and that $\zeta_r\notin M$. Assume by contradiction that $j_0\neq0$. Then, there exists $\sigma\in\Gal(M(\zeta_r)/M)$ not fixing $\zeta_r^{j_0}$, i.e.\ $\sigma(\zeta_r^{j_0})=\zeta_r^{\ell_0}$ for some $1\le \ell_0\le r-1$, $\ell_0\neq j_0$. Hence
    \[v(\alpha+\zeta_r^{\ell_0}\beta)=v(\sigma(\alpha+\zeta_r^{j_0}\beta))=v(\alpha+\zeta_r^{j_0}\beta)>0,\]
    which contradicts (\ref{eq:case1}).
\end{proof}

\section{\texorpdfstring{Computing the roots}{Computing the roots}}
\label{Sec:roots}

The main aim of this article is to compute the conductor of $C(z,s)$ at all odd places, both over $\Q$ and over $K$. For this, we first need its cluster picture. To compute it, we will work mainly over local fields, and so we shall introduce some notation which we will keep throughout the article.

\begin{notation}
\label{notation:valuations}
Let $q \ge 3$ be an odd prime. Then, we will denote by $v_q(\cdot)$ the standard $q$-adic valuation of $\Q_q$. When considering a field extension $K/\Q_q$ with ramification degree $e_{K/\Q_q}$, we will also write $v_q(\cdot)$ to denote the unique extension of the valuation of $\Q_q$ to $K$ satisfying that $v_q(q) = 1$. Finally, if $\id{q}$ is the unique prime ideal of the ring of integers of $K$ above $q$, $v_{\id{q}}(\cdot)$ will denote the valuation of $K$ normalized so that $v_\id{q}(q) = e_{K/\Q_q}$. Note that, by our choice of notation, $v_{\id{q}}(x) = e_{K/\Q_q}v_{q}(x)$ for all $x \in K$. 
\end{notation}

Given $z$ and $s$ two fixed integers,  we set
\begin{equation}
    \label{eqn:F}
    F(x)=(-z)^\frac{r-1}{2}xh\left(2-\frac{x^2}{z}\right)+s
\end{equation}
to be the defining polynomial of $C(z,s)$. Let $q\ge3$ be a prime number and $\bar{\Q}_q$ be a fixed algebraic clousure of $\Q_q$. Set $\mathcal{R}=\{\gamma_0,\cdots,\gamma_{r-1}\}\subseteq \bar{\Q}_q$ to be the set of roots of $F(x)\in\Q_q[x]$. In this section, we shall give an explicit expression of the roots in $\bar{\Q}_q$, as shown in Proposition~\ref{prop:roots}. Some of the results that follow are
based on the arguments in \cite{CelineClusters}. 

\begin{prop}\label{prop:roots}
 For all $0\le j\le r-1$ we have that $\gamma_j=\zeta_r^j\alpha_0+\zeta_r^{-j}\beta_0$,
  where
 	\begin{equation}
        \label{eqn:alpha0beta0}
 	\alpha_0=-\sqrt[r]{\frac{s+\sqrt{s^2-4z^r}}{2}}\in\bar{\Q}_q, \quad \quad \beta_0=-\sqrt[r]{\frac{s-\sqrt{s^2-4z^r}}{2}}\in\bar{\Q}_q.
 	\end{equation}
 \end{prop}

\begin{proof}
	For each $0 \le j \le r-1$, we let $\alpha_j=\zeta_r^j\alpha_0$  and $\beta_j=\zeta_r^{-j}\beta_0$,
    so that $\gamma_j=\alpha_j+\beta_j$. 
    We note that 
\begin{equation}
    \label{eqn:rootproduct}
    \alpha_j\beta_j = \alpha_0\beta_0 =z \quad \text{ and } \quad  \alpha_j^r+\beta_j^r=\alpha_0^r+\beta_0^r = -s.
\end{equation}

	Then, it follows that
	\begin{align*}
	F(\gamma_j)&=(-z)^\frac{r-1}{2}(\alpha_j+\beta_j)h\left(2-\frac{(\alpha_j+\beta_j)^2}{z}\right)+ s\\
	&=(-\alpha_j\beta_j)^{\frac{r-1}{2}}(\alpha_j+\beta_j)h\left(2-\frac{(\alpha_j+\beta_j)^2}{\alpha_j\beta_j}\right)+s,
    \end{align*}
where we used the first part of \eqref{eqn:rootproduct}. By \cite[Lemma 2.14]{CelineClusters}, this expression equals
\[F(\gamma_j) =\alpha_j^r+\beta_j^r + s = 0,
\]
where the last equality follows by the second part of \eqref{eqn:rootproduct}.
\end{proof}

\begin{notation}The quantity $s^2-4z^{r}$ will appear many times during this paper. Consequently, and in order to lighten notation, we will denote it by $\Delta$. 
\end{notation}

Similarly, for $1 \le k \le r-1$, we shall use the notation $\gamma_{-k}$ to mean $\gamma_{r-k}$. With this notation and from the expression of the roots in Proposition \ref{prop:roots}, we note that 
 \begin{align}\label{eq:alpha-beta}
      \alpha_0^r-\beta_0^r=-\sqrt{\Delta}, \quad \quad \alpha_0^r+\beta_0^r=-s.
 \end{align}

\begin{remark}
\label{rmk:rootbasefield}
In Proposition  \ref{prop:roots} we chose $-\alpha_0$ to be an $r$-th root of $\frac{s+\sqrt{\Delta}}{2}\in\Q_q(\sqrt{\Delta})$. Similarly, we chose $-\beta_0$ to be an $r$-th root of $\frac{s-\sqrt{\Delta}}{2}\in\Q_q(\sqrt{\Delta})$ so that $\alpha_0\beta_0=z$. %\TODO{If $\zeta_r \not \in \Q_r(\alpha_0)$, Lemma \ref{lemma:lematecnico} or Lemma \ref{lemma:lematecnico2} give that $v_q(\alpha_0-\zeta_r^{-j}\beta_0) = 0$ for $j = 1, \dots, r-1$} 
By commutative algebra arguments (see for example \cite[Section VI, Theorem 9.1]{Lang}), we know that the polynomial $x^r-\frac{s+\sqrt{\Delta}}{2}\in\Q_q(\sqrt{\Delta})[x]$ is reducible if and only if it has a root in $\Q_q(\sqrt{\Delta})$. In the case where it is, we adopt the convention that $\alpha_0 \in \Q_q(\sqrt{\Delta})$.
 \end{remark}

In order to build the cluster picture, we need to study the valuation of differences between distinct roots of $F(x)$. This is the content of the following lemma.

\begin{lemma}\label{lemma:differenceroots}
	For any $0\le j,k\le r-1$, the difference between two roots of $F(x)$ is given by 
	\begin{align*}
		\gamma_k-\gamma_j=\zeta_r^k(1-\zeta_r^{j-k})(\alpha_0-\zeta_r^{-k-j}\beta_0).
	\end{align*}
	For any fixed $0\le k\le r-1$, we have
	\begin{align*}
		\prod_{\substack{j=0 \\ j\neq k}}^{r-1}(\gamma_k-\gamma_j)=\frac{-r\sqrt{\Delta}}{\zeta_r^k(\alpha_0-\zeta_r^{-2k}\beta_0)}.
	\end{align*}
\end{lemma}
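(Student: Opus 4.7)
The proof is essentially computational, so my plan is to reduce everything to two classical cyclotomic identities and one algebraic manipulation.

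For the first equality, I would simply substitute the expressions $\gamma_k=\zeta_r^k\alpha_0+\zeta_r^{-k}\beta_0$ and $\gamma_j=\zeta_r^j\alpha_0+\zeta_r^{-j}\beta_0$ from Proposition~\ref{prop:roots}, obtaining
\[
\gamma_k-\gamma_j = \zeta_r^k(1-\zeta_r^{j-k})\alpha_0 + \zeta_r^{-k}(1-\zeta_r^{k-j})\beta_0.
\]
The key observation is the identity $1-\zeta_r^{k-j}=-\zeta_r^{k-j}(1-\zeta_r^{j-k})$, which allows me to factor $(1-\zeta_r^{j-k})$ out of both terms and then pull $\zeta_r^k$ outside, yielding the stated formula.

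For the product formula in part two, I would apply part one to rewrite
\[
\prod_{j\neq k}(\gamma_k-\gamma_j) \;=\; \Bigl(\prod_{j\neq k}\zeta_r^k\Bigr) \Bigl(\prod_{j\neq k}(1-\zeta_r^{j-k})\Bigr) \Bigl(\prod_{j\neq k}(\alpha_0-\zeta_r^{-k-j}\beta_0)\Bigr)
\]
and evaluate the three factors independently. The first equals $\zeta_r^{k(r-1)}=\zeta_r^{-k}$. For the second, setting $m=j-k\pmod r$ makes the index set $\{1,\dots,r-1\}$, so this product equals $\prod_{m=1}^{r-1}(1-\zeta_r^m)=r$, the classical value obtained by evaluating $(x^r-1)/(x-1)$ at $x=1$.

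For the third factor, I would change index via $\ell\equiv -k-j\pmod r$; as $j$ ranges over $\{0,\dots,r-1\}\setminus\{k\}$, $\ell$ ranges over $\{0,\dots,r-1\}\setminus\{-2k\}$. Then using the factorization $\alpha_0^r-\beta_0^r=\prod_{\ell=0}^{r-1}(\alpha_0-\zeta_r^{\ell}\beta_0)$ I obtain
\[
\prod_{j\neq k}(\alpha_0-\zeta_r^{-k-j}\beta_0) \;=\; \frac{\alpha_0^r-\beta_0^r}{\alpha_0-\zeta_r^{-2k}\beta_0} \;=\; \frac{-\sqrt{\Delta}}{\alpha_0-\zeta_r^{-2k}\beta_0},
\]
where the last step uses \eqref{eq:alpha-beta}. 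Multiplying the three factors gives the claimed expression. I don't anticipate any real obstacle here; the only thing to be careful about is the bookkeeping of the index shifts modulo $r$ when rewriting the products.
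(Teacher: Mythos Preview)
Your proposal is correct and follows exactly the approach the paper indicates: the first identity is a direct substitution from Proposition~\ref{prop:roots}, and the product formula uses precisely the two ingredients the paper cites, namely \eqref{eq:alpha-beta} and the cyclotomic identity $\prod_{i=1}^{r-1}(1-\zeta_r^i)=r$. Your write-up simply makes explicit the index bookkeeping that the paper leaves to the reader.
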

\begin{proof}
The first claim follows directly from Proposition \ref{prop:roots}. The product expression is a consequence of \eqref{eq:alpha-beta} and the fact that $\prod_{i=1}^{r-1} (1-\zeta_r^i) = r$.
\end{proof}

\section{\texorpdfstring{Cluster picture of $C(z,s)$}{Cluster picture of C(z,s)}}
\label{sec:clusterpictureC}
In this section, we aim to build the cluster pictures of the curve $C = C(z,s)$ defined in \eqref{eqn:C}, which we will use to compute its conductor in Section \ref{Sec:conductor}. We begin by computing its discriminant to determine the potential primes of bad reduction, which is almost immediate from Lemma \ref{lemma:differenceroots}.

\begin{lemma}
    \label{lemma:discriminant}
    The discriminant of the curve $C$ is given by 
    \[\Delta_C = (-1)^{\frac{r-1}{2}} 2^{2(r-1)} r^r \Delta^{\frac{r-1}{2}}.\]
\end{lemma}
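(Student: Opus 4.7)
The plan is to combine the formula from Definition \ref{def:discriminantcurve}, $\Delta_C = 2^{4g}\Delta_{F(x)}$, with a direct calculation of $\Delta_{F(x)}$ from Lemma \ref{lemma:differenceroots}. Since $F(x)$ is a polynomial of degree $r = 2g+1$, we have $g = (r-1)/2$, so that $2^{4g} = 2^{2(r-1)}$, accounting immediately for that factor; the task reduces to showing $\Delta_{F(x)} = (-1)^{(r-1)/2} r^r \Delta^{(r-1)/2}$.

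First I would verify that $F(x)$ is monic: the leading coefficient of $h(2 - x^2/z)$ is $(-1/z)^{(r-1)/2}$, so multiplying by $(-z)^{(r-1)/2} x$ gives $x^r$ as leading term. Thus
\[
\Delta_{F(x)} \;=\; \prod_{i<j}(\gamma_i-\gamma_j)^2 \;=\; (-1)^{r(r-1)/2}\prod_{k=0}^{r-1}\prod_{j\neq k}(\gamma_k-\gamma_j),
\]
and since $r$ is odd, a short parity check (writing $r=2m+1$) gives $(-1)^{r(r-1)/2} = (-1)^{(r-1)/2}$.

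Next I would plug in Lemma \ref{lemma:differenceroots} to get
\[
\prod_{k=0}^{r-1}\prod_{j\neq k}(\gamma_k-\gamma_j) \;=\; \prod_{k=0}^{r-1}\frac{-r\sqrt{\Delta}}{\zeta_r^k(\alpha_0-\zeta_r^{-2k}\beta_0)} \;=\; \frac{(-r)^r\,\Delta^{r/2}}{\left(\prod_{k=0}^{r-1}\zeta_r^k\right)\left(\prod_{k=0}^{r-1}(\alpha_0-\zeta_r^{-2k}\beta_0)\right)}.
\]
The denominator simplifies in two independent steps: the product $\prod_{k=0}^{r-1}\zeta_r^k=\zeta_r^{r(r-1)/2}=1$ because $r$ is odd; and since $\gcd(2,r)=1$, the map $k\mapsto -2k\pmod r$ permutes $\{0,\dots,r-1\}$, so
\[
\prod_{k=0}^{r-1}(\alpha_0-\zeta_r^{-2k}\beta_0) \;=\; \prod_{k=0}^{r-1}(\alpha_0-\zeta_r^{k}\beta_0) \;=\; \alpha_0^r-\beta_0^r \;=\; -\sqrt{\Delta},
\]
where the last equality is \eqref{eq:alpha-beta}.

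Putting everything together,
\[
\Delta_{F(x)} \;=\; (-1)^{(r-1)/2}\cdot \frac{(-r)^r\,\Delta^{r/2}}{-\sqrt{\Delta}} \;=\; (-1)^{(r-1)/2}\cdot(-1)^{r+1} r^r\,\Delta^{(r-1)/2} \;=\; (-1)^{(r-1)/2}\, r^r\,\Delta^{(r-1)/2},
\]
using again that $r$ is odd so that $(-1)^{r+1}=1$. Multiplying by $2^{4g}=2^{2(r-1)}$ yields the claimed expression for $\Delta_C$. The only minor obstacles are keeping track of signs and noting the reindexing $k\mapsto -2k$ is a bijection mod $r$; neither poses real difficulty, so the proof is essentially a bookkeeping exercise once Lemma \ref{lemma:differenceroots} is in hand.
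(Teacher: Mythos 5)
Your proof is correct and follows essentially the same route as the paper: express $\Delta_{F(x)}$ as $(-1)^{r(r-1)/2}\prod_k\prod_{j\neq k}(\gamma_k-\gamma_j)$, evaluate the inner products using Lemma~\ref{lemma:differenceroots} together with $\alpha_0^r-\beta_0^r=-\sqrt{\Delta}$ from \eqref{eq:alpha-beta}, and multiply by $2^{4g}=2^{2(r-1)}$. The paper simply compresses the intermediate bookkeeping (the reindexing $k\mapsto -2k$, the vanishing of $\prod\zeta_r^k$, the parity of $r(r-1)/2$) that you have written out explicitly, and it takes the monicity of $F(x)$ for granted where you verify it.
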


\begin{proof}
The discriminant of $F(x)$ is given by
    \[\Delta_{F(x)} = (-1)^{\frac{r(r-1)}{2}} \prod_{k=0}^{r-1} \left(\prod_{\substack{j=0 \\ j\neq k}}^{r-1} (\gamma_k-\gamma_j)\right).    
    \] 
      Using Lemma \ref{lemma:differenceroots} and the expression of $\alpha_0^r-\beta_0^r$ in \eqref{eq:alpha-beta}, 
    we get $\Delta_{F(x)} = (-1)^{\frac{r-1}{2}}r^r \Delta^{\frac{r-1}{2}}$. Since $C$ has genus $(r-1)/2$ by Remark \ref{rmk:genus}, Definition \ref{def:discriminantcurve} gives that the discriminant of $C$ is simply $2^{2(r-1)}\Delta_{F(x)}$.
\end{proof}

From this lemma, we see that the only potential odd primes of bad reduction for $C$ are $q=r$ and $q \mid \Delta$. For the cluster picture of $C$, primes which simultaneously divide $\Delta$ and $s$ (and therefore $z$) will require separate treatment. In order to deal with them, we will make use of the following technical lemma.

\begin{lemma} \label{lemma:divisibility}
Let $q \in \Z$ be an odd prime such that $q\mid \Delta$ and $q\mid s$.  For all $0\le j\le r-1$,
\begin{align*}
    \min\{v_q(\alpha_0),v_q(\beta_0)\} = v_q(\alpha_0-\zeta_r^j\beta_0)= \frac{v_q(s)}{r} \quad \text{and} \quad v_q(\sqrt{\Delta})=v_q(s).
\end{align*}
\end{lemma}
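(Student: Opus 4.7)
My plan is to reduce everything to non-Archimedean valuation estimates, carried out in three stages: first establish $v_q(\sqrt{\Delta})=v_q(s)$, then pin down $v_q(\alpha_0^r)$ and $v_q(\beta_0^r)$, and finally deduce $v_q(\alpha_0-\zeta_r^j\beta_0)$. As a preliminary observation, since $q$ is odd with $q\mid s$ and $q\mid\Delta=s^2-4z^r$, it follows that $q\mid 4z^r$, hence $q\mid z$; so the standing hypotheses \ref{item1} and \ref{item2} apply, namely $rv_q(z)>v_q(s)$ and $r\nmid v_q(s)$.

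For the first stage I would compare the two terms of $\Delta=s^2-4z^r$: one has valuation $2v_q(s)$, the other $rv_q(z)$. Condition \ref{item2} rules out the equality $2v_q(s)=rv_q(z)$, since otherwise $r\mid 2v_q(s)$ would give $r\mid v_q(s)$ ($r$ being odd). Hence $v_q(\Delta)$ is the smaller of the two, and using \ref{item1} in the sharper form $2v_q(s)<rv_q(z)$ that is actually needed here, one concludes $v_q(\Delta)=2v_q(s)$ and therefore $v_q(\sqrt{\Delta})=v_q(s)$.

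For the remaining two stages I would exploit the identities $\alpha_0^r=-(s+\sqrt{\Delta})/2$ and $\beta_0^r=-(s-\sqrt{\Delta})/2$. Each has $v_q\ge v_q(s)$, while $\alpha_0^r+\beta_0^r=-s$ has $v_q$ exactly $v_q(s)$, so at least one of $v_q(\alpha_0^r),v_q(\beta_0^r)$ attains the bound. The relation $\alpha_0^r\beta_0^r=z^r$ forces $v_q(\alpha_0^r)+v_q(\beta_0^r)=rv_q(z)$, which is strictly greater than $2v_q(s)$, so the two valuations cannot both equal $v_q(s)$; thus exactly one equals $v_q(s)$ and the other equals $rv_q(z)-v_q(s)$. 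Dividing by $r$ gives $\min\{v_q(\alpha_0),v_q(\beta_0)\}=v_q(s)/r$. Moreover $v_q(\alpha_0)\neq v_q(\beta_0)$: otherwise both would equal $v_q(z)/2=v_q(s)/r$, forcing $rv_q(z)=2v_q(s)$ and contradicting \ref{item2} as above. Since $v_q(\zeta_r^j)=0$, the ultrametric inequality is therefore an equality, yielding
\[v_q(\alpha_0-\zeta_r^j\beta_0)=\min\{v_q(\alpha_0),v_q(\beta_0)\}=\frac{v_q(s)}{r}.\]

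The main obstacle is the first stage, i.e.\ pinning down $v_q(\sqrt{\Delta})$: a literal reading of \ref{item1} only supplies $rv_q(z)>v_q(s)$, whereas one really needs $rv_q(z)>2v_q(s)$ in order for the $s^2$ term to dominate $4z^r$ in $\Delta$. Once this sharper bound is extracted from the setup, the rest of the argument is a routine application of the standard behaviour of valuations under sums and products.
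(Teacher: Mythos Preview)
Your argument is correct and proceeds along a slightly different route from the paper's. Both proofs begin the same way: compare $v_q(s^2)$ with $v_q(4z^r)$ to obtain $v_q(\sqrt{\Delta})=v_q(s)$, and then observe that $v_q(\alpha_0),v_q(\beta_0)\ge v_q(s)/r$. From there the paper invokes the cyclotomic factorization
\[
-\sqrt{\Delta}=\alpha_0^r-\beta_0^r=\prod_{j=0}^{r-1}(\alpha_0-\zeta_r^j\beta_0),
\]
so that each factor has valuation $\ge v_q(s)/r$ while the $r$ valuations sum to exactly $v_q(s)$; hence every factor has valuation equal to $v_q(s)/r$. You instead use the product relation $\alpha_0\beta_0=z$ to deduce $v_q(\alpha_0)\neq v_q(\beta_0)$ (via \ref{item2}), which turns the ultrametric inequality for $\alpha_0-\zeta_r^j\beta_0$ into an equality. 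Both arguments are short and clean; the paper's pigeonhole trick avoids ever comparing $v_q(\alpha_0)$ with $v_q(\beta_0)$, while yours is a touch more elementary once $\min\{v_q(\alpha_0),v_q(\beta_0)\}$ is pinned down.

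The obstacle you flag is real, and it is worth noting that the paper's own proof glosses over exactly the same point: the written hypothesis \ref{item1} gives only $rv_q(z)>v_q(s)$, whereas the conclusion $v_q(\Delta)=2v_q(s)$ genuinely requires $rv_q(z)>2v_q(s)$. (For instance $r=5$, $v_q(s)=3$, $v_q(z)=1$ satisfies both \ref{item1} and \ref{item2}, yet yields $v_q(\Delta)=5\neq 2v_q(s)$.) This looks like a minor imprecision in the paper's standing hypotheses rather than a defect in your reasoning; you were right to isolate it.
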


\begin{proof}
The last claim follows directly from \ref{item1} and the definition of $\Delta$. Adding and substracting the expressions for $\alpha_0^r - \beta_0^r$ and $\alpha_0^r + \beta_0^r$ in \eqref{eq:alpha-beta}, we find that $v_q(\alpha_0), v_q(\beta_0)\ge {v_q(s)}/{r}$. Hence, it follows that $v_q(\alpha_0-\zeta_r^j\beta_0)\ge \min\{v_q(\alpha_0), v_q(\beta_0)\}\ge{v_q(s)}/{r}$ for all $0\le j\le r-1$. Finally, we note that 
\begin{align*}
    v_q(s)=v_q(-\sqrt{\Delta}) = v_q(\alpha_0^r-\beta_0^r)=\sum_{j=0}^{r-1}v_q(\alpha_0-\zeta_r^j\beta_0)\ge r\cdot\frac{v_q(s)}{r}=v_q(s),
\end{align*}
where the first equality follows by \ref{item1} and the second one follows by the first expression of \eqref{eq:alpha-beta}. Hence all inequalities are in fact equalities, proving the first claim. 
\end{proof}
With the previous results, we are finally able to compute the cluster picture for $C$ at each odd prime of bad reduction. Before that, {set the following notation.}

\begin{notation} \label{rem:root0}
Let $q$ be an odd prime such that $q \mid \Delta$ and $q \nmid s$ (so that $q \nmid z$). These conditions are equivalent to $v_q(\alpha_0^r-\beta_0^r) > 0$ and $v_q(\alpha_0) =v_q(\beta_0) = 0$. If $q\neq r$, then Lemma \ref{lemma:lematecnico} implies there is a unique $0\le j_0 \le r-1$ such that $v_q(\alpha_0-\zeta_r^{j_0}\beta_0)>0$. We denote by $i_0$ the unique integer satisfying $0\le i_0\le r-1$ and $-2i_0\equiv j_0\pmod r$. Recall that if $\zeta_r\notin \Q_q$, we have $j_0=0$ and so $i_0=0$. %Therefore, we extend the notation to $i_0=0$ for the case $q=r$.
\end{notation}

\begin{thm}
\label{thm:clustersQ}
Let $q$ be an odd prime of bad reduction for $C/\mathbb{Q}_q$, so in particular $q\mid r\Delta$.
	\begin{enumerate}
	\item If $q\nmid rs$ then the cluster picture of $C/\Q_q$ is

            \[ \clusterpicture            
	\Root[A] {2} {first} {r1};
	\Root[A] {3} {r1} {r2};
	\ClusterLDName c1[][n][\gamma_0,\gamma_{2i_0}] = (r1)(r2);
        \Root[A] {6} {r2} {r3};
        \Root[A] {3} {r3} {r4};
        \ClusterLDName c2[][n][\gamma_1,\gamma_{2i_0-1}] = (r3)(r4);
	\Root[Dot] {8} {r4} {r5};
	\Root[Dot] {} {r5} {r6};
	\Root[Dot] {} {r6} {r7};
	\Root[A] {5} {r7} {r8};
        \Root[A] {3} {r8} {r9};
        \ClusterLDName c3[][n][\gamma_{\frac{r-1}{2}},\gamma_{2i_0 -\frac{r-1}{2}}] = (r8)(r9);
        \Root[A] {6} {r9} {r10};
        \ClusterLDName cc[][][\gamma_{i_0}] = (r10);
	\ClusterLDName c[][0][] = (c1)(c2)(r4)(r5)(r6)(c3)(cc);
	\endclusterpicture, \quad \text{ where } n=\frac{v_{{q}}(\Delta)}{2}. \]

    \item If $q\neq r$ and $q \mid s$ then the cluster picture of $C/\Q_q$ is

        \[ \clusterpicture            
	\Root[A] {1} {first} {r1};
	\Root[A] {3} {r1} {r2};
        \Root[A] {3} {r2} {r3};
	\Root[Dot] {5} {r3} {r4};
	\Root[Dot] {} {r4} {r5};
	\Root[Dot] {} {r5} {r6};
	\Root[A] {-9} {r7} {r8};
        \Root[A] {3} {r8} {r9};
        \Root[A] {3} {r9} {r10};
	\ClusterLDName c[][\frac{v_q(s)}{r}][] = (r1)(r2)(r3)(r4)(r5)(r6)(r7)(r8)(r9)(r10);
	\endclusterpicture \]

		\item If $q=r$ and $r\nmid \Delta$, then the cluster picture of $C/\Q_r$ is

        \[ \clusterpicture            
	\Root[A] {1} {first} {r1};
	\Root[A] {3} {r1} {r2};
        \Root[A] {3} {r2} {r3};
	\Root[Dot] {5} {r3} {r4};
	\Root[Dot] {} {r4} {r5};
	\Root[Dot] {} {r5} {r6};
	\Root[A] {-9} {r7} {r8};
        \Root[A] {3} {r8} {r9};
        \Root[A] {3} {r9} {r10};
	\ClusterLDName c[][\frac{1}{r-1}][] = (r1)(r2)(r3)(r4)(r5)(r6)(r7)(r8)(r9)(r10);
	\endclusterpicture \]

		\item If $q=r$,  $r\mid \Delta$ and $r \mid s$ then the cluster picture of $C/\Q_r$ is

        \[ \quad \quad \ \clusterpicture            
	\Root[A] {1} {first} {r1};
	\Root[A] {3} {r1} {r2};
        \Root[A] {3} {r2} {r3};
	\Root[Dot] {5} {r3} {r4};
	\Root[Dot] {} {r4} {r5};
	\Root[Dot] {} {r5} {r6};
	\Root[A] {-9} {r7} {r8};
        \Root[A] {3} {r8} {r9};
        \Root[A] {3} {r9} {r10};
	\ClusterLDName c[][\frac{1}{r-1}+\frac{v_r(s)}{r}][] = (r1)(r2)(r3)(r4)(r5)(r6)(r7)(r8)(r9)(r10);
	\endclusterpicture \]

         \item If $q=r$, $1 \le v_r(\Delta) \le 2$ and $r \nmid s$ then the cluster picture of $C/\Q_r$ is  
            
        \[ \quad \quad \ \clusterpicture            
	\Root[A] {1} {first} {r1};
	\Root[A] {3} {r1} {r2};
        \Root[A] {3} {r2} {r3};
	\Root[Dot] {5} {r3} {r4};
	\Root[Dot] {} {r4} {r5};
	\Root[Dot] {} {r5} {r6};
	\Root[A] {-9} {r7} {r8};
        \Root[A] {3} {r8} {r9};
        \Root[A] {3} {r9} {r10};
	\ClusterLDName c[][\frac{1}{r-1}+\frac{v_r(\Delta)}{2r}][] = (r1)(r2)(r3)(r4)(r5)(r6)(r7)(r8)(r9)(r10);
	\endclusterpicture \]
 
            \item If $q=r$,  $v_r(\Delta) \ge 3$ and $r \nmid s$
              then the cluster picture of $C/\Q_r$ is

        \[ \clusterpicture            
	\Root[A] {2} {first} {r1};
	\Root[A] {3} {r1} {r2};
	\ClusterLDName c1[][n][\gamma_1,\gamma_{-1}] = (r1)(r2);
        \Root[A] {8} {r2} {r3};
        \Root[A] {3} {r3} {r4};
        \ClusterLDName c2[][n][\gamma_2,\gamma_{-2}] = (r3)(r4);
	\Root[Dot] {8} {r4} {r5};
	\Root[Dot] {} {r5} {r6};
	\Root[Dot] {} {r6} {r7};
	\Root[A] {8} {r7} {r8};
        \Root[A] {3} {r8} {r9};
        \ClusterLDName c3[][n][\gamma_{\frac{r-1}{2}},\gamma_{\frac{r+1}{2}}] = (r8)(r9);
        \Root[A] {8} {r9} {r10};
        \ClusterLDName cc[][][\gamma_0] = (r10);
	\ClusterLDName c[][\frac{2}{r-1}][] = (c1)(c2)(r4)(r5)(r6)(c3)(cc);
	\endclusterpicture, \quad \text{ where } n=\frac{v_r(\Delta)}{2}-\frac{r}{r-1}. \]

	\end{enumerate}
\end{thm}
\begin{proof}

(1) Suppose $q \mid \Delta$ and $q \nmid s$. Let $i_0$ as in Notation \ref{rem:root0}. By Lemma \ref{rem:root0} we get that $v_q(\gamma_{i_0}-\gamma_j) = 0$ for all $0\le j \le r-1$ with $j\neq i_0$. Thus $\gamma_{i_0}$ is an isolated root and does not belong to any other cluster than $\mathcal{R}$. Take $k\neq i_0$. Since $q \nmid s$, we have that $q \nmid z = \alpha_0\beta_0$ and case (1) of 
Lemma \ref{lemma:lematecnico} yields that $v_q(\alpha_{0}-\zeta_r^{-k-j}\beta_0)>0$ if and only if $j\equiv 2i_0-k \pmod r$. Then by Lemma \ref{lemma:differenceroots} we have
\begin{equation*}
    v_q(\gamma_k-\gamma_{2i_0-k})=v_q\left(\prod_{\substack{j=0 \\ j\neq k}}^{r-1}(\gamma_k-\gamma_j)\right)=v_q\left(\frac{-r\sqrt{\Delta}}{\zeta_r^k(\alpha_0-\zeta_r^{-2k}\beta_0)}\right)=\frac{v_q(\Delta)}{2},
\end{equation*}
where the last steps used the fact that $q \neq r$, along with $k \neq i_0$.

\vspace{5pt}

(2) Assume that $q\mid s$ and $q\mid\Delta$. By Lemmas \ref{lemma:differenceroots} and \ref{lemma:divisibility}, we have that for all ${0\le k, j \le r-1}$,\[
    	v_q(\gamma_k-\gamma_j)=v_q(\zeta_r^k(1-\zeta_r^{j-k})(\alpha_0-\zeta_r^{-j-k}\beta_0))=\frac{v_q(s)}{r},
\]
giving the desired cluster.

\vspace{5pt}

(3) If $q=r$ and $r\nmid \Delta$ then \eqref{eq:alpha-beta} gives that $v_r(\alpha_0-\zeta_r^j\beta_0)=0$ for all $0\le j \le r-1$. Then Lemma \ref{lemma:differenceroots} yields that 
\[v_r(\gamma_k-\gamma_j)=v_r(1-\zeta_r^{j-k})=\frac{1}{r-1}.\]

\vspace{5pt}

(4) If $q = r$, $r \mid \Delta$ and $r \mid s$, the proof is analogous to point (2), with the only difference being that now we have that ${v_r(1-\zeta_r^{j-k})=1/(r-1)}$, so
\begin{align*}
    	v_r(\gamma_k-\gamma_j)=v_r((1-\zeta_r^{j-k})(\alpha_0-\zeta_r^{-j-k}\beta_0))=\frac{1}{r-1} + \frac{v_r(s)}{r},
\end{align*}
where we applied Lemma \ref{lemma:divisibility} once again.

\vspace{5pt}

(5) If $q = r$, $r \nmid s$ and $1 \le v_r(\Delta) \le 2$, let us see that we are in the second case of Lemma \ref{lemma:lematecnico2}. Suppose on the contrary that the first case holds. Then, $v_r(\alpha_0-\beta_0) > v_r(1-\zeta_r) > 0$ and 
\begin{equation}
\label{eqn:valuationaux}
v_r(\alpha_0-\zeta_r^{-j}\beta_0) = v_r(1-\zeta_r) = \frac{1}{r-1},
\end{equation}
for all $1 \le j \le r-1$. 
This implies that
\begin{align}\label{eq:valuation(alpha-beta)}
    v_r(\alpha_0-\beta_0)=v_r(\alpha_0^r-\beta_0^r)-\sum_{j=1}^{r-1}v_r(\alpha_0-\zeta_r^{-j}\beta_0)=v_r(\alpha_0^r-\beta_0^r)-1 = \frac{v_r(\Delta)}{2}-1.
\end{align}
Since $v_r(\Delta) \le 2$,
         \eqref{eq:valuation(alpha-beta)} means that $v_r(\alpha_0-\beta_0)={v_r(\Delta)}/{2}-1\le 0$, giving a contradiction. Thus, case (2) of Lemma \ref{lemma:lematecnico2} holds, i.e.
            \[v_r(\alpha_0 - \zeta_r^k\beta_0) = v_r(\alpha_0 - \zeta_r^j\beta_0)\]
        for all $0 \le k,j \le r-1$. Consequently,
        \[\frac{v_r(\Delta)}{2} = v_r(\alpha_0^r-\beta_0^r) = \sum_{i=0}^{r-1} v_r(\alpha_0 - \zeta_r^i\beta_0) = rv_r(\alpha_0-\beta_0),\]
        and so 
        \[v_r(\alpha_0-\zeta_r^i\beta_0)= \frac{v_r(\Delta)}{2r},\]
        for all $i = 0, 1, \dots, r-1$. Then Lemma \ref{lemma:differenceroots} yields that
        \[
			v_r(\gamma_k-\gamma_j)=v_r((1-\zeta_r^{j-k})(\alpha_0-\zeta_r^{-j-k}\beta_0))= \frac{1}{r-1} + \frac{v_r(\Delta)}{2r},
        \]
        for all $0 \le k,j \le r-1$, as we wanted to show.

\vspace{5pt}

        (6)  Since $r \ge 5$ and $v_r(\Delta) \ge 3$, it follows that $v_r(\alpha_0^r-\beta_0^r) = v_r(\Delta) > {r}/(r-1)=rv_r(1-\zeta_r)$, and so we are in case (1) of Lemma \ref{lemma:lematecnico2} by the pigeonhole principle. Then, \eqref{eqn:valuationaux} and \eqref{eq:valuation(alpha-beta)} are still valid and, consequently, Lemma \ref{lemma:differenceroots} yields that
\[v_r(\gamma_0-\gamma_j)=v_r((1-\zeta_r)(\alpha_0-\zeta_r^{-j}\beta_0))=2v_r(1-\zeta_r)=\frac{2}{r-1},\]
for any $1 \le j \le r-1$ and 
		\begin{align*}
			v_r(\gamma_k-\gamma_j)=v_r((1-\zeta_r^{j-k})(\alpha_0-\zeta_r^{-j-k}\beta_0))=\begin{cases}
				\frac{2}{r-1} & \text{if } j\not\equiv-k \pmod r,\\
				\frac{1}{r-1} + \frac{v_r(\Delta)}{2}-1 &  \text{if } j\equiv-k \pmod r
			\end{cases}
		\end{align*}
        for any $1 \le j < k \le r-1$.
		Therefore all the roots belong to a cluster having depth ${2}/({r-1})$, the root $\gamma_0$ is isolated, and there are ${(r-1)}/{2}$ twins each with relative depth $v_r(\Delta)/2-{r}/({r-1})$.
\end{proof}

For Diophantine applications, it will be useful to compute the conductor of the curve $C$ over the field $K = \Q(\omega_1)$, with ring of integers $\cO_K$. In order to do this, we shall first compute its cluster picture. If $\id{q}$ is a prime of $\cO_K$ above $q$, by standard local field arguments we have the isomorphism $K_\id{q}\simeq \Q_q(\omega_1)$. We will constantly use this identification when working with $K_\id{q}$.

\begin{coro}
\label{coro:clusteroverK}Let $\id{q}$ be an odd prime of $\cO_K$ of bad reduction for $C/K$ lying above the rational prime $q$, and let $\id{r}$ be the unique prime ideal of $\cO_K$ over $r$. Then
	\begin{enumerate}
	\item If $q\nmid rs$ then the cluster picture of $C/K_\id{q}$ is

        \[ \clusterpicture            
	\Root[A] {2} {first} {r1};
	\Root[A] {3} {r1} {r2};
	\ClusterLDName c1[][n][\gamma_0,\gamma_{2i_0}] = (r1)(r2);
        \Root[A] {6} {r2} {r3};
        \Root[A] {3} {r3} {r4};
        \ClusterLDName c2[][n][\gamma_1,\gamma_{2i_0-1}] = (r3)(r4);
	\Root[Dot] {8} {r4} {r5};
	\Root[Dot] {} {r5} {r6};
	\Root[Dot] {} {r6} {r7};
	\Root[A] {5} {r7} {r8};
        \Root[A] {3} {r8} {r9};
        \ClusterLDName c3[][n][\gamma_{\frac{r-1}{2}},\gamma_{2i_0 -\frac{r-1}{2}}] = (r8)(r9);
        \Root[A] {6} {r9} {r10};
        \ClusterLDName cc[][][\gamma_{i_0}] = (r10);
	\ClusterLDName c[][0][] = (c1)(c2)(r4)(r5)(r6)(c3)(cc);
	\endclusterpicture, \quad \text{ where } n=\frac{v_{{q}}(\Delta)}{2}. \]
    
.
    \item If $q\neq r$ and $q\mid s$ then the cluster picture of $C/K_{\id{q}}$ is

        \[ \clusterpicture            
	\Root[A] {1} {first} {r1};
	\Root[A] {3} {r1} {r2};
        \Root[A] {3} {r2} {r3};
	\Root[Dot] {5} {r3} {r4};
	\Root[Dot] {} {r4} {r5};
	\Root[Dot] {} {r5} {r6};
	\Root[A] {-9} {r7} {r8};
        \Root[A] {3} {r8} {r9};
        \Root[A] {3} {r9} {r10};
	\ClusterLDName c[][\frac{v_{{q}}(s)}{r}][] = (r1)(r2)(r3)(r4)(r5)(r6)(r7)(r8)(r9)(r10);
	\endclusterpicture \]
    
		\item If $q = r$ and $r\nmid \Delta$, then the cluster picture of $C/K_\id{r}$ is

        \[ \clusterpicture            
	\Root[A] {1} {first} {r1};
	\Root[A] {3} {r1} {r2};
        \Root[A] {3} {r2} {r3};
	\Root[Dot] {5} {r3} {r4};
	\Root[Dot] {} {r4} {r5};
	\Root[Dot] {} {r5} {r6};
	\Root[A] {-9} {r7} {r8};
        \Root[A] {3} {r8} {r9};
        \Root[A] {3} {r9} {r10};
	\ClusterLDName c[][\frac{1}{2}][] = (r1)(r2)(r3)(r4)(r5)(r6)(r7)(r8)(r9)(r10);
	\endclusterpicture \]

		\item If $q = r$, $r\mid \Delta$ and  $r\mid s$ then the cluster picture of $C/K_\id{r}$ is

        \[ \quad \quad \ \clusterpicture            
	\Root[A] {1} {first} {r1};
	\Root[A] {3} {r1} {r2};
        \Root[A] {3} {r2} {r3};
	\Root[Dot] {5} {r3} {r4};
	\Root[Dot] {} {r4} {r5};
	\Root[Dot] {} {r5} {r6};
	\Root[A] {-9} {r7} {r8};
        \Root[A] {3} {r8} {r9};
        \Root[A] {3} {r9} {r10};
	\ClusterLDName c[][\frac{1}{2}+\frac{(r-1)v_{r}(s)}{2r}][] = (r1)(r2)(r3)(r4)(r5)(r6)(r7)(r8)(r9)(r10);
	\endclusterpicture \]

            \item If $q=r$, $1 \le v_{r}(\Delta) \le 2$ and $r\nmid s$  then the cluster picture of $C/K_\id{r}$ is  
    
        \[ \quad \quad \ \clusterpicture            
	\Root[A] {1} {first} {r1};
	\Root[A] {3} {r1} {r2};
        \Root[A] {3} {r2} {r3};
	\Root[Dot] {5} {r3} {r4};
	\Root[Dot] {} {r4} {r5};
	\Root[Dot] {} {r5} {r6};
	\Root[A] {-9} {r7} {r8};
        \Root[A] {3} {r8} {r9};
        \Root[A] {3} {r9} {r10};
	\ClusterLDName c[][\frac{1}{2}+\frac{(r-1)v_{r}(\Delta)}{4r}][] = (r1)(r2)(r3)(r4)(r5)(r6)(r7)(r8)(r9)(r10);
	\endclusterpicture \]

            \item If $q=r$, $v_{r}(\Delta) \ge 3$ and $r\nmid s$
              then the cluster picture of $C/K_\id{r}$ is

        \[ \clusterpicture            
	\Root[A] {2} {first} {r1};
	\Root[A] {3} {r1} {r2};
	\ClusterLDName c1[][n][\gamma_1,\gamma_{-1}] = (r1)(r2);
        \Root[A] {8} {r2} {r3};
        \Root[A] {3} {r3} {r4};
        \ClusterLDName c2[][n][\gamma_2,\gamma_{-2}] = (r3)(r4);
	\Root[Dot] {8} {r4} {r5};
	\Root[Dot] {} {r5} {r6};
	\Root[Dot] {} {r6} {r7};
	\Root[A] {8} {r7} {r8};
        \Root[A] {3} {r8} {r9};
        \ClusterLDName c3[][n][\gamma_{\frac{r-1}{2}},\gamma_{\frac{r+1}{2}}] = (r8)(r9);
        \Root[A] {8} {r9} {r10};
        \ClusterLDName cc[][][\gamma_0] = (r10);
	\ClusterLDName c[][1][] = (c1)(c2)(r4)(r5)(r6)(c3)(cc);
	\endclusterpicture, \quad \text{ where } n=\frac{(r-1)v_{r}(\Delta)}{4}-\frac{r}{2}. \]

	\end{enumerate}
\end{coro}
 \begin{proof}
    We remark that, since $K$ is a subfield of $\Q(\zeta_r)$, it is unramified at any prime $q \neq r$ and totally ramified at $q = r$. Since the degree of the extension $K/\Q$ is ${(r-1)}/{2}$, it follows     that, for any $a \in K$, 
    \begin{align}\label{eq:idealvaluation}
    v_{\id{q}}(a)=\begin{cases}
    v_q(a) & \text{ if } \id{q} \nmid r, \\
    \frac{r-1}{2}v_r(a) & \text{ if } \id{q} = \id{r}.
    \end{cases} 
    \end{align}
   Then, the desired result is a direct consequence of Theorem \ref{thm:clustersQ}.
\end{proof}

\section{\texorpdfstring{Ramification index of the splitting field of $F(x)$}{Ramification index of the splitting field of F(x)}}
\label{Sec:ramification}

The behaviour of the conductor will be qualitatively different depending on whether $F(x)$ is reducible or not over $\Q_q$, so we first give conditions for the irreducibility of $F(x)$. This section follows very closely \cite[Section 4.3]{CelineClusters}. 

\subsection{\texorpdfstring{Criteria for the irreducibility of $F(x)$}{Criteria for the irreducibility of F(x)}}

We begin this section by stating the following result, which relates the reducibility of $F(x)$ with one of its roots and $\alpha_0$. 

\begin{prop} \label{prop:gamma0} The following are equivalent:
\begin{enumerate}
\item[(i)] There exists a root $\gamma_{k_0}\in \Q_q$.

 \item[(ii)]   $F(x)$ is reducible over $\Q_q$.

 \item[(iii)]  $x^r - \frac{s+\sqrt{\Delta}}{2}$ is reducible over $\Q_q(\sqrt{\Delta})$, and so $\alpha_0\in\Q_q(\sqrt{\Delta})$.
\end{enumerate}
 \end{prop}

 \begin{proof}  
 $(i) \Rightarrow (ii)$ is straightforward. Let us prove $(ii) \Rightarrow (iii)$ and $(iii) \Rightarrow (i)$.

 \vspace{3pt}

\noindent $(ii) \Rightarrow (iii)$: Consider the following towers of field extensions:
 \begin{equation}
    \label{eqn:tower1}
     \Q_q\subseteq^{d_0\mid 2}\Q_q(\sqrt{\Delta})\subseteq^{n_0 \le r}\Q_q(\sqrt{\Delta}, \alpha_0),
\end{equation}
and
\begin{equation}
\label{eqn:tower2}
\Q_q\subseteq^{n_0'\le r}\Q_q(\gamma_0)\subseteq^{d_0'\mid 2}\Q_q(\gamma_0, \alpha_0),
 \end{equation}
 where the superindices denote the degree of the extension. We claim that $\Q_q(\sqrt{\Delta}, \alpha_0) = \Q_q(\gamma_0, \alpha_0)$. Firstly, by \eqref{eqn:rootproduct}, we have that 
 \begin{equation}
 \label{eqn:beta0}
 \beta_0 = \frac{z}{\alpha_0} \in \Q_q(\sqrt{\Delta}, \alpha_0),
 \end{equation}
 and so $\gamma_0 = \alpha_0 + \beta_0 \in \Q_q(\sqrt{\Delta}, \alpha_0)$, and thus $\Q_q(\gamma_0, \alpha_0) \subseteq \Q_q(\sqrt{\Delta}, \alpha_0)$ . Conversely, by \eqref{eq:alpha-beta}, it follows that 
 \[\sqrt{\Delta} = \beta_0^r-\alpha_0^r = (\gamma_0-\alpha_0)^r - \alpha_0^r \in \Q_q(\gamma_0, \alpha_0),\]
 proving the equality of fields. Consequently, 
 \begin{equation}
 \label{eqn:productdegrees}
 d_0n_0 = n_0'd_0'.
 \end{equation}
 If $F(x)$ is reducible over $\Q_q$, it follows that $n'_0 < r$ and, from \eqref{eqn:productdegrees}, $n_0 < r$. From the definition of $\alpha_0$, this means that the polynomial
 \[x^r - \frac{s+\sqrt{\Delta}}{2} \in \Q_q(\sqrt{\Delta})[x]\]
 is reducible. 

 \vspace{3pt}
 
 \noindent $(iii)\Rightarrow (i)$: From Remark \ref{rmk:rootbasefield}, it follows that $\alpha_0 \in \Q_q(\sqrt{\Delta})$ and by \eqref{eqn:beta0}, $\beta_0 \in \Q_q(\sqrt{\Delta})$. Let $\sigma$ be the non-trivial element of $\Gal(\Q_q(\sqrt{\Delta})/\Q_q)$. Since $\alpha_0^r=(-s-\sqrt{\Delta})/2$, we have  $\sigma(\alpha_0)^r=(-s+\sqrt{\Delta})/2$ and so $\sigma(\alpha_0)$ is an $r$-th root of $(-s+\sqrt{\Delta})/2$. Therefore, $\sigma(\alpha_0)=\beta_0\zeta_r^{-j}$, for some $0\le j\le r-1$. Then, since
\[\alpha_0\beta_0\zeta_r^j=\alpha_0\sigma(\alpha_0)=\text{Norm}(\alpha_0)\in\Q_q,\] 
and $\alpha_0\beta_0=z\in\Q_q$, we have that $\zeta_r^j\in\Q_q$. Take $0\le k\le r-1$ such that $2k\equiv1\pmod r$. Abusing the notation, we have
\begin{align*}
\gamma_{kj} &= \zeta_r^{kj}\alpha_0+\zeta_r^{-kj}\beta_0\\
&=\zeta_r^{kj}\alpha_0+\zeta_r^{-j+kj}\beta_0\\
&=\zeta_r^{kj}(\alpha_0+\beta_0)\\
&=\zeta_r^{kj}(\alpha_0+\sigma(\alpha_0))=\zeta_r^{kj}\text{Trace}(\alpha_0)\in\Q_q,
\end{align*}
 where the second equality follows from the fact that $-j+kj\equiv-kj\pmod r$. Then, the result follows by taking $0\le k_0 \le r-1$ such that $k_0\equiv kj \pmod r$.
 %It is clear that  either $\alpha_0, \beta_0 \in \Q_q$ or that $\alpha_0 = \sigma(\beta_0)$, where $\sigma$ is the unique non-trivial element of $\Gal(\Q_q(\sqrt{\Delta})/\Q_q)$. \TODO{Complete}% In either case, $\gamma_0 = \alpha_0 + \beta_0 \in \Q_q$.%, as we wanted to show.
 \end{proof}
 \begin{remark}\label{rem:k0}
 Notice that, from the proof of Proposition \ref{prop:gamma0},  $k_0=0$ if $\zeta_r\notin\Q_q$.
 \end{remark}

\begin{comment}
Let $i_0$ be as in Notation \ref{rem:root0}.

\begin{lemma}
If $q\mid \Delta$ and  $F(x)$ is reducible then $\gamma_{i_0}\in\Q_q$.
\end{lemma}

\begin{proof} By \cite[Remark 3.2]{BBB}, the absolute Galois group $G_{\Q_q}$ acts on clusters via its action on the roots, and it preserves depths and containments of clusters. Then, the result follows from Theorem \ref{thm:clustersQ}.
\end{proof}

\end{comment}
In order to prove conditions for the irreducibility of $F(x)$ in certain cases, we will need the following elementary lemma.

\begin{lemma}
\label{lemma:srs}
    Suppose that $v_r(\Delta) \ge 2$ and that $r \nmid s$. Then $v_r(s^r-2^{r-1}s) \ge 2$.
\end{lemma}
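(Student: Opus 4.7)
The plan is first to reduce the claim to a congruence on $s$ alone. Factor $s^r - 2^{r-1}s = s(s^{r-1} - 2^{r-1})$; since $r \nmid s$, it suffices to prove $s^{r-1} \equiv 2^{r-1} \pmod{r^2}$. The hypothesis $v_r(\Delta) \geq 2$ means $s^2 \equiv 4z^r \pmod{r^2}$, and $r \nmid s$ forces $r \nmid z$ (otherwise $r^r \mid 4z^r$ would make $r \mid s$). Raising the congruence $s^2 \equiv 4z^r \pmod{r^2}$ to the $(r-1)/2$ power (which preserves the congruence mod $r^2$, by the binomial expansion applied to the error term), I obtain $s^{r-1} \equiv 2^{r-1} z^{r(r-1)/2} \pmod{r^2}$, reducing the problem to showing $z^{r(r-1)/2} \equiv 1 \pmod{r^2}$.

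To prove this, I would introduce Fermat-quotient-style expansions $z^{r-1} = 1 + ru$ and $z^{(r-1)/2} = 1 + rw$ with $u, w \in \Z_r$. The first is Fermat's little theorem. The second requires $z$ to be a quadratic residue mod $r$, which follows from Euler's criterion once one notes that $s^2 \equiv 4z^r \equiv 4z \pmod{r}$ exhibits $z$ as $(s/2)^2 \pmod{r}$. Squaring the second expansion and comparing with the first gives $u \equiv 2w \pmod{r}$.

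Then write $z^{r(r-1)/2} = z^{(r-1)/2}(z^{r-1})^{(r-1)/2} = (1+rw)(1+ru)^{(r-1)/2}$. The binomial theorem modulo $r^2$ gives $(1+ru)^{(r-1)/2} \equiv 1 + \tfrac{r-1}{2} ru \pmod{r^2}$, so multiplying yields
\[
z^{r(r-1)/2} \equiv 1 + r\!\left(w + \tfrac{r-1}{2}u\right) \pmod{r^2}.
\]
Using $\tfrac{r-1}{2} \equiv -\tfrac{1}{2} \pmod{r}$ together with $u \equiv 2w \pmod{r}$, the coefficient of $r$ simplifies to $w - w \equiv 0 \pmod{r}$, which gives the desired congruence.

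The main subtlety is the cancellation in the last step: a priori $z^{r(r-1)/2}$ is only a square root of $z^{r(r-1)} \equiv 1 \pmod{r^2}$, hence $\pm 1$ modulo $r^2$, and without further input there is no reason to prefer $+1$. The nontrivial use of the hypothesis $v_r(\Delta) \geq 2$ is precisely to force $z$ to be a QR mod $r$ and hence $z^{(r-1)/2}$ to lie in $1 + r\Z_r$, after which the Fermat-quotient matching $u \equiv 2w$ cleanly produces the $+1$ sign.
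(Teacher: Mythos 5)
Your proof is correct, and it takes a genuinely different route from the paper's. The paper writes $s^r = 2^{r-1}s + ar$ (Fermat) and shows $r \mid a$ by expanding $4^{r-1}\Delta \equiv (s^r - ar)^2 - 4^r z^r \pmod{r^2}$ and cancelling using $s^2 \equiv 4z^r \pmod{r^2}$ together with $z^{r(r-1)} \equiv 1 \pmod{r^2}$; this avoids any sign delicacy because the exponent $r(r-1)$ is the full order of $(\Z/r^2\Z)^\times$. Your argument instead factors $s^r - 2^{r-1}s = s(s^{r-1}-2^{r-1})$ and reduces the claim to $z^{r(r-1)/2} \equiv 1 \pmod{r^2}$, which is \emph{not} automatic (only $\pm 1$ is); you correctly identify that the hypothesis forces $z$ to be a quadratic residue mod $r$ via $s^2 \equiv 4z \pmod{r}$, and then use the Fermat-quotient matching $u \equiv 2w \pmod{r}$ to nail the $+1$ sign. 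Your route isolates a slightly finer structural fact about $z$ at the cost of the QR/sign bookkeeping; the paper's route is more compact and sidesteps that subtlety entirely by staying at the level of $z^{r(r-1)}$. Both are elementary and both correctly locate where the hypothesis $v_r(\Delta)\ge 2$ enters.
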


\begin{proof}
    By Fermat's Little Theorem, it is clear that $s^r \equiv 2^{r-1}s \pmod{r}$, and so $s^r = 2^{r-1}s + ar$ for some $a \in \Z$. It then suffices to prove that $r \mid a$. We note that 
    \begin{equation}
    \label{eqn:congruence1}
    4^{r-1}\Delta = (2^{r-1}s)^2-4^rz^{r} = (s^r-ar)^2-4^rz^{r} \equiv s^{2r} - 2ars^r - 4^rz^{r} \pmod{r^2}.
    \end{equation}
    Now, since $r^2 \mid \Delta$, we have that $s^2 \equiv 4z^{r} \pmod{r^2}$ and therefore
    \begin{equation}
    \label{eqn:congruence2}
    s^{2r} \equiv (4z^r)^{r} \equiv 4^rz^{r} \pmod{r^2}, 
    \end{equation}
    where the last congruence follows by
one more application of Fermat's Little Theorem. Combining \eqref{eqn:congruence1}, \eqref{eqn:congruence2} and the fact that $r^2 \mid \Delta$, we get that $r^2 \mid 2ars^r$, which can only happen if $r \mid a$, thereby proving the lemma.
\end{proof}

Now, we are finally ready to prove the main result of this subsection, which, under the assumption that $q \mid \Delta$, completely characterizes when $F(x)$ is irreducible over $\Q_q$.

 \begin{lemma}\label{lemma:reducible}
 Suppose that $q \mid \Delta$. Then the following holds.
 \begin{enumerate}
 \item If $q\nmid s$ then $F(x)$ is irreducible over $\Q_q$ if and only if $q=r$ and $v_r(\Delta)\le 2$.
 \item If $q\mid s$ then $F(x)$ is irreducible over $\Q_q$ and either $r \nmid v_q(\alpha_0^r)$ or $r \nmid v_q(\beta_0^r)$. 
 \end{enumerate}

 \end{lemma}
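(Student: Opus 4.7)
The plan is to decide reducibility by testing whether $\gamma_0\in\Q_q$, via Proposition~\ref{prop:gamma0}, splitting into subcases dictated by the cluster picture. For part~(1), suppose $q\mid\Delta$ and $q\nmid s$. If either $q\neq r$ (Theorem~\ref{thm:clustersQ}(1)) or $q=r$ with $v_r(\Delta)\ge 3$ (Theorem~\ref{thm:clustersQ}(6)), the cluster picture singles $\gamma_0$ out as the unique root not in a twin: every other $\gamma_j$ lies in a twin $\{\gamma_j,\gamma_{-j}\}$ of strictly positive relative depth, while $\gamma_0$ sits alone under the top cluster. Since the unique extension of $v_q$ to $\overline{\Q}_q$ is Galois-equivariant, every $\sigma\in\Gal(\overline{\Q}_q/\Q_q)$ sends $\gamma_0$ to a root with the same uniqueness property, forcing $\sigma(\gamma_0)=\gamma_0$; hence $\gamma_0\in\Q_q$ and $(x-\gamma_0)\mid F$, so $F$ is reducible.

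The remaining subcase $q=r$, $1\le v_r(\Delta)\le 2$ is the main difficulty. Theorem~\ref{thm:clustersQ}(5) places all $r$ roots in a single cluster of depth $d=\tfrac{1}{r-1}+\tfrac{v_r(\Delta)}{2r}$ with no proper subcluster, so $v_r(\gamma_0-\gamma_j)=d$ for every $j\neq 0$, and
\[
v_r(F'(\gamma_0))\;=\;\sum_{j\neq 0}v_r(\gamma_0-\gamma_j)\;=\;(r-1)d\;=\;1+\frac{(r-1)\,v_r(\Delta)}{2r}.
\]
This equals $(3r-1)/(2r)$ when $v_r(\Delta)=1$ and $(2r-1)/r$ when $v_r(\Delta)=2$. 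Since $\gcd(3r-1,r)=\gcd(2r-1,r)=1$, both fractions fail to be integers in lowest terms. Therefore $F'(\gamma_0)\notin\Q_r$, which forces $\gamma_0\notin\Q_r$, and Proposition~\ref{prop:gamma0} gives $F$ irreducible.

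For part~(2), the hypotheses $q\mid\Delta$ and $q\mid s$ combined with~\ref{item1} force $q\mid z$. Lemma~\ref{lemma:divisibility} yields $\min\{v_q(\alpha_0),v_q(\beta_0)\}=v_q(s)/r$, hence $v_q(\gamma_j)=v_q(\zeta_r^j\alpha_0+\zeta_r^{-j}\beta_0)\ge v_q(s)/r$ for every root; the identity $\prod_j\gamma_j=-s$ then forces equality $v_q(\gamma_j)=v_q(s)/r$ for all $j$. The Newton polygon of $F$ is therefore the single segment from $(r,0)$ to $(0,v_q(s))$ of slope $-v_q(s)/r$, and condition~\ref{item2} makes the denominator of this slope equal $r$ in lowest terms. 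Any root of $F$ then generates an extension of $\Q_q$ of ramification $\ge r$, and $\deg F=r$ forces $[\Q_q(\gamma_0):\Q_q]=r$, so $F$ is irreducible. The component realizing Lemma~\ref{lemma:divisibility}'s minimum (say $\alpha_0$) satisfies $v_q(\alpha_0^r)=v_q(s)$, which by~\ref{item2} is not divisible by $r$; this is the second assertion. The main obstacle is the fractional-valuation computation in subcase~(1c): one must invoke Theorem~\ref{thm:clustersQ}(5) carefully to ensure no hidden proper subcluster can make some $v_r(\gamma_0-\gamma_j)$ strictly larger than $d$, after which the irreducibility reduces to the arithmetic observation above.
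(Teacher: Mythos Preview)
Your argument is correct, and it is genuinely different from the paper's.

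For part~(1), the paper never touches the cluster picture: it works entirely with the auxiliary polynomial $f(x)=2(x^r+\alpha_0^r)\in\Q_q(\sqrt{\Delta})[x]$ and its shift $g(x)=f(x-\alpha_0^r)$. When $q\neq r$ it lifts a root of $f$ via Hensel's lemma; when $q=r$ and $v_r(\Delta)\le 2$ it proves $g$ is Eisenstein (this is where the arithmetic Lemma~\ref{lemma:srs} on $r^2\mid s^r-2^{r-1}s$ is needed); and when $q=r$ with $v_r(\Delta)\ge 3$ it reads reducibility off the Newton polygon of $g$. Your route instead recycles Theorem~\ref{thm:clustersQ}: in the reducible cases the isolated position of $\gamma_0$ is a Galois-invariant combinatorial property, forcing $\gamma_0\in\Q_q$; in the irreducible case the single-cluster picture lets you compute $v_r(F'(\gamma_0))=(r-1)d_{\mathcal R}\notin\Z$, whence $\gamma_0\notin\Q_r$ and Proposition~\ref{prop:gamma0} finishes. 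For part~(2) the paper argues by contradiction through Proposition~\ref{prop:gamma0} and the unramifiedness of $\Q_q(\sqrt\Delta)/\Q_q$, whereas you go straight through the root valuations $v_q(\gamma_j)=v_q(s)/r$ and~\ref{item2}.

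Your approach is shorter and cleanly exploits work already done in Section~\ref{sec:clusterpictureC}; it also bypasses Lemma~\ref{lemma:srs} entirely. The paper's approach, on the other hand, is logically independent of the cluster computation and produces as a by-product an explicit Eisenstein polynomial for $\Q_q(\alpha_0)/\Q_q(\sqrt\Delta)$, which is exactly what is reused in the discriminant calculation of Lemma~\ref{lemma:valuationdiscriminants}(3). So the paper's longer route is not wasted: it sets up the wild-conductor computation, while yours would still need that Eisenstein polynomial (or an equivalent discriminant calculation) later.
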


 \begin{proof}

(1) Firstly, let us consider the case $q \neq r$. Let $\id{q}$ be the unique prime ideal of $\Q_q(\sqrt{\Delta})$ lying over $q$. Since $q \mid \Delta$, it follows that $\id{q}\mid \Delta = s^2-4z^{r}$ so that 
 \begin{equation}
 \label{eqn:congruencia}
 s\equiv\pm2 z^r = 2(\pm z)^r\pmod {\id{q}}.
 \end{equation}
 Now, let us consider the polynomial $f(x) \in \Q_q(\sqrt{\Delta})[x]$ given by
 \begin{equation}
 \label{eqn:auxiliarypoly}
 f(x) = x^r +\alpha_0^r = x^r - \frac{s+\sqrt{\Delta}}{2}.
 \end{equation}
From \eqref{eqn:congruencia}, it is clear that $f(\pm z) \equiv 0 \pmod{\id{q}}$. Since $q \nmid s$, it follows that $q \nmid z$ and we may apply Hensel's Lemma \cite[Chapter 4, Lemma 3.1]{Cassels} to conclude that $f(x)$ has a root in $\Q_q(\sqrt{\Delta})$. Then, $F(x)$ is reducible over $\Q_q$ by  Proposition \ref{prop:gamma0}.

\vspace{5pt}

If $q = r$, we need to study the polynomial $g(x) = f(x - \alpha_0^r)$. It is clear that $g(x)$ will be reducible if and only if $f(x)$ is reducible and, by Proposition \ref{prop:gamma0}, this will happen precisely when $F(x)$ is reducible. We expand $g(x)$ and get the expression 
\begin{equation}
\label{eqn:g}
g(x) = (x-\alpha_0^r)^r + \alpha_0^r = x^r + \sum_{i=1}^{r-1} \binom{r}{i}(-1)^ix^{r-i} \alpha_0^{ir} + (-\alpha_0^{r^2}+\alpha_0^r).
\end{equation}
Suppose that $1 \le v_r(\Delta) \le 2$. We claim that the polynomial $g(x)$ is Eisenstein and therefore irreducible. Clearly, it suffices to see that $-\alpha_0^{r^2}+\alpha_0^r$ is a uniformizer in $\Q_r(\sqrt{\Delta})$. Let $\id{r}$ be the unique prime ideal of $\Q_r(\sqrt{\Delta})$ lying over $r$. If $v_r(\Delta) = 2$, the extension $\Q_r(\sqrt{\Delta})/\Q_r$ is unramified and thus $v_\id{r}(\sqrt{\Delta}) = v_{r}(\sqrt{\Delta}) = v_r(\Delta)/2 = 1$. If $v_r(\Delta) = 1$, the extension $\Q_r(\sqrt{\Delta})/\Q_r$ is totally ramified and thus $v_\id{r}(\Delta) = 2v_r(\Delta) = 2$. In any of the two cases, it follows that $v_{\id{r}}(\sqrt{\Delta}) = 1$. From the expression of $\alpha_0$ in \eqref{eqn:alpha0beta0}, we find that 
\[ 2\alpha_0^r \equiv -s-\sqrt{\Delta} \pmod{\id{r}^2} \quad \text{and} \quad 2^r\alpha_0^{r^2} \equiv -s^r \pmod{\id{r}^2} ,
\]
and so 
\[2^r(-\alpha_0^{r^2} + \alpha_0^r) \equiv s^r-2^{r-1}s-2^{r-1}\sqrt{\Delta} \equiv -2^{r-1}\sqrt{\Delta} \pmod{\id{r}^2},\]
where the last congruence follows from Lemma \ref{lemma:srs}, if $v_r(\Delta) = 2$ and from the fact that $v_\id{r}(s^r-2^{r-1}s) = 2v_{r}(s^r-2^{r-1}s)\ge2$ if $v_r(\Delta) = 1$, by Fermat's Little Theorem. %On the other hand, if $v_r(\Delta) = 1$, we have that $v_\id{r}(s^r-2^{r-1}s) = 2v_{r}(s^r-2^{r-1}s)$. By Fermat's Little Theorem, together with the fact that $r$ is an odd prime, we have that $s^r \equiv 2^{r-1}s \pmod{r}$ and so $v_r(s^r-2^{r-1}s) \ge 1$, implying that $v_\id{r}(s^r-2^{r-1}s) \ge 2$. This proves the last congruence. Thus, if $v_r(\Delta) \le 2$, we have that $v_{\id{r}}(\alpha_0^r-\alpha_0^{r^2}) = 1$ and the polynomial $g(x)$ in \eqref{eqn:g} is Eisenstein, as desired. 

\vspace{5pt}

Finally, let us suppose that $v_r(\Delta) \ge 3$. If $v_r(\Delta)$ is odd, the extension $\Q_r(\sqrt{\Delta})/\Q_r$ is ramified and mimicking the previous proof yields that 
\begin{equation}
\label{eqn:congruencescubic}
2\alpha_0^r \equiv -s \pmod{\id{r}^3} \quad \text{and}  \quad 2^r\alpha_0^{r^2} \equiv -s^r \pmod{\id{r}^3} .
\end{equation}
By Lemma \ref{lemma:srs}, we have that $v_{\id{r}}(s^r-2^{r-1}s) = 2v_r(s^r-2^{r-1}s) \ge 4$ and therefore, \eqref{eqn:congruencescubic} directly implies that $v_{\id{r}}(\alpha_0^r-\alpha_0^{r^2}) \ge 3$. Since $r \nmid s$ and $r \mid \Delta$, we have that $\id{r} \nmid \alpha_0$ and so the valuation of all the coefficients $a_i$ of $g(x)$ with $1 \le i \le r-1$ is equal to 
\[v_{\id{r}}(a_i) = v_{\id{r}}\left(\binom{r}
{i}\right) = v_{\id{r}}(r) = 2.
\]
Consequently, the Newton polygon of $g(x)$ has two edges (the first one connecting the vertices $(0,v_{\id{r}}(2(\alpha_0^r-\alpha_0^{r^2})))$ and $(1, 2)$ and the second one connecting  $(1,2)$ and $(r,0)$). By \cite[Chapter 6, Theorem 3.1]{Cassels}, it then follows that $g(x)$ is reducible. 

If $v_r(\Delta) \ge 3$ is even, the extension $\Q_r(\sqrt{\Delta})/\Q_r$ is now unramified and so $v_r(\sqrt{\Delta}) = v_r(\Delta)/2 \ge 2$. This, together with the expression of $\alpha_0$ in \eqref{eqn:alpha0beta0} gives that
\[2\alpha_0^r \equiv -s \pmod{r^2} \quad \text{and} \quad 2^r\alpha_0^{r^2} \equiv -s^r \pmod{r^2},\]
and so
\[2^r(\alpha_0^{r^2}-\alpha_0^r) \equiv -s^r + 2^{r-1}s \pmod{r^2}.\]
Hence $v_r(-\alpha_0^{r^2}-\alpha_0^r) \ge 2$ by Lemma \ref{lemma:srs}
%, but a similar proof applies in order to show that $v_r(\alpha_0^r-\alpha_0^{r^2}) \ge 2$ 
while $v_r(a_i) = v_r(r) = 1$ for $1 \le i \le r-1$. Then, the polynomial $g(x)$ is reducible by an identical argument to the preceding one. 

\vspace{5pt}

(2) Suppose for the sake of contradiction that $F(x)$ is reducible over $\Q_q$. Then, by Proposition \ref{prop:gamma0}, $\alpha_0 \in \Q_q(\sqrt{\Delta})$ and thus $\beta_0 = z/\alpha_0 \in \Q_q(\sqrt{\Delta})$. In addition, we note that, by Lemma \ref{lemma:divisibility}, $v_q(\Delta) = 2v_q(s)$ is even and, consequently, the extension $\Q_q(\sqrt{\Delta})/\Q_q$ is unramified. Thus, $v_q(\alpha_0)$ and $v_q(\beta_0)$ are positive integers and Lemma \ref{lemma:divisibility} yields that ${v_q(s) = r\min\{v_q(\alpha_0), v_q(\beta_0)\} \in r\mathbb{Z}}$. This is a contradiction with condition \ref{item2}, and, consequently $F(x)$ is irreducible over $\Q_q$. Note that condition \ref{item1} also implies that at least one of $v_q(\alpha_0^r)$ or $v_q(\beta_0^r)$ cannot be divisible by $r$.
 \end{proof}

%\begin{remark}
%    \label{rmk:martinirred}
 %   \TODO{With the previous lemma, we note that the reducibility of $F(x)$ over $\Q_q$ is completely determined for all primes $q$ of bad reduction except if $q = r$ and $r \nmid \Delta$. In this case, our numerical experiments show that $F(x)$ can either be reducible or irreducible.}
    
  %  \TODO{In terms of irreducibility, our results are stronger than the ones developed simultaneously in \cite{Martinpreprint}. Using his notation, Lemma \ref{lemma:reducible} shows that $g_r^-$ is always irreducible if either $1 \le v_r \le 2$ or if $v_r \le 0$ with $v_r \equiv 0 \pmod{r}$ and $rv_r(\delta_\Q^2) + v_r = 0$.}
%\end{remark}
 
\subsection{\texorpdfstring{Splitting field of $F(x)$}{Splitting field of F(x)}}
We now explicitly describe the splitting field of $F(x)$, as a polynomial defined over $\Q_q$, which will naturally be different depending on the reducibility of $F(x)$ over $\Q_q$. For this purpose, let $\phi(x,y)\in\Q_q[x]$ denote the polynomial
\[\phi(x,y) = \frac{x^r+y^r}{x+y} = x^{r-1} - x^{r-2}y + \dots - xy^{r-2} + y^{r-1}.
\]
\begin{comment}
\begin{lemma} 
    \label{lemma:rootsingamma0}
	Let $\alpha_0$ and $\beta_0$ be as in \eqref{eqn:alpha0beta0}. Then, $\phi(\alpha_0\zeta_r^j,-\beta_0\zeta_r^{-j})\in\Q_q(\gamma_j)$.
\end{lemma}
\begin{proof}
    Expanding the expression for $\phi(\alpha_0\zeta_r^j, -\beta_0\zeta_r^{-j})$, we get that 
    \begin{equation}
    \label{eqn:phialpha0beta0}
    \phi(\alpha_0\zeta_r^j, -\beta_0\zeta_r^{-j}) = \sum_{i=1}^{r-1} (\alpha_0\zeta_r^j)^i(\beta_0\zeta_r^{-j})^{r-1-i}.
    \end{equation}
    Clearly, if $\alpha_0\zeta_r^j, \beta_0\zeta_r^{-j} \in \Q_q(\gamma_j)$, the statement of the lemma is true, so suppose that ${\alpha_0\zeta_r^j, \beta_0\zeta_r^{-j} \not \in \Q_q(\gamma_j)}$. 
    By \eqref{eqn:rootproduct} and the definition of $\gamma_j$, it follows that $\alpha_0\zeta_r^j$ and $\beta_0\zeta_r^{-j}$ are roots of the polynomial
    \[(x-\alpha_0\zeta_r^j)(x-\beta_0\zeta_r^{-j}) = x^2 - \gamma_jx + z \in \Q_q(\gamma_j)[x],\]
    so the extension $\Q_q(\alpha_0\zeta_r^j)/\Q_q(\gamma_j)$ is quadratic and the only non-trivial element $\sigma$ of its Galois group permutes $\alpha_0\zeta_r^j$ and $\beta_0\zeta_r^{-j}$. It is then clear by the expression in \eqref{eqn:phialpha0beta0} that $\sigma$ leaves $\phi(\alpha_0\zeta_r^j, -\beta_0\zeta_r^{-j})$ invariant and, consequently, $\phi(\alpha_0\zeta_r^j, -\beta_0\zeta_r^{-j}) \in \Q_q(\gamma_j)$.
\end{proof}
\end{comment}

\begin{lemma} 
    \label{lemma:rootsingamma0}
	Let $\alpha_0$ and $\beta_0$ be as in \eqref{eqn:alpha0beta0}. Then, $\phi(\alpha_0,-\beta_0)\in\Q_q(\gamma_0)$.
\end{lemma}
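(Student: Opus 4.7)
The plan is to exploit the symmetry in $\alpha_0$ and $\beta_0$ of the expression $\phi(\alpha_0,-\beta_0)$. First I would expand $\phi(\alpha_0,-\beta_0)$ as the alternating sum
\[
\phi(\alpha_0,-\beta_0) \;=\; \sum_{k=0}^{r-1}(-1)^k \alpha_0^{\,r-1-k}(-\beta_0)^k \;=\; \sum_{k=0}^{r-1}\alpha_0^{\,r-1-k}\beta_0^{\,k},
\]
where the factors of $(-1)^{2k}$ collapse since each $(-1)^k$ from the coefficients of $\phi$ cancels with $(-\beta_0)^k$. The point of this rewriting is that the resulting sum is manifestly \emph{symmetric} in $\alpha_0$ and $\beta_0$ (the substitution $j = r-1-k$ exchanges the two variables without changing the sum).

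By the fundamental theorem of symmetric polynomials, $\phi(\alpha_0,-\beta_0)$ is therefore a polynomial with integer coefficients in the two elementary symmetric functions $\alpha_0+\beta_0$ and $\alpha_0\beta_0$. By \eqref{eqn:rootproduct} in the proof of Proposition \ref{prop:roots}, these are precisely
\[
\alpha_0+\beta_0 \;=\; \gamma_0 \qquad\text{and}\qquad \alpha_0\beta_0 \;=\; z,
\]
both of which lie in $\Q_q(\gamma_0)$ since $z \in \Q_q$. This gives $\phi(\alpha_0,-\beta_0)\in\Q_q(\gamma_0)$, as desired.

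No serious obstacle is expected here: once the identity $(-1)^k(-\beta_0)^k=\beta_0^k$ is observed (which relies only on the parity of the exponents, not on $r$ being prime), the symmetry is immediate and the conclusion follows from a standard Galois-theoretic argument — any element symmetric under the swap $\alpha_0\leftrightarrow\beta_0$ is fixed by $\mathrm{Gal}(\Q_q(\alpha_0,\beta_0)/\Q_q(\gamma_0,z))$, hence belongs to $\Q_q(\gamma_0)$. I would not carry out the explicit expansion in terms of $\gamma_0$ and $z$, since the abstract symmetry argument suffices.
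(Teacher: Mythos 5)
Your proof is correct, and it takes a slightly different route than the paper. The paper observes the same expansion $\phi(\alpha_0,-\beta_0)=\sum_{k=0}^{r-1}\alpha_0^{\,r-1-k}\beta_0^{\,k}$ and then argues Galois-theoretically: it splits into the case $\alpha_0,\beta_0\in\Q_q(\gamma_0)$ (trivial) and the case where they are not, in which $\alpha_0$ and $\beta_0$ are the two roots of $x^2-\gamma_0x+z\in\Q_q(\gamma_0)[x]$, so $\Q_q(\alpha_0)/\Q_q(\gamma_0)$ is a quadratic Galois extension whose nontrivial automorphism swaps $\alpha_0\leftrightarrow\beta_0$ and hence fixes the symmetric sum. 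Your main argument instead invokes the fundamental theorem of symmetric polynomials directly: since $\phi(\alpha_0,-\beta_0)$ is a symmetric polynomial with integer coefficients, it is a polynomial with integer coefficients in $\alpha_0+\beta_0=\gamma_0$ and $\alpha_0\beta_0=z$, both of which lie in $\Q_q(\gamma_0)$. This is a bit cleaner because it bypasses the case distinction entirely (it does not care whether $\alpha_0$ already lies in $\Q_q(\gamma_0)$ or not). Your closing remark about the Galois-theoretic reformulation is essentially the paper's own argument, so you have in effect supplied both. Either version is valid; the symmetric-function phrasing buys a slightly more uniform statement, while the paper's phrasing makes explicit the quadratic extension $\Q_q(\alpha_0)/\Q_q(\gamma_0)$, which is reused elsewhere (e.g.\ in the tower of fields in Proposition~\ref{prop:gamma0}).
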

\begin{proof}
    Expanding the expression for $\phi(\alpha_0, -\beta_0)$, we get that 
    \begin{equation}
    \label{eqn:phialpha0beta0}
    \phi(\alpha_0, -\beta_0) = \sum_{i=1}^{r-1} \alpha_0^i\beta_0^{r-1-i}.
    \end{equation}
    Clearly, if $\alpha_0, \beta_0 \in \Q_q(\gamma_0)$, the statement of the lemma is true, so suppose that ${\alpha_0, \beta_0 \not \in \Q_q(\gamma_0)}$. 
    By \eqref{eqn:rootproduct} and the definition of $\gamma_0$, it follows that $\alpha_0$ and $\beta_0$ are roots of the polynomial
    \[(x-\alpha_0)(x-\beta_0) = x^2 - \gamma_0x + z \in \Q_q(\gamma_0)[x],\]
    so the extension $\Q_q(\alpha_0)/\Q_q(\gamma_0)$ is quadratic and the only non-trivial element $\sigma$ of its Galois group permutes $\alpha_0$ and $\beta_0$. It is then clear by the expression in \eqref{eqn:phialpha0beta0} that $\sigma$ leaves $\phi(\alpha_0, -\beta_0)$ invariant and, consequently, $\phi(\alpha_0, -\beta_0) \in \Q_q(\gamma_0)$.
\end{proof}

Recall from Section \ref{Sec:preliminaries} that we defined $\omega_j=\zeta_r^j+\zeta_r^{-j}$ for all $1\le j\le r-1$. To ease the notation, let $\omega:=\omega_1$. Similarly, we set  $\tau_j=\zeta_r^j-\zeta_r^{-j}$ and $\tau:=\tau_1$. Throughout the rest of the paper, we let $L$ be the splitting field of $F(x)$.

\begin{lemma}\label{lemma:inclusion}
Suppose $\zeta_r\in\Q_q$. Then $\Q_q(\gamma_{0})\subseteq\Q_q(\gamma_{j})$ for all $0\le j\le r-1$.
\end{lemma}
\begin{proof}
The statement follows from 
the equality $\gamma_0=(\gamma_{j}+\gamma_{-j})/{\omega_{j}}$.
\end{proof}
\begin{prop}\label{prop:splittingfield}
	The splitting field of $F(x)$ equals $L =\Q_q(\tau\sqrt{\Delta},\gamma_{0})$. Moreover, if $\zeta_r\in\Q_q$, $L=\Q_q(\tau\sqrt{\Delta},\gamma_{j})$ for all $0\le j \le r-1$.
\end{prop}
\begin{proof}
Set $L':=\Q_q(\tau\sqrt{\Delta},\gamma_{0})$ and let $L$ be the splitting field of the polynomial $F(x)$. To show that $L=L'$, we will prove the double inclusion.  Firstly, let us see that $L'\subseteq L$. By Proposition \ref{prop:roots}, $\gamma_0$ is a root of $F(x)$ and so $\gamma_0 \in L$. 
To see that $\tau\sqrt{\Delta}\in L$, we first note that 
\begin{equation}
\label{eqn:aux1}\tau_j(\alpha_0-\beta_0)=\gamma_j-\gamma_{-j}\in L.
\end{equation} On the other hand, by \eqref{eq:alpha-beta}, we know that $-\sqrt{\Delta}=\alpha_0^r-\beta_0^r$. This, combined with the fact that
	\begin{equation}
        \label{eqn:aux2}
        -\tau_j\sqrt{\Delta}=\tau_j(\alpha_0^r-\beta_0^r) = \tau_j(\alpha_0-\beta_0)\phi(\alpha_0,-\beta_0),
	\end{equation}
	implies that $\tau_j\sqrt{\Delta}\in L$ if and only if $\phi(\alpha_0,-\beta_0)\in L$. By Lemma \ref{lemma:rootsingamma0}, we have that $\phi(\alpha_0,-\beta_0)\in \Q_q(\gamma_0)\subseteq L$, thereby proving that $L' \subseteq L$. 

    \vspace{5pt}

    We now show that $L \subseteq L'$. It is not hard to see that  that $\gamma_j + \gamma_{-j} = \omega_j\gamma_0$.    This, combined with \eqref{eqn:aux1}, yields that 
    \begin{equation}
    \label{eqn:gammaj}
    \gamma_j = \frac{1}{2}\left(\omega_j\gamma_0 + \tau_j(\alpha_0-\beta_0)\right).
    \end{equation}
    We claim that both $\omega_j$ and $\tau_j(\alpha_0-\beta_0)$ belong to $L'$. Firstly, we see that 
    \begin{equation}
    \label{eqn:cuentatau}
    (\tau\sqrt{\Delta})^2 = \Delta(\omega_2-2) \quad \text{so that} \quad \omega_2 = \frac{(\tau\sqrt{\Delta})^2}{\Delta} + 2 \in L'.
    \end{equation}
    Since $\Q_q(\omega_j) = \Q_q(\omega_2)$ for any $1 \le j \le (r-1)/2$, it readily follows that $\omega_j \in L'$. Consequently, we have that $\omega_j, \gamma_0 \in L'$, and it is therefore sufficient to show that $\tau_j(\alpha_0-\beta_0) \in L'$. {Let us show that $\tau_j/\tau \in L'$. If $\zeta_r \in \Q_q$, this is clear since $\tau_j, \tau \in \Q_q \subseteq L'$. Otherwise, let $\sigma\in\Gal(\Q_q(\zeta_r)/\Q_q)$ with $\sigma(\zeta_r) = \zeta_r^{-1}$. Then,}
    $\tau_j/\tau$ is fixed by $\sigma$. By Galois theory, this shows that $\tau_j/\tau \in \Q_q(\omega) \subseteq L'$. Then, we have that 
    \[\tau_j\sqrt{\Delta} = \left(\frac{\tau_j}{\tau}\right)(\tau\sqrt{\Delta}) \in L',
    \]
    and by \eqref{eqn:aux2}, along with Lemma \ref{lemma:rootsingamma0}, we have that $\tau_j(\alpha_0-\beta_0) \in L'$, as we wanted to show. 

    \vspace{5pt}

    To finish the proof, it only remains to prove that $L=\Q_q(\tau\sqrt{\Delta},\gamma_{j})$ for each $j$, when $\zeta_r\in\Q_q$. Since $L=\Q_q(\tau\sqrt{\Delta},\gamma_{0})$, it is included in $\Q_q(\tau\sqrt{\Delta},\gamma_{j})$, by Lemma \ref{lemma:inclusion}. The other inclusion follows straightforward from the definition of $L$ and the fact that we already proved that $\tau\sqrt{\Delta}\in L$.
\end{proof}

\begin{remark}
    \label{rmk:splittingfield}
    We note that, during the proof of Proposition \ref{prop:splittingfield}, we actually showed that $\omega \in \Q_q(\tau\sqrt{\Delta})$. This is also true in \cite[Theorem 4.19]{CelineClusters}, but they do not need to use that fact at any point during their paper. 
    
    For us, this will be relevant (see for example, Proposition \ref{prop:gamma1}) and, while it is redundant, we will often write $L = \Q_q(\omega, \tau\sqrt{\Delta}, \gamma_0)$ to make explicit the fact that $\omega$ belongs to the splitting field of $F(x)$.
\end{remark}

For the remainder of the paper, let $E = \Q_q(\tau\sqrt{\Delta})$. From Propositions \ref{prop:gamma0} and \ref{prop:splittingfield}, it is clear that $E$ is the splitting field of $F(x)$ whenever $F(x)$ is reducible over $\Q_q$ and $\zeta_r\notin \Q_q$. In the following proposition, we show that if $q = r$ and $F(x)$ is reducible over $\Q_r$, $E$ is the field obtained by adjoining any root $\gamma_i\neq\gamma_0$ of $F(x)$.

\begin{prop} \label{prop:gamma1} Suppose that $q=r$ and $F(x)$ is reducible over $\Q_r$. Then $E=\Q_r(\gamma_1)$.
\end{prop}

\begin{proof}

Since $E$ is the splitting field of $F(x)$, $E/\Q_r$ is clearly a Galois extension, and we let $G$ denote its Galois group. From the definition of $\tau_j$, it is clear that $\tau_{r-j} = \tau_{-j} = -\tau_j$. In addition, an identical computation to \eqref{eqn:cuentatau} shows that 
\begin{equation}
\label{eqn:taujdelta}
\tau_j\sqrt{\Delta} = \pm \sqrt{\omega_{2j}-2}\sqrt{\Delta},
\end{equation}
where, by our prior discussion, $\tau_j$ and $\tau_{-j}$ have different signs. 
Now, let $S$ be the set of endomorphisms of $E$ defined by 
{\small
\[S =\left\{\sigma_{j, \pm}: E \to E \mid \sigma_{j, \pm}|_{\Q_r} = \text{id}_{\Q_r}, \sigma_{j,\pm}(\omega) = \omega_j \text{ and } \sigma_{j,\pm}(\tau\sqrt{\Delta}) = \tau_{\pm j}\sqrt{\Delta} \text{ with } 1\le j\le \frac{r-1}{2}\right\}.
\]
}
We claim that $G \subseteq S$. To see this, let $\sigma \in G$. Since $\Q_r(\omega)$ is a subfield of $E$, we have that $\sigma\mid_{ \Q_r(\omega)}$ needs to belong to $\Gal(\Q_r(\omega)/\Q_r)$ and, consequently, $\sigma(\omega) = \omega_j$ for some $j = 1, \dots, (r-1)/2$. From \eqref{eqn:cuentatau}, we see that
\[(\sigma(\tau\sqrt{\Delta}))^2 = \Delta(\sigma(\omega_2)-2) = (\omega_{2j}-2)\Delta,
\]
where the last equality follows since $\omega_{2j} = \omega_{j}^2-2$ and so $\sigma(\omega_2) = \sigma(\omega)^2-2 = \omega_j^2-2 = \omega_{2j}$. Thus, we have that 
\[\sigma(\tau\sqrt{\Delta}) = \pm \sqrt{\omega_{2j}-2}\sqrt{\Delta} = \tau_{\pm j}\sqrt{\Delta},
\]
where the last equality follows from \eqref{eqn:taujdelta} and the subsequent discussion. Therefore, ${\sigma = \sigma_{j, \pm} \in S}$, as desired. Next, we show that the set 
\[X = \{\sigma_{j, \pm}(\gamma_1) \mid \sigma_{j, \pm} \in S\}\]
has cardinality equal to $r-1$. By the expression of $\gamma_1$ in \eqref{eqn:gammaj} combined with \eqref{eqn:aux2}, we note that 
\[\sigma_{j, \pm}(\gamma_1) = \frac{1}{2}\left(\omega_j\gamma_0 - \frac{\tau_{\pm j}\sqrt{\Delta}}{\phi(\alpha_0, -\beta_0)}\right) = \frac{1}{2}\left(\omega_j\gamma_0 + \tau_{\pm j}(\alpha_0-\beta_0)\right),
\]
where we used the facts that, since $F(x)$ is reducible over $\Q_r$, both $\gamma_0$ and $\phi(\alpha_0, -\beta_0)$ lie in $\Q_r$ (see Proposition \ref{prop:gamma0}, Remark \ref{rem:k0} and Lemma \ref{lemma:rootsingamma0}). Now, since $\omega_j = \omega_{-j}$, it follows that $\sigma_{j, \pm}(\gamma_1) = \gamma_{\pm j}$, thereby proving that $\#X = r-1$. 

\vspace{5pt}

Finally, we note that, since $\Q_r(\omega) \subseteq E$, $[E:\Q_r]$ is divisible by $(r-1)/2$, and by \eqref{eqn:cuentatau}, it follows that either $[E:\Q_r] = r-1$ or $[E:\Q_r] = (r-1)/2$. In the first case, we have that $G = S$ and, since $\#X = r-1$, the set 
\[\mathcal{R}' = \{\gamma_1, \dots, \gamma_{r-1}\}\]
 has only one orbit under the action of $G$. Consequently, the polynomial $F(x)/(x-\gamma_0) \in \Q_r[x]$ is irreducible and so $[\Q_r(\gamma_1):\Q_r] = r-1$ and $E = \Q_r(\gamma_1)$. If $[E:\Q_r] = (r-1)/2$, $G$ is a subset of $S$ of size $(r-1)/2$ and an identical argument shows that $\mathcal{R}'$ has two orbits under the action of $G$. Therefore, there are two irreducible factors of $F(x)/(x-\gamma_0)$ of degree $(r-1)/2$ and it follows that $[\Q_r(\gamma_1):\Q_r] = (r-1)/2 = [E:\Q_r]$, whence $E = \Q_r(\gamma_1)$, as desired.
\end{proof}

\subsection{Ramification indices}

In this section, we shall compute the ramification index of $L$, and we will prove that the extension $L/\Q_q$ is wildly ramified precisely when $F(x)$ is irreducible over $\Q_q$. The following lemma will allow us to compute the wild conductor in those situations. 
For this, we recall that $\Q_q(\alpha_0^r)=\Q_q(\beta_0^r)=\Q_q(\sqrt{\Delta})$.

\begin{lemma}
    \label{lemma:valuationdiscriminants}
    Suppose that $F(x)$ is irreducible over $\Q_q$, and let $\delta = v_q(\Delta_{\Q_q(\gamma_0)/\Q_q})$. Then the following holds.
    \begin{enumerate}
        \item If $q \mid \Delta$ and $q \mid s$, then $\delta = rv_q(r) + r-1$.
        \item If $q = r$ and $r \nmid \Delta$, then $\delta = r$.
        \item If $q = r$, $r \nmid s$ and $v_r(\Delta) = 1$, then $\delta=\frac{3r-1}{2}$.
        \item If $q = r$, $r \nmid s$ and $v_r(\Delta) = 2$, then $\delta=r$.
\end{enumerate}
\end{lemma}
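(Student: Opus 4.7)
The plan is to compute $\delta$ in each case by finding a generator of $L = \Q_q(\gamma_0)$ over $\Q_q$ whose minimal polynomial is Eisenstein, so that $\delta$ reads off as the $q$-adic valuation of its discriminant; if $\gamma_0$ itself fails this role, to track the index $[\mathcal{O}_L : \Z_q[\gamma_0]]$ separately via $v_q(\mathrm{disc}(F)) = \delta + 2v_q([\mathcal{O}_L : \Z_q[\gamma_0]])$. A useful preliminary observation is that $F(x) = D_r(x, z) + s$, where $D_r$ is the $r$th Dickson polynomial characterised by $D_r(a + b, ab) = a^r + b^r$; this follows from Proposition \ref{prop:roots} via $F(\gamma_j) = \alpha_j^r + \beta_j^r + s$. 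Since $D_r(u, v) \equiv u^r \pmod r$ in $\Z[u, v]$, Fermat yields $\bar F(x) = (x + s)^r$ in $\mathbb{F}_r[x]$, controlling the reduction of $F$ at $r$.

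For Case 1 with $q \neq r$, Theorem \ref{thm:clustersQ}(2) gives $v_q(\gamma_i - \gamma_j) = v_q(s)/r$ for all $i \neq j$; condition \ref{item2} renders this non-integral and makes $L/\Q_q$ totally tamely ramified of degree $r$. Tame ramification theory then gives $\delta = r - 1 = rv_q(r) + r - 1$. For $q = r$ in Case 1, $r \mid s$ forces $r \mid z$ (otherwise $v_r(\Delta) = 0$), so every non-leading non-constant coefficient of $F$ is divisible by $r$. When $v_r(s) = 1$, $F$ is Eisenstein outright and $\gamma_0$ is a uniformiser, giving $\delta = v_r(\mathrm{disc}(F)) = r + (r-1)v_r(s) = 2r - 1$ from the cluster picture of Theorem \ref{thm:clustersQ}(4). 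For $v_r(s) > 1$ (still coprime to $r$), I would construct a uniformiser $\pi_L = \gamma_0^u r^v$ via Bezout in $uv_r(s) + vr = 1$, confirm $\delta = 2r - 1$ from its minimal polynomial, and recover the matching index $r^{(r-1)(v_r(s)-1)/2}$.

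For Cases 2 and 3, $\bar F = (x + s)^r$ places $\gamma_0 + s$ in the maximal ideal of $L$, and $G(x) := F(x - s)$ is its minimal polynomial. Using the Dickson expansion $D_r(u, v) = u^r + rQ(u, v)$ for some $Q \in \Z[u, v]$, one has $F(-s) = r\bigl(Q(-s, z) - (s^r - s)/r\bigr)$, and the hypothesis $v_r(\Delta) \le 1$ ensures the bracketed integer is a unit modulo $r$ (checkable by direct expansion, since the obstruction $s^2 \equiv 4z^r$ to cancellation between the Dickson correction and the Fermat quotient is suppressed when $v_r(\Delta) \le 1$). Consequently $G$ is Eisenstein, $\gamma_0 + s$ is a uniformiser, $\Z_r[\gamma_0] = \mathcal{O}_L$, and $\delta = v_r(\mathrm{disc}(F))$; the cluster pictures of Theorem \ref{thm:clustersQ}(3) and (5) then yield $\delta = r$ in Case 2 and $\delta = (3r-1)/2$ in Case 3.

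Case 4 is the main obstacle. Here the cluster picture gives $v_r(\mathrm{disc}(F)) = 2r - 1$ but the claimed $\delta$ is only $r$, so $\Z_r[\gamma_0]$ must have index $r^{(r-1)/2}$ in $\mathcal{O}_L$. The underlying reason is that $v_L(\gamma_0 + s) = 2$ rather than $1$: the hypothesis $v_r(\Delta) = 2$ forces $s^2 \equiv 4z^r \pmod{r^2}$ and produces an extra factor of $r$ in $F(-s)$ via the Dickson expansion. A genuine uniformiser of $L$ is then $\pi_L = r/(\gamma_0 + s)^{(r-1)/2}$, of valuation $r - 2\cdot(r-1)/2 = 1$, and the discriminant of its minimal polynomial over $\Q_r$ computes $\delta = r$. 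The most delicate part of the argument is verifying that this excess valuation of $\gamma_0 + s$ accounts exactly for the index $r^{(r-1)/2}$, and this is where an explicit Eisenstein computation for $\pi_L$ cannot be avoided.
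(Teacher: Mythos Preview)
Your approach is genuinely different from the paper's and largely sound, but Case~4 has a real gap that you yourself flag without resolving.

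\textbf{Comparison.} The paper never works directly with $F$ over $\Q_q$. Instead it computes $\Delta_{M/H}$ for the tower $\Q_q \subseteq H=\Q_q(\sqrt{\Delta}) \subseteq M=\Q_q(\alpha_0)$, using Eisenstein polynomials for $\alpha_0$ over $H$ that were \emph{already exhibited} in the proof of Lemma~\ref{lemma:reducible}; it then descends to $\Q_q(\gamma_0)$ via the relative discriminant formula $\Delta_{K_3/K_1}=\Delta_{K_2/K_1}^{[K_3:K_2]}\norm_{K_2/K_1}(\Delta_{K_3/K_2})$ applied to both towers. Your route---finding an Eisenstein generator for $\Q_q(\gamma_0)/\Q_q$ directly, via the Dickson identity $F(x)=D_r(x,z)+s$ and the translate $G(x)=F(x-s)$---is more elementary and avoids the relative discriminant machinery, but forces you to compute $v_r(F(-s))$ exactly in each case.

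\textbf{The gap.} In Cases~2 and~3 your claim $v_r(F(-s))=1$ is correct and can be extracted cleanly: by Galois symmetry all $v_r(\gamma_i+s)$ are equal, the cluster depth bounds them above by $\tfrac{1}{r-1}+\tfrac{v_r(\Delta)}{2r}$, and integrality of $v_r(F(-s))=r\cdot v_r(\gamma_0+s)$ forces the value~$1$ when $v_r(\Delta)\le 1$. But in Case~4 the same bound gives only $v_r(F(-s))\in\{1,2\}$, and you need it to equal~$2$. Your justification (``$s^2\equiv 4z^r\pmod{r^2}$ produces an extra factor of~$r$ via the Dickson expansion'') conflates $s^2-4z^r=\Delta$ with the quantity $s^2-4z$ that actually appears when you expand $D_r(-s,z)$ through its quadratic resolvent, so the argument as written does not go through. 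Establishing $v_r(F(-s))=2$ here requires an honest congruence computation of the same flavour as Lemma~\ref{lemma:srs}; without it, neither the claim $v_L(\gamma_0+s)=2$ nor the index $r^{(r-1)/2}$ is justified. The paper sidesteps this entirely: since $H/\Q_r$ is unramified when $v_r(\Delta)=2$, it simply reuses the Case~2 computation for $M/H$ and reads off $\delta=r$ from the tower.

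Your Case~1 with $q=r$ and $v_r(s)>1$ is also only sketched; the B\'ezout uniformiser works, but you should note that irreducibility of $F$ forces its Newton polygon to be the single segment of slope $-v_r(s)/r$, which is what makes $v_L(\gamma_0)=v_r(s)$ and hence $\gamma_0^u r^v$ a uniformiser.
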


\begin{proof}

    Throughout all cases of this proof and in order to lighten notation, we will let ${H = \Q_q(\sqrt{\Delta}) = \Q_q(\alpha_0^r)}$ and $M = \Q_q(\alpha_0)$. In addition, we will make extensive use of the following identity for the growth of the discriminant in field extensions $K_1 \subseteq K_2 \subseteq K_3$, which is \cite[Corollary 2.10]{Neukirch}:
    \begin{equation}
        \label{eqn:discriminantgrowth}
        \Delta_{K_3/K_1} = \Delta_{K_2/K_1}^{[K_3:K_2]} \norm_{K_2/K_1}\left(\Delta_{K_3/K_2}\right).
    \end{equation}
    Finally, we shall also exploit the fact that if a quadratic extension of local fields $K_2/K_1$ is ramified and $v_\id{q}(\cdot)$ is the normalized valuation on $K_1$, then $v_{\id{q}}(\Delta_{K_2/K_1}) = 1$. 

    \vspace{5pt}

    (1) Since $q \mid \Delta$ and $q \mid s$, it follows that $q \mid z = \alpha_0\beta_0$ and Lemma \ref{lemma:divisibility} yields that $v_q(\Delta)$ is even, so the extension $H/\Q_q$ is unramified.  In addition, by Lemma \ref{lemma:reducible}, either $r \nmid v_q(\alpha_0^r)$ or $r \nmid v_q(\beta_0^r)$. Without loss of generality, we shall suppose that $r \nmid v_q(\alpha_0^r)$. 

    Then, there exists $k \in \Z$ such that $kv_q(\alpha_0^r)\equiv 1\pmod r$. Equivalently, there exists $a \in \Z$ with $kv_q(\alpha_0^r)=ar+1$. Let $\pi={\alpha_0^k}/{q^a}\in M$. Then, we check that $v_q(\pi)={1}/{r}$ and, consequently, the polynomial $P(x)=x^r-\pi^r \in H[x]$ is Eisenstein. By \cite[Chapter 1, Theorem 1]{Frohlich}, $\cO_M=\cO_H[\pi]$ and  $\Delta_{M/H}$ is equal to the discriminant of $P(x)$, up to units. As a result, we find that 
    \[v_q(\Delta_{M/H}) = rv_q(r) + r-1.
    \]
    If $\sqrt{\Delta} \in \Q_q$, $H = \Q_q$ {and $M = \Q_q(\gamma_0)$}, so the previous expression directly gives the value of $\delta$ in the statement of the lemma. If $\sqrt{\Delta} \not \in \Q_q$, we see that the extension $H/\Q_q$ is unramified and quadratic, and then \eqref{eqn:discriminantgrowth} gives that $v_q(\Delta_{M/\Q_q}) = 2(rv_q(r) + r-1)$. By studying the tower of field extensions 
    \begin{equation}
        \label{eqn:towerfields}
    \Q_q \subseteq^{r} \Q_q(\gamma_0) \subseteq^{2} M,
    \end{equation}
    we note that the extension $M/\Q_q(\gamma_0)$ is unramified (since the ramification degree of $M/\Q_q$ is $r$ by our previous argument) and, consequently, $v_q(\norm_{\Q_q(\gamma_0)/\Q_q}(\Delta_{M/\Q_q(\gamma_0)})) = 0$. Therefore, \eqref{eqn:discriminantgrowth} yields that 
    \[v_q(\Delta_{\Q_q(\gamma_0)/\Q_q}) = \frac{v_q(\Delta_{M/\Q_q})}{2} = rv_q(r) + r-1.\]

    (2) If $r \nmid \Delta$, we see that the extension $H/\Q_r$ is unramified and from \eqref{eq:alpha-beta} we get that either $r \nmid \alpha_0$ or $r \nmid \beta_0$, so either $\alpha_0$ or $\beta_0$ is a unit. From \cite[Theorem 4.21]{CelineClusters}, we see that $v_r(\Delta_{M/H}) = r$ and we check that $v_r(\Delta_{\Q_r(\gamma_0)/\Q_r}) = r$ by using \eqref{eqn:discriminantgrowth} as in case (1). 

    \vspace{5pt}

    (3) Now, if $r \nmid s$ and $v_r(\Delta) = 1$, we have that $\sqrt{\Delta} \not \in \Q_r$ and the extension $H/\Q_r$ is ramified of degree $2$. Also, note that we found an Eisenstein polynomial for the extension $M/H$ in the proof of case (1) of Lemma \ref{lemma:reducible}, which was a translation of the polynomial $f(x) = x^r + \alpha_0^r$. In particular, if we set $\id{r}$ to be the unique prime ideal of $H$ above $r$, this means that 
    \[v_\id{r}(\Delta_{M/H}) = v_\id{r}(\Delta_{f(x)}) = 2r.
    \]
    Applying \eqref{eqn:discriminantgrowth}, we find that 
        \[v_r(\Delta_{M/\Q_r}) = rv_r\left(\Delta_{H/\Q_r}\right) + v_{\id{r}}\left(\Delta_{M/H}\right) = rv_r(\Delta_{H/\Q_r})+2r = 3r,        
        \]
        since the extension $H/\Q_r$ is totally ramified and, consequently, 
        \[v_r\left(\norm_{H/\Q_r}\left(\Delta_{M/H}\right)\right) = v_\id{r}\left(\Delta_{M/H}\right).
        \]
        By using the same formula 
%    this allows us to find that $v_r(\Delta_{M/\Q_r}) = 3r$ and, using the same formula 
for the tower extension in \eqref{eqn:towerfields}, along with the fact that the extension $M/\Q_q(\gamma_0)$ is totally ramified, we find that 
    \[v_r\left(\Delta_{M/\Q_r}\right) = 2v_r(\Delta_{\Q_r(\gamma_0)/\Q_r})+v_\id{r}(\Delta_{M/\Q_r(\gamma_0)}) = 2v_r(\Delta_{\Q_r(\gamma_0)/\Q_r}) + 1,\]
    and so $v_r(\Delta_{\Q_r(\gamma_0)/\Q_r}) = (3r-1)/2$, as desired. \\

    (4) The proof is identical to part (2), since the extension $H/\Q_r$ is unramified and either $\alpha_0$ or $\beta_0$ is a unit.
\end{proof}

\begin{coro}
\label{coro:totallyramified}
{Suppose that $F(x)$ is irreducible over $\Q_q$. Then, both $\Q_q(\alpha_0)/\Q_q(\alpha_0^r)$ and $\Q_q(\gamma_0)/\Q_q$ are totally ramified extensions of degree $r$.}
\end{coro}

\begin{proof}
    Since the polynomial $F(x)$ is irreducible, the extension $\Q_q(\gamma_0)/\Q_q$ has degree $r$. In addition, it is ramified since, by Lemma \ref{lemma:valuationdiscriminants}, the discriminant of the extension is divisible by $q$. Since $r$ is prime, the extension is necessarily totally ramified. Finally, the same fact is true about the extension $\Q_q(\alpha_0)/\Q_q(\alpha_0^r)$, by studying the tower of field extensions $\Q_q \subseteq^2 \Q_q(\alpha_0^r) \subseteq^r \Q_q(\alpha_0)$ and comparing it to \eqref{eqn:towerfields}.
\end{proof}

Now, we may finally proceed to compute the ramification index of $L/\Q_q$. We begin by first studying ramification of the field $E$, as follows. 

\begin{lemma} \label{lemma:indexram}
Let $E=\Q_q(\omega,\tau\sqrt{\Delta})$. The ramification index $e_{E/\Q_q(\omega)}$ of $E/\Q_q(\omega)$ is des\-cribed as follows.
\[
\begin{array}{|c||l||l|}
\hline
 q=r &\text{Condition} & e_{E/\Q_q}  \\ \hline
\times & 2\mid v_q(\Delta) & 1 \\ \hline
\times & 2\nmid v_q(\Delta) & 2\\ \hline
\checkmark & 2\mid\frac{r-1}{2}v_r(\Delta) & 2\\ \hline
\checkmark & 2\nmid\frac{r-1}{2}v_r(\Delta) & 1\\ \hline
\end{array}
\]

\end{lemma}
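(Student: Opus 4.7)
The plan is to present $E/\Q_q(\omega)$ as a (possibly trivial) quadratic extension obtained by adjoining a single square root, and then apply the standard parity criterion: a quadratic extension $F(\sqrt{a})/F$ of local fields of odd residue characteristic is ramified if and only if $v(a)$ is odd. This reduces the lemma to computing one valuation on the nose.

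First I would observe that $E=\Q_q(\omega)(\tau\sqrt{\Delta})$ and compute
\[
(\tau\sqrt{\Delta})^2 \;=\; \tau^2\,\Delta \;=\; (\omega_2-2)\,\Delta \;=\; (\omega^2-4)\,\Delta,
\]
using $\omega_2=\omega^2-2$ as already exploited in \eqref{eqn:cuentatau}. Hence $E$ arises from $\Q_q(\omega)$ by adjoining $\sqrt{(\omega^2-4)\Delta}$, so $e_{E/\Q_q(\omega)}=2$ exactly when $v_{\Q_q(\omega)}\!\big((\omega^2-4)\Delta\big)$ is odd and equals $1$ otherwise.

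Next I would compute $v_{\Q_q(\omega)}(\omega^2-4)$ from the cyclotomic identities
\[
\omega-2 \;=\; \zeta_r^{-1}(1-\zeta_r)^2, \qquad \omega+2 \;=\; \zeta_r^{-1}(1+\zeta_r)^2,
\]
which give $\omega^2-4=\zeta_r^{-2}(1-\zeta_r)^2(1+\zeta_r)^2$. Since $\zeta_r$ and $1+\zeta_r$ are units at every finite prime, the valuation is carried entirely by $(1-\zeta_r)^2$. For $q\neq r$ this is a unit, so $v_{\Q_q(\omega)}(\omega^2-4)=0$. For $q=r$, using that $1-\zeta_r$ is a uniformizer of $\Q_r(\zeta_r)$ and that $\Q_r(\zeta_r)/\Q_r(\omega)$ is totally ramified of degree $2$, I obtain $v_{\Q_r(\omega)}(\omega^2-4)=1$.

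Finally I would combine this with the valuation of $\Delta$. When $q\neq r$, the extension $\Q_q(\omega)/\Q_q$ is unramified, hence $v_{\Q_q(\omega)}\!\big((\omega^2-4)\Delta\big)=v_q(\Delta)$, whose parity gives the first two rows of the table. When $q=r$, the extension $\Q_r(\omega)/\Q_r$ is totally ramified of degree $(r-1)/2$, so
\[
v_{\Q_r(\omega)}\!\big((\omega^2-4)\Delta\big)\;=\;\tfrac{r-1}{2}\,v_r(\Delta)+1,
\]
which is odd precisely when $\tfrac{r-1}{2}v_r(\Delta)$ is even; this yields the last two rows. I do not anticipate a genuine obstacle: the only care required is bookkeeping of the ramification indices among $\Q_q$, $\Q_q(\omega)$ and $\Q_q(\zeta_r)$, all of which are readily available from basic cyclotomic theory.
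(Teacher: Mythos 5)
Your proposal is correct and follows essentially the same route as the paper: identify $E/\Q_q(\omega)$ as a (possibly trivial) quadratic extension generated by a square root of $(\omega^2-4)\Delta$, and read off the ramification index from the parity of the valuation, using the ramification data of $\Q_q(\omega)/\Q_q$ and the local behaviour of $\omega^2-4$. The only difference is that you supply the cyclotomic factorisation $\omega^2-4=\zeta_r^{-2}(1-\zeta_r)^2(1+\zeta_r)^2$ to justify $v_{\Q_q(\omega)}(\omega^2-4)=\delta_{q,r}$, a fact the paper states without proof while citing Cassels for the parity criterion.
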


\begin{proof}
The minimal polynomial of $E$ over $\Q_q(\omega)$ divides
\begin{equation}
\label{eqn:definingpoly}
x^2 - \Delta\tau^2 = x^2 + \Delta(4 - \omega^2) \in \Q_q(\omega)[x].
\end{equation}
In particular, this implies that $e_{E/\Q_q(\omega)}\le 2$. Moreover, by \cite[Theorem 6.1]{Cassels},  $E/\Q_q(\omega)$ will be unramified if and only if $v_\mathfrak{q}(-4\Delta(\omega^2-4))$ is even, where $\id{q}$ is the unique prime in $\Q_q(\omega)$ above $q$, and $v_\id{q}$ its respective valuation. Then, the result follows from equation (\ref{eq:idealvaluation}) and the fact that $v_\id{q}(\omega^2-4)=\delta_{q,r}$, where $\delta$ is the Kronecker delta.
\end{proof}

\begin{thm}
\label{thm:ramification}
The ramification index $e_{L/\Q_q}$ of $L/\Q_q$ is equal to 
\begin{equation}\label{eq:ramindex}
e_{L/\Q_q}=\varepsilon_1\varepsilon_2e_{E/\Q_q(\omega)},
\end{equation}
where 
\[\varepsilon_1=\begin{cases}
\frac{r-1}{2} & \text{if } q=r,\\
1  & \text{if } q\neq r
\end{cases}
\quad \text{ and } \quad 
\varepsilon_2=\begin{cases}
r & \text{if } F(x) \text{ is irreducible over } \Q_q,\\
1  & \text{if } F(x) \text{ is reducible over } \Q_q.
\end{cases}\]
\end{thm}
\begin{proof}
Let $\varepsilon_1:=e_{\Q_q(\omega)/\Q_q}$ and $\varepsilon_2:=e_{L/E}$. Then, it is clear that equation (\ref{eq:ramindex}) holds. To prove the formula for the values of $\varepsilon_1$ note that the extension $\Q_q(\omega)/\Q_q$ is unramified (resp.\ totally ramified of degree $(r-1)/2$) if $q\neq r$ (resp.\ $q=r$).

%Together with Lemma \ref{lemma:indexram}, we have the ramification index of $E/\Q_q$, and all that remains is to compute the index of $L/E$.

Suppose $F(x)$ is reducible over $\Q_q$.  By Proposition \ref{prop:gamma0}, there exists $0\le k_0 \le r-1$ such that $\gamma_{k_0}\in\Q_q$. If $\zeta_r\notin\Q_q$ then $k_0=0$, by Remark \ref{rem:k0}. Then, Proposition \ref{prop:splittingfield} implies $L=E$. On the other hand, if $\zeta_r\in\Q_q$  we also have $L=E$ by Proposition  \ref{prop:splittingfield}, taking $j=k_0$ in the statement. 

Therefore,  it only remains to consider the case where $F(x)$ is irreducible, to compute $e_{L/E}$. In such a case, by Corollary~\ref{coro:totallyramified} we have that $e_{\Q_q(\alpha_0)/\Q_q(\alpha_0^r)}=r$.
Using the chain of field extensions
\[\Q_q\subseteq\Q_q(\alpha_0^r)\subseteq\Q_q(\alpha_0)\subseteq\Q_q(\zeta_r,\alpha_0)\]
we get that $r$ divides the ramification index $\Q_q(\zeta_r,\alpha_0)/\Q_q$. On the other hand, using the chain of field extensions 
\[\Q_q\subseteq E\subseteq L\subseteq \Q_q(\zeta_r,\alpha_0)\]
we get that $r\mid e_{L/E}$, since $r$ is prime and does not divide the ramification index of $E/\Q_q$ (less than or equal to $r-1$) neither that of $\Q_q(\zeta_r,\alpha_0)/L$ (less than or equal to $2$). Therefore, since $e_{L/E}\le r$, we get $e_{L/E}=r$.
\end{proof}

\section{\texorpdfstring{Conductor of $C(z,s)$}{Conductor of C(z,s)}}
\label{Sec:conductor}

In this section, we finish the computation of the conductor. We also performed comprehensive numerical verification, utilizing the computer algebra package \verb|Magma| \cite{Magma}, for  conductor computations, along with Tim Dokchitser’s cluster picture implementation. All codes and computations are available in \cite{github}.

\vspace{5pt}

Let $C=C(z,s)$ be the curve defined in \eqref{eqn:C}, where $z$ and $s$ satisfy the conditions \ref{item1} and \ref{item2}.  As in Definition \ref{def:conductor}, we denote the conductor exponent of $C/\Q_q$ as $\mathfrak{n}(C/\Q_q)$. Firstly, we will compute the tame part of the conductor.

\begin{prop} \label{prop:conductorFred} Let $q$ be an odd prime such that $q\mid rs$. The tame component is equal to  $\mathfrak{n}_\text{tame}(C/\Q_q)=r-1$. In particular,  if $F(x)$ is reducible over $\Q_q$, then $\mathfrak{n}(C/\Q_q)=r-1$.
\end{prop}

\begin{proof}

First of all, note that the last part of the statement follows from the fact that if $F(x)$ is reducible over $\Q_r$, Theorem \ref{thm:ramification} gives that $r$ does not divide $e_{L/\Q_r}$ and therefore the extension $L/\Q_r$ is tame.

 To compute the tame conductor, we will apply Theorem \ref{thm:formulascluster}, where $g=(r-1)/2$, by Remark \ref{rmk:genus}. The main idea is that to prove the proposition, it is sufficient to prove that $\#U/I_{\Q_q}=\#V/I_{\Q_q}$, and we shall do so in all cases where $q \mid rs$.

\vspace{3pt}

\textbf{(1) Case} $\boldsymbol{q \neq r}$: By the assumptions in the proposition, we have that  $q \mid s$. From case (2) of Theorem \ref{thm:clustersQ}, we see that the only odd clusters, besides $\mathcal{R}$, are the singletons $\{\gamma_i\}$ for $0\le i \le r-1$ and it is clear that $P(\{\gamma_i\})$ is the whole cluster $\mathcal{R}$, with depth $d_\mathcal{R} = {v_q(s)}/{r}$ and $\tilde{\lambda}_\mathcal{R} = {v_q(s)}/{2}$. Since the whole cluster $\mathcal{R}$ is stable under the action of the inertia group $I_{\Q_q}$, we compute
\begin{equation}
\label{eqn:xilambda}\xi_\mathcal{R}(\tilde{\lambda}_\mathcal{R}) = \max\left\{-v_2\left(\frac{v_q(s)}{2}\right), 0\right\}
= \begin{cases} 1 & \text{ if } v_q(s) \text{ is odd,} \\
0 & \text{ if } v_q(s) \text{ is even.} 
\end{cases}
\end{equation}
Since $d_\mathcal{R} = v_q(s)/r$, and $r$ is odd, we always find that $\xi_\mathcal{R}(d_\mathcal{R}) = 0$.
Consequently, we conclude that 
\[U = \begin{cases}
\emptyset & \text{ if } v_q(s) \text{ is odd}, \\
\{\{\gamma_i\} \colon 0\le i \le r-1\} & \text{ if } v_q(s) \text{ is even}. \\
\end{cases}
\]
In order to compute the set $V$, we note that the only proper non-übereven cluster is $\mathcal{R}$ and we have already computed $\xi_\mathcal{R}(\tilde{\lambda}_\mathcal{R})$ in \eqref{eqn:xilambda}. We therefore conclude that 
\[V = \begin{cases}
\emptyset & \text{ if } v_q(s) \text{ is odd}, \\
\{\mathcal{R}\} & \text{ if } v_q(s) \text{ is even}. \\
\end{cases}
\]
If $v_q(s)$ is odd, then $U=V=\emptyset$. Let us consider now the case where $v_q(s)$ is even. In this situation, it is clear that $\#(V/I_{\Q_q}) = 1$. 

By case (2) of Lemma \ref{lemma:reducible}, $F(x)$ is irreducible and so, by Corollary \ref{coro:totallyramified}, it follows that 
$e_{\Q_q(\gamma_0)/\Q_q} = r$. We note that this ramification index equals the size of orbits under the action of the inertia group $I_{\Q_q}$. Consequently,  the $r$ roots of the polynomial $F(x)$ lie under a single orbit under the action of the inertia group $I_{\Q_q}$, so that $\#(U/I_{\Q_q}) = 1 = \#(V/I_{\Q_q})$. 

\vspace{5pt}

From now on we will consider the case $q=r$.

\vspace{5pt}

      \textbf{(2) Case} $\boldsymbol{ r \nmid \Delta}$:  Note that the only odd clusters different from $\mathcal{R}$ are the singletons $\{\gamma_i\}$, which have $P(\gamma_i) = \mathcal{R}$. We compute $\tilde{\lambda}_R = {r}/{(2(r-1))}.$
This, together with the fact that $I_{\Q_r}$ stabilizes the cluster $\mathcal{R}$, yields that
\begin{equation}
\label{eqn:cuenta}
\xi_\mathcal{R}(\tilde{\lambda}_\mathcal{R}) = v_2(r-1) + 1 \neq 0,
\end{equation}
while $\xi_\mathcal{R}(d_\mathcal{R}) = \xi_\mathcal{R}\left({1}/({r-1})\right) = v_2(r-1)$, 
so that $\xi_\mathcal{R}(\tilde{\lambda}_\mathcal{\mathcal{R}}) > \xi_\mathcal{R}(d_R)$ and, consequently, the set $U$ is empty. On the other hand, the only proper non-übereven cluster is $\mathcal{R}$ and \eqref{eqn:cuenta} gives that $V = \emptyset$. 

\vspace{3pt}

 \textbf{(3) Case} $\boldsymbol{r \mid \Delta, \ r \mid s}$: Once again,  the only odd clusters different to $\mathcal{R}$ are the singletons, with $P(\{\gamma_i\}) = \mathcal{R}$. Since $d_\mathcal{R}={1}/{(r-1)} + {v_r(s)}/{r}$ and $\tilde{\lambda}_\mathcal{R} = {r}/{(2(r-1))} + {v_r(s)}/{2}$, we have
\[\xi_\mathcal{R}(\tilde{\lambda}_\mathcal{R}) = v_2(r-1) + 1> v_2(r-1)=\xi_\mathcal{R}(d_\mathcal{R})>0,\]
so $U = V = \emptyset$.

\vspace{3pt}

\textbf{(4) Case} $\boldsymbol{r \mid \Delta, \ r \nmid s}$: In this case, we need to distinguish two subcases, depending on whether $v_r(\Delta) \le 2$ or $v_r(\Delta) \ge 3$.
\vspace{5pt}

\textbf{(4.1) Case} $\boldsymbol{1 \le v_r(\Delta) \le 2}$: In this case, the cluster picture is given by case (5) of Theorem \ref{thm:clustersQ}. We find that the only odd clusters other than $\mathcal{R}$ are the singletons $\{\gamma_i\}$, with parent cluster $\mathcal{R}$. Once again, we compute $\tilde{\lambda}_\mathcal{R} = {r}/{(2(r-1))} + {v_r(\Delta)}/{4}.$
Similarly, we may see that 
\begin{equation}
\label{eqn:cuentainter}
\xi_\mathcal{R}(d_\mathcal{R}) = \xi_\mathcal{R}\left(\frac{1}{r-1} + \frac{v_r(\Delta)}{2r}\right). 
\end{equation}
If either $r \equiv 1 \pmod{4}$ or $v_r(\Delta) = 2$, it readily follows that 
\[\xi_\mathcal{R}(\tilde{\lambda}_\mathcal{R}) = 1+v_2(r-1) > v_2(r-1) = \xi_\mathcal{R}(d_\mathcal{R}),
\] 
so $U = V = \emptyset$.
\vspace{5pt}

Now, suppose that $r \equiv 3 \pmod{4}$ and that $v_r(\Delta) = 1$. Then, $\tilde{\lambda}_\mathcal{R} = {(3r-1)}/{(4(r-1))}$.
If $r \equiv 7 \pmod {8}$, we may see that $\xi_\mathcal{R}(\tilde{\lambda}_\mathcal{R}) = 1$
while from \eqref{eqn:cuentainter} we see that $\xi_\mathcal{R}(d_\mathcal{R}) = 0$,
so that, once again, $U = V = \emptyset$.

Finally, the case where $r \equiv 3 \pmod{8}$ and $v_r(\Delta) = 1$ remains. In this situation, ${\xi_\mathcal{R}(\tilde{\lambda}_\mathcal{R}) = \xi_\mathcal{R}(d_\mathcal{R}) = 0}$,
so that 
\[U = \{\{\gamma_i\} \mid 0 \le i \le r-1\} \quad \text{ and } \quad V = \{\mathcal{R}\}.\]
Since $F(x)$ is irreducible (by Lemma \ref{lemma:reducible}), Corollary \ref{coro:totallyramified} yields that the ramification index of $\Q_r(\gamma_0)/\Q_r$ is $r$. In particular, all orbits of $\mathcal{R}$ under the action of inertia have size $r$. Consequently, we have that $\#U/I_{\Q_r} = 1 = \#V/I_{\Q_r}$.

\vspace{5pt}
\textbf{(4.2) Case} $\boldsymbol{v_r(\Delta) \ge 3}$: In this case the cluster picture is given by case (6) of Theorem \ref{thm:clustersQ}. Firstly, let $\mathfrak{t}_i$ denote the twin given by $\mathfrak{t}_i = \{\gamma_i, \gamma_{r-i} \}$ for $1\le i \le (r-1)/{2}$. By Lemma \ref{lemma:reducible}, $F(x)$ is reducible, and so $L = E$. Then, by Theorem \ref{thm:ramification}, the ramification degree of $E/\Q_r$ is $r-1$ or $(r-1)/2$. In addition, from Proposition \ref{prop:gamma1}, we have that $E = \Q_r(\gamma_1)$ and so the size of the orbits under the action of $I_{\Q_r}$ is either $r-1$ or ${(r-1)}/{2}$. In either case, we have that the inertia group is tame and that no $\mathfrak{t}_i$ is an orphan, and consequently, \cite[Theorem 1.3(iv)]{Bisatt} yields that 
\begin{equation}
\label{eqn:inertiadegree}[I_{\Q_r}:I_{\mathfrak{t}_i}] = \text{denom } d_\mathcal{R} = \frac{r-1}{2}, 
\end{equation}
where we used the fact that $P(\mathfrak{t}_i) = \mathcal{R}$, along with $d_\mathcal{R} = 2/(r-1)$. 
By case (6) of Theorem \ref{thm:clustersQ}, we have that 
\begin{equation*}
\label{eqn:twins}
d_{\mathfrak{t}_i} = \frac{v_r(\Delta)}{2} - \frac{r-2}{r-1} \quad \text{ and } 
\quad \tilde{\lambda}_{\mathfrak{t}_i} = \frac{v_r(\Delta)}{2}.
\end{equation*}
This, together with \eqref{eqn:inertiadegree}, gives that 
\[\xi_{\mathfrak{t}_i}(\tilde{\lambda}_{\mathfrak{t}_i}) = \begin{cases}
0 & \text{ if } 2 \mid \frac{r-1}{2} v_r(\Delta), \\
1 & \text{otherwise},
\end{cases}
\quad \text{ and } \quad
\xi_{\mathfrak{t}_i}(\tilde{d}_{\mathfrak{t}_i}) = \begin{cases}
1 & \text{ if } 2 \mid \frac{r-1}{2} v_r(\Delta), \\
0 & \text{{otherwise}}.
\end{cases}
\]
Finally, we compute the same quantities for the whole cluster $\mathcal{R}$. We get $\tilde{\lambda}_\mathcal{R} = {r}/{(r-1)}$, $\quad\xi_{\mathcal{R}}(\tilde{\lambda}_{\mathcal{R}}) = v_2(r-1)$ and $\xi_{\mathcal{R}}(d_{\mathcal{R}}) = v_2(r-1)-1$.

We note that the only odd clusters different to $\mathcal{R}$ are the singletons $\{\gamma_i\}$, while the only proper non-übereven clusters are the twins $\mathfrak{t}_i$ and the whole cluster $\mathcal{R}$. Since $P(\{\gamma_0\}) = \mathcal{R}$, $P(\{\gamma_i\}) = \mathfrak{t}_i$ for $1 \le i \le {(r-1)}/{2}$ and $P(\{\gamma_i\}) = \mathfrak{t}_{r-i}$ for $ {(r+1)}/{2} \le i \le r-1$, our computations above show that  $U = V = \emptyset$ if $2 \nmid {(r-1)}v_r(\Delta)/2$,
and that 
\[U = \{\{\gamma_i\} \mid 1 \le i \le r-1 \}, \quad V = \left\{\mathfrak{t}_i \text{ }\Big|\text{ } 1\le i \le \frac{r-1}{2}\right\}, \quad \text{ if } 2\text{ }\Big|\text{ }\frac{r-1}{2}v_r(\Delta).
\]
The result now follows by noting that when $2\mid ({r-1})v_r(\Delta)/2$, Theorem \ref{thm:ramification} gives that the ramification index of $E/\Q_r$ is $r-1$. Since, by Proposition \ref{prop:gamma1}, $E = \Q_r(\gamma_1)$, this ramification index equals the size of the orbits under the action of inertia and so all the roots $\gamma_i\neq\gamma_0$ lie in a single orbit under the action of $I_{\Q_r}$. Consequently, we have that $\#U/I_{\Q_r} = \#V/I_{\Q_r} = 1$.
\end{proof}

\begin{thm}
\label{thm:conductor} Let $z, s\in\ZZ$ satisfying $\ref{item1}$
and $\ref{item2}$, and let $C(z,s)/\Q$ be the curve defined in \eqref{eqn:C}. Let $q \in \Z$ be an odd prime of bad reduction. Then, its conductor exponent is given by the following table.    
 \begin{table}[H]
\begin{tabular}{|c||l|l|}
\hline
$\mathfrak{n}(C/\Q_q)$ & \text{Condition} & \text{Comments} \\ \hline
$\frac{r-1}{2}$ &  ${q} \nmid rs$ & \text{$F(x)$ is reducible over $\Q_q$} \\ \hline
$r-1$ & ${q} \neq {r}$, \, ${q} \mid s$ & \text{$F(x)$ is irreducible over $\Q_q$}%, \, v_{q}(s) \text{ odd} 
\\ \hline
$r-1$ & ${q} = {r}$, \, ${r} \nmid \Delta$,  $F(x)$ \text{ is reducible over } $\mathbb{Q}_r$ &  \\ \hline
$r$ & ${q} = {r}$, \, ${r} \nmid \Delta$, $F(x)$ \text{ is irreducible over } $\mathbb{Q}_r$ & \\ \hline
$2r-1$ & ${q} = {r}$, \, ${r} \mid s$, \, $r \mid \Delta$ & \text{$F(x)$ is irreducible over $\Q_r$}% 
\\ \hline
$\frac{3r-1}{2}$ & ${q} = {r}$, \, ${r} \nmid s$, \, $v_{{r}}(\Delta) = 1$ & \text{$F(x)$ is irreducible over $\Q_r$} \\ \hline
$r$ & ${q} = {r}$, \, ${r} \nmid s$,  \, $v_{{r}}(\Delta) = 2$ & \text{$F(x)$ is irreducible over $\Q_r$} \\ \hline
$r-1$ & ${q} = {r}$, \, ${r} \nmid s$, \, $v_{r}(\Delta) \geq 3$ & \text{$F(x)$ is reducible over $\Q_r$} \\ \hline
\end{tabular}
{\small
\caption{Conductor exponent of $C(z,s):y^2 = F(x)$ over $\Q_q$, where $\Delta = s^2-4z^r$.}
    \label{table:conductorQ}}
\end{table}
\end{thm}\begin{proof} 
Let $q\neq r$. Since $q$ is assumed to be a prime of bad reduction, then $q\mid \Delta$. If $q\nmid s$, by Lemmas \ref{lemma:lematecnico} and  \ref{lemma:differenceroots} we have that $v_q(\gamma_k-\gamma_j)>0$ if and only if $j=r-k$, for any $1 \le j, k \le r-1$. Therefore, the reduction of $F(x)$ mod $q$ has at most double roots and, consequently, $C$ has semistable reduction over $q$ (see the discussion after Proposition 2.11 in \cite{BCDF}). Then, we can use \cite[Corollary 9.4]{DDMM} to compute the conductor exponent, which is the number of twins in the cluster picture. By case (1) of Theorem \ref{thm:clustersQ}, this is equal to ${(r-1)}/{2}$. 

When $q\mid s$, Theorem \ref{thm:ramification} implies that $q\nmid e_{L/\Q_q}$, therefore the extension is tamely ramified and the result follows directly from Proposition \ref{prop:conductorFred}. 

\vspace{5pt}

From now on, we will assume $q=r$. By Proposition \ref{prop:conductorFred}, it only remains to compute the wild component, which, by Theorem \ref{thm:ramification}, can be non-zero only when $F(x)$ is irreducible over $\Q_r$, and therefore $\deg(F(x)) = r$. In this situation, the second case of Theorem \ref{thm:formulascluster} applies, leading to compute $v_r(\Delta_{\Q_r(\gamma_0)/\Q_r})$ and $f_{\Q_r(\gamma_0)/\Q_r}$, where the latter is equal to 1, by Corollary~\ref{coro:totallyramified}. Finally, the value of $v_r(\Delta({\Q_r(\gamma_0)/\Q_r}))$ was computed for all cases under consideration in Lemma \ref{lemma:valuationdiscriminants}, and so the computation of the wild conductor is straightforward, giving that 
\[\mathfrak{n}_{\text{wild}}(C/\Q_r) = \delta - r + 1,\]
and so
\begin{equation}
\label{eqn:wildconductorcases}
\mathfrak{n}_{\text{wild}}(C/\Q_r) = \begin{cases}
1 & \text{ if } r\nmid \Delta,\\
r % CORRECCION FRANCO
& \text{ if } r\mid \Delta, \ r\mid s,\\
\frac{r+1}{2} & \text{ if } v_r(\Delta) = 1, r \nmid s,\\
1 & \text{ if } v_r(\Delta) = 2, r \nmid s, \\
\end{cases}
\end{equation}
whence the expression for the conductor in the statement of the theorem follows. We recall that, if $r \nmid s$ and $v_r(\Delta) \ge 3$, the polynomial $F(x)$ is reducible by Lemma \ref{lemma:reducible} and therefore, its wild conductor is $0$. {Finally, the comments about the irreducibility of $F(x)$ follow directly from Lemma \ref{lemma:reducible}.}
\end{proof}

Let $\id{q}$ be an odd prime of $K=\Q(\omega)$ and let $K_\id{q}(\mathcal{R})$ be the splitting field of $F(x)$. Since $\omega$ lies in the splitting field of $F(x)$ (see Proposition \ref{prop:splittingfield} and Remark \ref{rmk:splittingfield}), we can consider $K_\id{q}(\mathcal{R})$ as an extension field of $K_\id{q}=\Q_q(\omega)$. Let $\id{r}$ be the unique prime in $K$ above $r$.

\begin{prop} \label{prop:tameoverK} Let $\id{q}$ be an odd prime of $K$ such that $\id{q}\mid rs$. The tame component is equal to  $\mathfrak{n}_\text{tame}(C/K_\id{q})=r-1$. In particular,  if $F(x)$ is reducible over $\Q_r$, then $\mathfrak{n}(C/K_\id{r})=r-1$.
\end{prop}

\begin{proof} 

We can mimic the proof of Proposition \ref{prop:conductorFred} by proving that $\#U/I_{K_\id{r}}=\#V/I_{K_\id{r}}$. Case (1) can be verified following exactly the same steps and for cases (2) and (3), 
one can note that the only odd clusters different from $\mathcal{R}$ are the singletons $\{\gamma_i\}$, which have $P(\gamma_i) = \mathcal{R}$. In addition, $\mathcal{R}$ is the only proper non-übereven cluster in all cases, so it suffices to compute $\xi_\mathcal{R}(\tilde{\lambda}_\mathcal{R})$ and $\xi_\mathcal{R}(d_\mathcal{R})$, using Corollary \ref{coro:clusteroverK}. For both cases, we get
\[\xi_\mathcal{R}(\tilde{\lambda}_\mathcal{R})=2>1=\xi_\mathcal{R}(d_\mathcal{R}),
\]
and so $U=V=\emptyset$. We now proceed to the two subcases (4.1) and (4.2).

\vspace{3pt}

\textbf{(4.1) Case} $\boldsymbol{{r \nmid \text{}} s, \ 1 \le v_r(\Delta)\le 2}$: 
As in the previous cases, the only odd clusters different from $\mathcal{R}$ are the singletons and the only proper non-übereven cluster is $\mathcal{R}$. Omitting details, we find that 
\[U = \begin{cases}
\{\{\gamma_i\} \mid 0 \le i \le r-1\} & \text{ if } v_{{r}}(\Delta) = 1 \text{ and } r \equiv 3 \pmod{8}, \\
\emptyset & \text{ otherwise,}
\end{cases}
\]
and 
\[V = \begin{cases}
\{\mathcal{R}\} & \text{ if } v_{{r}}(\Delta) = 1 \text{ and } r \equiv 3 \pmod{8}, \\
\emptyset & \text{ otherwise.}
\end{cases}
\]
 If $U, V \neq \emptyset$, Theorem \ref{thm:ramification} gives that the extension $K_{\id{r}}(\mathcal{R})/K_{\id{r}}$ has ramification degree
\[e_{K_{\id{r}}(\mathcal{R})/K_{\id{r}}} = 
\frac{e_{K_{\id{r}}(\mathcal{R})/\Q_{{r}}}}{e_{K_\id{r}/\Q_r}} = \frac{r(r-1)}{2}\cdot\frac{2}{r-1} = r,
\]
and consequently, the ramification index of the extension $K_{\id{r}}(\gamma_0)/K_{\id{r}}$ is $r$, and so the sizes of orbits of inertia is $r$. Thus, we have that $\#U/{I_{K_{\id{r}}}} = 1 = \#V/{I_{K_{\id{r}}}}$.

\vspace{5pt}
\textbf{(4.2) Case} $\boldsymbol{{r\nmid \text{}} s, \ v_r(\Delta)\ge3}$: Note that in this case $F(x)$ is reducible, by Lemma \ref{lemma:reducible}, and the cluster picture is given by case (5) of
Corollary \ref{coro:clusteroverK}. In particular, we let $\mathbf{t}_i$ denote the twin $\{\gamma_i, \gamma_{r-i}\}$, for $ 1 \le i \le {(r-1)}/{2}$. We remark that the only cluster containing $\mathbf{t}_i$ is $\mathcal{R}$, which has $d_\mathcal{R} = 1$. Consequently, it follows from \cite[Theorem 1.3(iv)]{Bisatt} that $[I_{K_\id{r}}:I_{\mathbf{t}_i}] = 1$,
irrespective of whether $\mathbf{t}_i$ is an orphan or not. By following an identical approach to the proof of case (6) of Theorem \ref{thm:conductor}, we show that 

\[U = \begin{cases}
    \{\{\gamma_i\} \mid 1 \le i \le  r-1\} & \text{ if } 2 \mid \frac{r-1}{2}v_r(\Delta), \\
    \emptyset & \text{ otherwise,}
\end{cases}
\]

and
\[V = \begin{cases}
    \{\mathbf{t}_i \mid 1 \le i \le \frac{r-1}{2}\} & \text{ if } 2 \mid \frac{r-1}{2}v_r(\Delta), \\
    \emptyset & \text{ otherwise.}
\end{cases}
\]
If $2 \nmid {(r-1)}v_r(\Delta)/2$, $U=V=\emptyset$. If $2 \mid {(r-1)}v_r(\Delta)/2$, Lemma \ref{lemma:indexram} gives that the ramification degree of the extension $K_\id{r}(\mathcal{R})/K_\id{r}$ is $2$. This fact, together with $[I_{K_{\id{r}}}:I_{\mathbf{t}_i}] = 1$, means that the inertia group permutes the two roots belonging to a twin, but that the twins themselves are stable under the action of inertia. Consequently, both $\#U/{I_{K_\id{r}}}$ and $\#V/{I_{K_\id{r}}}$ are equal to $ {(r-1)}/{2}$, finishing the proof of the proposition.
\end{proof}

\begin{thm}
\label{thm:conductoroverK}
Let $z, s\in\ZZ$ satisfying $\ref{item1}$
and $\ref{item2}$, and let $C(z,s)/K$ be the curve defined in \eqref{eqn:C}. Let $\id{q}$ be an odd prime of $K$ of bad reduction  above the rational prime $q$. Then, its conductor exponent is given by the following table.
    \begin{table}[H]
\begin{tabular}{|c||l|l|}
\hline
$\mathfrak{n}(C/K_\id{q})$ & \text{Condition} & \text{Comments} \\ \hline
$\frac{r-1}{2}$ &  ${q} \nmid rs$ & \text{$F(x)$ is reducible over $\Q_q$}  \\ \hline
$r-1$ & ${q} \neq {r}$, \, ${q} \mid s$ & \text{$F(x)$ is irreducible over $\Q_q$}%, \, v_{q}(s) \text{ odd} 
\\ \hline
%0 & \id{q} \neq \id{r},  \, \id{q} \mid z, \, v_{q}(s) \text{ even and } F(x) \text{ reducible over } \Q_q  \\ \hline
%r-1 & \id{q} \neq \id{r},  \, \id{q} \mid s, \, v_{q}(s) \text{ even and } F(x) \text{ irreducible over } \Q_q  \\ \hline
$r-1$ & ${q} = {r}$, \, ${r} \nmid \Delta$, $F(x)$ \text{ is reducible over } $\mathbb{Q}_r$ &  \\ \hline
$\frac{3(r-1)}{2}$ & ${q} = {r}$, \, ${r} \nmid \Delta$, $F(x)$ \text{ is irreducible over } $\mathbb{Q}_r$ & \\ \hline
%r-1 & \id{q} = \id{r}, \, \id{r} \mid s, \, \id{r} \mid \Delta \text{ and } F(x) \text{ reducible over } \Q_r  \\ \hline
$\frac{(r-1)(r+2)}{2}$ & ${q} = {r}$, \, ${r} \mid s$, \, $r \mid \Delta$ & \text{$F(x)$ is irreducible over $\Q_r$}
\\ \hline
$\frac{(r-1)(r+5)}{4}$ & ${q} = {r}$, \, ${r} \nmid s$, \, $v_{{r}}(\Delta) = 1$ & \text{$F(x)$ is irreducible over $\Q_r$} \\ \hline
$\frac{3(r-1)}{2}$ & ${q} = {r}$, \, ${r} \nmid s$,  \, $v_{{r}}(\Delta) = 2$ & \text{$F(x)$ is irreducible over $\Q_r$} \\ \hline
$r-1$ & ${q} = {r}$, \, ${r} \nmid s$, \, $v_{r}(\Delta) \geq 3$ & \text{$F(x)$ is reducible over $\Q_r$} \\ \hline

\end{tabular}
{\small \caption{Conductor exponent of $C(z,s):y^2 = F(x)$ over $K_\id{q}$, where $\Delta = s^2-4z^r$.}
    \label{table:conductorK} }
\end{table}
\end{thm}
\begin{proof}
 When ${q}\nmid rs$, the proof mimics the one of Theorem \ref{thm:conductor}. 
 If ${q}\neq {r}$ and ${q}\mid s$ we have from Theorem \ref{thm:ramification} that the extension $K_\id{q}(\mathcal{R})/K_\id{q}$ is tamely ramified so the result follows from Proposition \ref{prop:tameoverK}.

By Proposition \ref{prop:tameoverK}, it only remains to compute the wild part, which is only non-zero when ${q}={r}$ and $F(x)$ is irreducible over $\Q_r$. Note that, since $K_{\id{r}} = \Q_r(\omega)$, the extension $K_\id{r}/\Q_r$ is totally ramified of degree ${(r-1)}/{2}$ and so it is tamely ramified with ramification index $e_{K_\id{r}/\Q_\id{r}} = {(r-1)}/{2}$. Therefore, the computation of $\mathfrak{n}_\text{wild}(C/K_\id{r})$ is straightforward from
\cite[Lemma 2.2]{CelineClusters} and the value of $\mathfrak{n}_\text{wild}(C/\Q_r)$ calculated in (\ref{eqn:wildconductorcases}). We have that $\mathfrak{n}_\text{wild}(C/K_\id{r})=0$ if $F(x)$ is reducible over $\Q_r$ and, when $F(x)$ is irreducible, we get
\begin{equation}\label{eq:wild-over-K}
\mathfrak{n}_\text{wild}(C/K_\id{r})=\begin{cases}
\frac{r-1}{2} & \text{ if } r\nmid \Delta,\\
\frac{r(r-1)}{2} % CORRECCION FRANCO
& \text{ if } r\mid \Delta, \ r\mid s,\\
\frac{(r+1)(r-1)}{4} & \text{ if } v_r(\Delta) = 1, r \nmid s,\\
\frac{r-1}{2} & \text{ if } v_r(\Delta) = 2, r \nmid s. \\
\end{cases}
\end{equation}
\end{proof}

Recall that in step (3) of the modular method we have to compute some space of newforms with level given by the conductor of the curve. If the level is too large, this step could be unfeasible. For this reason, by a computational point of view, the following result may be useful in the future.

\begin{coro} Suppose that $v_r(\Delta)\ge 3$ and that $r \nmid s$. Consider the curve $C'/{K_\id{r}}$, which is the quadratic twist of $C$ by a uniformizer of $K_\id{r}$. Then, $C'$ is semistable and has conductor exponent ${(r-1)}/{2}$.
\end{coro}

\begin{proof} 
We begin by noting that the leading coefficient of the polynomial defining $C'$ now has valuation exactly equal to $1$. Since the cluster picture is invariant under quadratic twists, we have that 
\[\nu_\mathcal{R}(C') = 1 + r\cdot 1 = r+1 \in 2\Z.
\]
On the other hand, since $v_r(\Delta) \ge 3$, Lemma \ref{lemma:reducible} gives that the polynomial $F(x)$ is reducible over $\Q_r$, and then,  by Proposition \ref{prop:gamma0}, $\gamma_0 \in \Q_q$. Then, Lemma \ref{lemma:indexram} gives that $e_{K_\id{q}(\mathcal{R})/K_\id{q}}\le 2$. 

In addition, it can be shown that all the twins in the cluster picture are invariant under the action of inertia. We may then apply \cite[Definition 1.7 and Theorem 1.8]{DDMM} to conclude that $C'/K_{\id{r}}$ is semistable. Then, the computation of the conductor follows directly from \cite[Corollary 9.4]{DDMM}, since there are precisely $({r-1})/{2}$ even clusters, corresponding to the twins.
\end{proof}

\section{Diophantine applications}
\label{sec:applications}

In this section we elaborate on the discussion in the introduction and show how  
Theorem~\ref{thm:conductoroverK} can be used to recover and extend some of the previously existing results in the literature pertaining to the study of the solutions $(a,b,c)$ to the GFE \eqref{eq:GFE}, where $\gcd(Aa, Bb, Cc) = 1$. This assumption is standard in the literature and will simplify computations.

\vspace{5pt}

To understand where the curve $C(z,s)$ arises from and the role that it plays, we must go back to \cite{TTV}, where it is shown that the equation
\begin{align}\label{eq:hypOriginal}
    C(u)\colon y^2=(-1)^\frac{r-1}{2}xh(2-x^2)+u, \quad u\in\Q,
\end{align}
defines a family of hyperelliptic curves of genus ${(r-1)}/{2}$ whose Jacobians contain $K$ in their endomorphism algebra. Moreover, the endomorphisms of $J:= \text{Jac}(C(u))$ are defined over $K$, implying that $J$
becomes of $\GL_2$-type over $K$.

This turns out to be a useful piece of machinery to build the two-dimensional Galois representation needed when following the modular method (see e.g.\ \cite[Theorem A.8]{CelineClusters}, or \cite[p.\ 420]{Darmon} for a more detailed explanation about the construction).
\begin{comment}
\begin{lemma} \label{lemma:twist}
Let $u\in\Q$. There exist $z,s\in\ZZ$ such that $C(u)$ is a quadratic twist by $z$ of the curve
\begin{align*}
   C(z^2,s) \colon y^2 =  (-1)^\frac{r-1}{2}z^{r-1}xh\left(2-\frac{x^2}{z^2}\right)+s.
\end{align*}
Moreover, we may choose $z$ and $s$ satisfying $\ref{item1}$ and $\ref{item2}$.
\end{lemma}
\begin{proof}
 Let $u={u_1}/{u_2}$ be a rational number with $\gcd(u_1,u_2)=1$. We can multiply the numerator and the denominator of $u$ by an integer $m$ in order to get an equivalent fraction satisfying that its denominator is an $r$-th power. That is, there exist $m \in \Z$ such that 
 \begin{align*}
     u=\frac{u_1}{u_2}\cdot\frac{m}{m}=\frac{s}{z^r},
 \end{align*}
for some $z \in \Z$. Moreover, we can choose $m$ to be minimal, in the sense that if $q$ is a prime number dividing $z$, then $v_q(z^r)>v_q(s)$. Moreover, we note that this implies that if $q$ divides both $z$ and $s$
(i.e.\ $q\mid m$) then $r\nmid v_q(s)$. 

Multiplying both sides of the equation defining $C(u)=C({s}/{z^r})$ by $z^r$, we get
\[z^ry^2=z^r(-1)^\frac{r-1}{2}xh(2-x^2)+s.\]
Hence, the result follows by twisting by $z$ and applying the change of variables given by $x\mapsto {x}/{z}$, $y\mapsto{y}/{z^{(r+1)/2}}$.
\end{proof}
\end{comment}

\begin{lemma} \label{lemma:ultimo} Let $\delta \in \Q$. Then the curve
   $C(\delta^2 z, \delta^r s)$ is isomorphic to the quadratic twist of $C(z, s)$ by $\delta$.
\end{lemma}
\begin{proof}
If we twist $C(z,s)$ by $\delta$ we get the model 
\begin{equation}\label{eq:lemmaultimo}
\delta y^2 = (-z)^\frac{r-1}{2}xh(2-x^2)+s.
\end{equation}
Then, the result follows by multiplying both sides (\ref{eq:lemmaultimo}) by $\delta^r$ and making the change of variables $x\mapsto x/\delta$, $y\mapsto y/\delta^{(r+1)/2}$.
\end{proof}
\begin{lemma} \label{lemma:twist} 
Let $z, s\in\ZZ$. Suppose that either %one of the following holds:
\begin{enumerate}
\item $z$ is a perfect square, or
\item $\Delta=s^2-4z^r$ is a perfect square.
\end{enumerate}
Then the Jacobian of $C(z,s)$ is of $\GL_2$-type over $K$.

\end{lemma}
\begin{proof}
\begin{comment}
Firstly, suppose that (1) holds, so $z=z_0^2$ for some $z_0\in\ZZ$. Let $u=s/z_0^r$. Multiplying both sides of the equation defining $C(u)=C({s}/{z_0^r})$ by $z_0^r$, we get
\begin{align*}
    z_0^ry^2&=z_0^r(-1)^\frac{r-1}{2}xh(2-x^2)+s\\
        &=(-z)^\frac{r-1}{2}xh(2-x^2)+s
\end{align*}
Then, twisting by $z_0$ and applying the change of variables given by $x\mapsto {x}/{z_0}$, $y\mapsto{y}/{z_0^{(r+1)/2}}$ we get the curve $C(z,s)$ in \eqref{eqn:C}. Hence, the result follows from the fact that the Jacobian of $C(u)$ is of $\GL_2$-type over $K$, by \cite[Theorem 1]{TTV},  since the Jacobian of any quadratic twist of $C(u)$ remains of $\GL_2$-type over $K$.
\vspace{5pt}
\end{comment}
Firstly, suppose that (1) holds, so $z=z_0^2$ for some $z_0\in\ZZ$. Let $u=s/z_0^r$. Then, by Lemma~\ref{lemma:ultimo} we have that $C(z,s)=C(z_0^2,z_0^ru)$ is the quadratic twist by $z_0$ of $C(1,u)=C(u)$. Hence, the result follows from the fact that $J$ is of $\GL_2$-type over $K$, by \cite[Theorem 1]{TTV},  since the Jacobian of any quadratic twist of $C(u)$ remains of $\GL_2$-type over $K$.

\vspace{5pt}

Secondly, suppose that (2) holds, so that $\Delta=d^2$, for some $d\in\ZZ$.  Let $t = (s/d+1)/2$ and $\alpha = \sqrt{z^r/\Delta}$. We note that 
%Then, there exists a rational number $t$ such that $2t-1=s/\delta\in\Q$. Hence, the Jacobian of the curve
\begin{align}\label{eq:CofGL2type} C' := C\Big(\frac{z^r}{\Delta}, \Big(\frac{z^r}{\Delta}\Big)^\frac{r-1}{2} \cdot \frac{s}{d}\Big) 
      = C\left(\alpha^2, \alpha^{r-1}(2t-1)\right),
\end{align}
 and therefore corresponds to the curve $C'_r(t)$ defined in \cite[Lemma 2.36]{BCDF}. By \cite[Theorem 2.38]{BCDF}, the Jacobian of $C'$ is of $\GL_2$-type over $K$. %, by Lemma 2.36 and Theorem 2.38 of \cite{BCDF} (where, using their notation, $\alpha=\sqrt{z^r/\Delta}$). 
%Finally, the result follows by noting that the expression of $C(z,s)$ in \eqref{eqn:C} follows from \eqref{eq:CofGL2type} by applying the change of variables $x\mapsto x \cdot {z^{(r-1)/2}}/{d}$, $y\mapsto y \cdot {d^{r-2}}{z^{r(r-1)/2}}$ and twisting by ${z^{r(r-1)/2}}/{d}$.
%can be get from (\ref{eq:CofGL2type}) by using the change of variables $x\mapsto x \cdot \frac{z^{(r-1)/2}}{\delta}$, $y\mapsto y \cdot \frac{\delta^{r-2}}{z^{r(r-1)/2}}$ and twisting by $\frac{z^{r(r-1)/2}}{\delta}$.
Finally, the result follows by noting that we can write $C'$ as 
\[C'=C\Big(\frac{z^{r-1}}{d^2}\cdot z,\frac{z^\frac{r(r-1)}{2}}{d^r}\cdot s\Big),\]
which, by Lemma \ref{lemma:ultimo}, is the quadratic twist of $C(z,s)$ by $z^{(r-1)/2}/d$.
\end{proof}

\begin{comment}

\begin{lemma}\label{lemma:DeltaS}
Let $z, s\in \ZZ$ such that $\Delta=s^2-4z^r$ is a perfect square. Then, the Jacobian of 
\begin{align}\label{eq:CofGL2type}
     C\Big(\frac{z^r}{\Delta}, \Big(\frac{z^r}{\Delta}\Big)^\frac{r-1}{2} \cdot \frac{s}{\sqrt{\Delta}}\Big)
\end{align}
is of $\GL_2-type$ over $K$.
\end{lemma}
    \begin{proof} By hypothesis, there exists a rational number $t$ such that $2t-1=s/\sqrt{\Delta}\in\Q$. Then, the result follows from Lemma 2.36 and Theorem 2.38 of \cite{BCDF} (where $\alpha=\sqrt{z^r/\Delta}$).
\end{proof}

\begin{lemma} 
Let $z, s\in \ZZ$ such that $\Delta=s^2+4z^r$ is a perfect square. Then, the Jacobian of $C(z,s)$ is of $\GL_2$-type over $K$.
\end{lemma}
\begin{proof} This follows by Lemma \ref{lemma:DeltaS}, by noting that $C(z,s)$ can be get from (\ref{eq:CofGL2type}) by using the change of variables $x\mapsto x \cdot \frac{z^{(r-1)/2}}{\sqrt{\Delta}}$, $y\mapsto y \cdot \frac{\sqrt{\Delta}^{r-2}}{z^{r(r-1)/2}}$ and twisting by $\frac{z^{r(r-1)/2}}{\sqrt{\Delta}}$
\end{proof}
\end{comment}

Let us assume that the Jacobian of $C(z,s)$ if of $\GL_2$-type over $K$. In this case it has associated a strictly compatible system of two-dimensional Galois representations (in the sense of \cite[Definition 4.10]{Bockle}) that we denote by $\{\rho_{J, \lambda}:\Gal(\bar{K}/K)\to\GL_2(K_\lambda)\}$ (see \cite[Section 11.10]{Shimura}), {where $\lambda$ is a place of $K$}. Let $\id{q}$ be a prime of $K$. It follows by \cite[$\S 8$]{Ulmer} that the conductor of $\rho_{J,\lambda}$ at $\id{q}$ is independent of $\lambda$, for all $\lambda$ satisfying $\id{q}\nmid\text{Norm}(\lambda)$.

When following the modular method to approach the GFE (\ref{eq:GFE}), one wants to consider a prime $\id{p}$ of $K$ above $p$, and then the  residual representation modulo $p$ of the $\id{p}$-member of this family, i.e., $\bar{\rho}_{J,\id{p}}:\Gal(\bar{K}/K)\to \GL_2(\F_p)$. This is the representation described in step (1) of the method.

%Suppose we want to follow the modular method to approach equation (\ref{eq:GFE}). While considering (\ref{eq:GFE}), we think $q$ and $r$ as fixed exponents and $p$ to be a free variable. Let $\id{p}$ be a prime of $K$ above $p$. The representation described in step (1) of the modular method is  the residual representation modulo $p$ of the $\id{p}$-member of this family, i.e., $\bar{\rho}_{J,\id{p}}:\Gal(\bar{K}/K)\to \GL_2(\F_p)$.

In the subsequent subsections, we will compute  the conductor exponent of ${\rho}_{J, \id{p}}$ at all odd places. To compute the conductor of $\bar{\rho}_{J,\id{p}}$ some large image results are needed, but that is out of the scope of this article.

%attached to the Jacobian of $C(z,s)$ over the field $K$.
We shall make use of the following result, which is an extension of \cite[Proposition A.12]{CelineClusters}, since their proof applies to our curve. 

\begin{proposition}
    \label{prop:jacobian} Keep the previous notation, and let $\id{q}$ be a prime ideal of $K$. % Assume that the Jacobian of $C$ is of $\GL_2$-type over $K$. %Let $\rho_{J, \lambda}$ be its associated two-dimensional Galois representation. 
    Then, 
    \[\mathfrak{n}_\id{q}(\rho_{J, \id{p}}) = \frac{2}{r-1}\mathfrak{n}(C/K_\id{q}).\]
\end{proposition}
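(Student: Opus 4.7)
The plan is to follow the proof of \cite[Proposition A.12]{CelineClusters} essentially verbatim; the only real task is to verify that the hypotheses used there carry over to the curve $C = C(z,s)$. By the construction of \cite{TTV} recalled at the start of Section \ref{sec:applications}, the Jacobian $J = \text{Jac}(C)$ has dimension $(r-1)/2$, is of $\GL_2$-type, and its endomorphism algebra contains a subring isomorphic to $\mathcal{O}_K$ whose elements act by endomorphisms defined over $K$. Hence for every rational prime $\ell$ and every prime $\lambda$ of $K$ above $\ell$, the $\lambda$-adic Tate module $V_\lambda J$ is free of rank $2$ over $K_\lambda$, and the two-dimensional representation $\rho_{J,\lambda}\colon G_K \to \GL_2(K_\lambda)$ is precisely the action of the absolute Galois group on $V_\lambda J$.

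First, I would use the natural decomposition of $(K \otimes_\Q \Q_\ell)[G_K]$-modules
\[
V_\ell J \ \cong \ \bigoplus_{\lambda \mid \ell} V_\lambda J.
\]
By Definition \ref{def:conductor}, the conductor exponent $\mathfrak{n}(C/K_\id{q}) = \mathfrak{n}(J/K_\id{q})$ is the Artin conductor at $\id{q}$ of $V_\ell J$ viewed as a $\Q_\ell$-representation (for any $\ell$ coprime to the residue characteristic of $\id{q}$). Since Artin conductors are additive on direct sums, the computation reduces to relating the $\Q_\ell$-conductor of each summand $V_\lambda J$ to its $K_\lambda$-conductor.

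The second step is the standard observation that if $W$ is a representation of $G_{K_\id{q}}$ on a $K_\lambda$-vector space, then its $\Q_\ell$-Artin conductor exponent equals $[K_\lambda : \Q_\ell]$ times its $K_\lambda$-Artin conductor exponent; this is because both the Swan and tame contributions in Serre's conductor formula scale linearly with the degree of the coefficient field. Summing over $\lambda \mid \ell$ and using compatibility of the system $\{\rho_{J,\lambda}\}$ (so that $\mathfrak{n}_\id{q}(\rho_{J,\lambda})$ is independent of $\lambda$), one obtains
\[
\mathfrak{n}(C/K_\id{q}) \ = \ \sum_{\lambda \mid \ell} [K_\lambda:\Q_\ell]\, \mathfrak{n}_\id{q}(\rho_{J,\lambda}) \ = \ [K:\Q]\, \mathfrak{n}_\id{q}(\rho_{J,\lambda}) \ = \ \frac{r-1}{2}\, \mathfrak{n}_\id{q}(\rho_{J,\lambda}),
\]
which is equivalent to the claimed identity.

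The main subtlety is justifying the Galois-equivariance of the decomposition together with the compatibility of $\{\rho_{J,\lambda}\}$ at the given prime $\id{q}$; both rest on the fact that the $\mathcal{O}_K$-action on $J$ is defined over $K$, which is guaranteed by \cite{TTV}. By choosing $\ell$ distinct from the residue characteristic of $\id{q}$ the compatibility and the scaling identity become standard ingredients for $\GL_2$-type abelian varieties, and are exactly the ones invoked in \cite[Proposition A.12]{CelineClusters}; no feature of our particular hyperelliptic equation enters the argument beyond the $\GL_2$-type structure itself.
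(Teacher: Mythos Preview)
Your proposal is correct and matches the paper's approach exactly: the paper does not give an independent argument but simply states that the proof of \cite[Proposition~A.12]{CelineClusters} applies to $C(z,s)$, and what you have written is precisely a reconstruction of that argument, with the only extra verification being that the $\GL_2$-type structure over $K$ is supplied by \cite{TTV}. There is nothing to add.
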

%\TODO{Algo que quizas haya que corregir es que me parece que si fijamos el $\id{p}$ entonces tenemos que mirar el conductor para primos que no dividen a $p$, y entonces la definicion de `odd conductor' no es del todo correcta, ya que faltan esos primos tambien}
Since our results only apply to odd places, we shall refer to the \textit{odd conductor}, and write $\mathfrak{n}_{\text{odd}}(\rho_{J,\id{p}})$, to refer to the part of the conductor of $\rho_{J,\id{p}}$ which does not include the even places. %\TODO{Pedro: Debe ser una tontería pero estoy perdido, ¿por qué no calculamos las plazas que dividen a $p$?}

\subsection{\texorpdfstring{Signature $(p,p,r)$}{Signature (p,p,r)}} Let $(a,b,c)$ be a non-trivial solution of (\ref{eq:GFE}) for signature $(p,p,r)$. In other words,  the equality $Aa^p+Bb^p=Cc^r$ holds. Recall that we further assume that $\gcd(Aa,Bb,Cc)=1$. Moreover, without loss of generality, we can assume that $C$ is free of $r$-th powers (otherwise it induces a solution of a GFE with same signature satisfying such a condition).

To achieve a curve compatible with the signature $(p,p,r)$, we set the values
\[z = C^2c^2 \quad \text{and} \quad s = 2C^{r-1}(Bb^p-Aa^p),
\]
so that the curve $C(z,s)$ from (\ref{eqn:C}) becomes
\[C^-_{r}(a,b): y^2 = (-1)^{\frac{r-1}{2}} (Cc)^{r-1}xh\left(2-\frac{x^2}{C^2c^2}\right) + 2C^{r-1}(Bb^p-Aa^p), \]
with Jacobian $J_r^-$ over $K$. Note that, for $A = B = C = 1$, we recover the curve $C_r^-(a,b)$ originally studied by Darmon in \cite{Darmon}.
In this case,
\[\Delta = s^2-4z^{r} = -16ABC^{2(r-1)}a^pb^p,
\]
and the discriminant of the curve $C_r^-(a,b)$ is, by Lemma \ref{lemma:discriminant},
\[\Delta_{C^-_{r}(a,b)}= 2^{4(r-1)}r^rC^{(r-1)^2}(ABa^pb^p)^\frac{r-1}{2}.
\]
%\TODO{Since $z$ is a perfect square, by Lemma \ref{lemma:twist} we can consider the two-dimensional Galois representation $\rho_{J^-_{r}, \lambda}$ attached to the Jacobian of $C_r^-(a,b)$. 
%Then, with Theorem \ref{thm:conductoroverK} and Proposition \ref{prop:jacobian}, along with the coprimality assumption in \eqref{eq:GFE}, it is straightforward to prove the following result.}
In this case we have  $F(x)=(-1)^{\frac{r-1}{2}} (Cc)^{r-1}xh\left(2-\frac{x^2}{C^2c^2}\right) + 2C^{r-1}(Bb^p-Aa^p)$.

\begin{coro}

    Let $\id{r}$ be the unique prime ideal of $K$ above $r$, and define $\varepsilon_r$ with the expression
    \[\varepsilon_r = \begin{cases}
        2 & \text{ if } r \nmid ABCab \text{ and } F(x) \text{ is reducible over } \Q_r, \\
        3 & \text{ if } r \nmid ABCab \text{ and } F(x) \text{ is irreducible over } \Q_r, \\
        2 &\text{ if } r \mid ab, \\
  
       \frac{r+5}{2} & \text{ if } r \nmid ab \text{ and  } v_r(AB) = 1,\\

        3 & \text{ if } r \nmid ab \text{ and  } v_r(AB)=2, \\

        2 & \text{ if } r \nmid ab \text{ and  } v_r(AB) \ge 3, \\
        r+2 & \text{ if } r \mid C.       
    \end{cases}
    \]
    Then, the odd conductor of the representation $\rho_{J^-_{r}, \id{p}}$, is given by 
    \[\mathfrak{n}_{\text{odd}}(\rho_{J^-_r, \id{p}}) = \id{r}^{\varepsilon_r} \prod_{\substack{\id{q} \mid ABab, \\ \id{q} \nmid r}} \mathfrak{q} \prod_{\substack{\id{q} \mid C, \\ \id{q} \nmid r}} \mathfrak{q}^2.
    \]
\end{coro}

\begin{proof}
First, we notice that $z$ and $s$ satisfy conditions \ref{item1} and \ref{item2}. This is clear, since, given a prime $q$ dividing both $z$ and $s$, we have that $q\mid C$ and so \[v_q(z^r)=2rv_q(C)>2(r-1)v_q(C)=v(s^2).\] Condition \ref{item2} follows from the fact that $C$ is free of $r$-th powers.

Notice that $z$ is a square, so Lemma \ref{lemma:twist} applies. 
Then, the results follows by applying Theorem \ref{thm:conductoroverK}  and Proposition \ref{prop:jacobian}, along with the coprimality assumption.
\end{proof}
%If $A = B = C = 1$, the curve $C^-_r$ is the curve $C^-$ presented in \cite[Lemma 5.5]{ChenKoutsianas}. In this case, we see that $\varepsilon_r = 2$ or $\varepsilon_r = 3$ and the value of the conductor obtained by setting $r=5$ yields 
    %\[\mathfrak{n}_{\text{odd}}(\rho_{J^-_r, \lambda}) = \id{r}^{\varepsilon_r} \prod_{\substack{\id{q} \mid ab, \\ \id{q} \nmid r}}\id{q},
   % \]
%which recovers the result in \cite[Theorem 7.9]{ChenKoutsianas}. 
Note that when $A = B = C = 1$ and $r=5$ we get that $\varepsilon_r \in\{2,3\}$, so we recover the result in \cite[Theorem 7.9]{ChenKoutsianas}. For a more general prime $r\ge5$, the result matches the values obtained in \cite{CelineClusters}. 

\subsection{\texorpdfstring{Signature $(r,r,p)$}{Signature (r,r,p)}}
 Let $(a,b,c)$ be a non-trivial solution of (\ref{eq:GFE}) for signature $(r,r,p)$. In other words,  the equality $Aa^r+Bb^r=Cc^p$ holds. Recall that we further assume that $\gcd(Aa,Bb,Cc)=1$. Similarly as before, without loss of generality we can assume that $A$ and $B$ are free of $r$-th powers.

The first instance of a Frey hyperelliptic curve compatible with the signature $(r,r,p)$ is due to Kraus \cite{Kraus}, who considered the case $A=B=C=1$. In this subsection, we generalize his curve by allowing for arbitrary values of $A$, $B$ and $C$. For this, we set $s$ and $z$ to be 
\[z = {-ABab} \quad \text{and} \quad s =  {(AB)^\frac{r-1}{2}(Bb^r-Aa^r)},
\]
so that the curve $C(z,s)$ from (\ref{eqn:C}) becomes 
\begin{align*}
    C_{r,r}(a,b): y^2 = (ABab)^\frac{r-1}{2}xh\left(2+\frac{x^2}{ABab}\right) + (AB)^\frac{r-1}{2}(Bb^r-Aa^r),
\end{align*}
with Jacobian $J_{r,r}$ over $K$.
The value of $\Delta$ in this case turns out to be
\begin{equation}\label{eq:deltarrp}
\Delta = s^2-4z^{r} = {(AB)^{r-1}C^2c^{2p}},
\end{equation}
and, from Lemma \ref{lemma:discriminant}, the discriminant of $C_{r,r}(a,b)$ is 
\[\Delta_{C_{r,r}(a,b)} = (-1)^\frac{r-1}{2} 2^{2(r-1)} r^r (AB)^\frac{(r-1)^2}{2} (Cc^p)^{r-1}. \]
%\TODO{Note that, since $\Delta$ is a perfect square,  we can consider the two-dimensional Galois representation $\rho_{J_{r,r}, \lambda}$ attached to the Jacobian of $C_{r,r}(a,b)$. Once again, we may compute the odd conductor directly from Theorem \ref{thm:conductoroverK} and Proposition \ref{prop:jacobian}.}
%Once again, we may compute the odd conductor directly from Theorem \ref{thm:conductoroverK}, Lemma \ref{lemma:twist} and Proposition \ref{prop:jacobian}.
In this case we have  $F(x)= (ABab)^\frac{r-1}{2}xh\left(2+\frac{x^2}{ABab}\right) + (AB)^\frac{r-1}{2}(Bb^r-Aa^r)$.
\begin{corollary}

    Let $\id{r}$ be the unique prime ideal of $K$ above $r$, and define $\varepsilon_r$ with the expression
    \[\varepsilon_r = \begin{cases}
        2 & \text{ if } r \nmid ABCc \text{ and } F(x) \text{ is reducible over } \Q_r, \\
        3 & \text{ if } r \nmid ABCc \text{ and } F(x) \text{ is irreducible over } \Q_r, \\
        r+2 & \text{ if } r \mid AB, \\
        3 & \text{ if } r \nmid c \text{ and } v_r(C) = 1, \\
        2 & \text{ if } r \mid c \text{ or } v_r(C) \ge 2. \\
        %2 & \text{ if } r \mid c.       
    \end{cases}
    \]
                   Then, the odd conductor of the representation $\rho_{J_{r,r}, \id{p}}$ is given by %, \textbf{not including the even places}, is given by 
    \[\mathfrak{n}_{\text{odd}}(\rho_{J_{r,r}, \id{p}}) = \id{r}^{\varepsilon_r} \prod_{\substack{\id{q} \mid AB, \\ \id{q} \nmid r}} \mathfrak{q}^2 \prod_{\substack{\id{q} \mid Cc, \\ \id{q} \nmid r}} \mathfrak{q}.
    \]
\end{corollary}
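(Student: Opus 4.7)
The strategy is to identify the primes of bad reduction for $C_{r,r}(a,b)$, determine which row of Table~\ref{table:conductorK} applies at each, and then translate to the conductor of $\rho_{J_{r,r},\lambda}$ via Proposition~\ref{prop:jacobian}. From the expression $\Delta = (AB)^{r-1}C^2 c^{2p}$ together with Lemma~\ref{lemma:discriminant}, the odd primes of bad reduction are $r$ and the rational primes dividing $ABCc$. Using the standard hypothesis $\gcd(Aa, Bb, Cc) = 1$ together with the equation $Aa^r + Bb^r = Cc^p$, one sees that $Aa$, $Bb$ and $Cc$ are pairwise coprime. After absorbing any $r$-th power factors of $A, B, C$ into $a, b, c$ (a harmless move that rewrites the same GFE), the choice $z = -ABab$, $s = (AB)^{(r-1)/2}(Bb^r - Aa^r)$ is seen to satisfy \ref{item1} and \ref{item2}, so Theorem~\ref{thm:conductoroverK} applies.

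For a prime $\id{q}$ of $K$ above a rational prime $q \neq r$, I would split into two cases. If $q \mid Cc$, pairwise coprimality gives $q \nmid ABab$, hence $q \nmid z$; moreover, reducing the GFE modulo $q$ yields $Bb^r \equiv -Aa^r \pmod{q}$, so $s \equiv -2(AB)^{(r-1)/2} Aa^r \pmod{q}$, which is nonzero since $q$ is odd and $q \nmid Aa$. Row~1 of Table~\ref{table:conductorK} then gives conductor exponent $(r-1)/2$, producing exponent $1$ for $\rho_{J_{r,r},\lambda}$ by Proposition~\ref{prop:jacobian}. If instead $q \mid AB$, then $q \mid z$ and the prefactor $(AB)^{(r-1)/2}$ of $s$ forces $q \mid s$; row~2 applies and yields exponent $2$.

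The prime $\id{r}$ above $r$ is handled by analyzing $v_r(\Delta) = (r-1)v_r(AB) + 2v_r(C) + 2p\,v_r(c)$, which is always even. If $r \mid AB$, then $v_r(\Delta) \ge r-1 \ge 4$ and $v_r(s) \ge (r-1)/2$, so both $r \mid s$ and $r \mid \Delta$; row~5 yields $\varepsilon_r = r+2$. If $r \nmid ABCc$, then $v_r(\Delta) = 0$ and rows~3 and 4 give $\varepsilon_r = 2$ or $3$ according to the reducibility of $F(x)$ over $\Q_r$. In the remaining case $r \nmid AB$ but $r \mid Cc$, the crucial step is to show $r \nmid s$: by Fermat's Little Theorem, $Bb^r - Aa^r \equiv Bb - Aa \pmod{r}$, while reducing the GFE modulo $r$ gives $Aa + Bb \equiv Cc^p \equiv 0 \pmod{r}$ (since $r \mid Cc$), whence $Bb - Aa \equiv -2Aa \pmod{r}$, nonzero by coprimality. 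Then row~7 applies when $v_r(\Delta)=2$ (i.e.\ $v_r(C)=1$ and $r \nmid c$), giving $\varepsilon_r = 3$, and row~8 applies when $v_r(\Delta) \ge 3$ (i.e.\ $v_r(C) \ge 2$ or $r \mid c$), giving $\varepsilon_r = 2$.

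The main obstacle is the case analysis at $\id{r}$, specifically the Fermat-based argument showing $r \nmid s$ whenever $r \nmid AB$ but $r \mid Cc$; without it, one would mistakenly apply row~5 and obtain the much larger exponent $r+2$ instead of the correct $2$ or $3$.
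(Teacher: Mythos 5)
Your argument is correct and fills in exactly the case-by-case verification (via Theorem~\ref{thm:conductoroverK}, Proposition~\ref{prop:jacobian}, and the pairwise coprimality of $Aa$, $Bb$, $Cc$) that the paper declares ``straightforward'', so it matches the intended route. The useful details you supply are the Fermat-little-theorem computation showing $r \nmid s$ whenever $r \nmid AB$ but $r \mid Cc$ (which is the step that rules out the $r \mid s$ row and gives $\varepsilon_r \in \{2,3\}$ instead of $r+2$), and the observation that conditions~\ref{item1}--\ref{item2} hold for $(z,s)=\bigl(-ABab,\,(AB)^{(r-1)/2}(Bb^r-Aa^r)\bigr)$ once $r$-th power factors of $A,B,C$ are absorbed into $a,b,c$.
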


\begin{proof}
First, we notice that $z$ and $s$ satisfy conditions \ref{item1} and \ref{item2}. This is clear, since, given a prime $q$ dividing both $z$ and $s$, we have that $q\mid AB$ and so \[v_q(z^r)=rv_q(AB)>(r-1)v_q(AB)=v(s^2).\] Condition \ref{item2} follows from the fact that $A$ and $B$ are free of $r$-th powers.

Notice that, from (\ref{eq:deltarrp}), $\Delta$ is a square, so Lemma \ref{lemma:twist} applies. 
Then, the results follows by applying Theorem \ref{thm:conductoroverK}  and Proposition \ref{prop:jacobian}, along with the coprimality assumption.
\end{proof}
In the case where $A = B = C = 1$, we find that $a-b$ is a rational root of $F(x)$, so $F(x)$ is reducible, therefore $\varepsilon_r = 2$. This recovers the result of \cite[Theorem 5.16]{BCDF}.
The result matches the values obtained in \cite{CelineClusters}. 

\vspace{7pt}

\begin{remark} \label{rem:martin}
 While this article was under the revision process, an independent work by Azon was posted on arXiv \cite{Martinpreprint}. Following a similar methodology, the author in \cite{Martinpreprint} obtains the same results for the conductor at odd places for signatures $(p,p,r)$ and $(r,r,p)$. For readers interested in a comparison of both articles, we provide below a brief correspondence between the two notations (see \cite[Table 1]{Martinpreprint}):
 \begin{table}[H]
\renewcommand{\arraystretch}{1.1}
\begin{tabular}{|c||c|}
\hline
Notation in \cite{Martinpreprint} & Notation in this paper \\
\hline
$\delta_\Q$ & $\sqrt{z}$ \\
\hline
$\delta_\Q^r(2-4s_0)$ & $s$ \\
\hline
$2^4\delta_\Q^{2r}s_0(s_0-1)$ & $\Delta=s^2-4z^r$\\
\hline
\end{tabular}
\caption{Dictionary with the notation in \cite{Martinpreprint}. \label{table:dictorionary}}
\end{table}
%Also, using Table \ref{table:dictorionary} it is not hard to see that conditions \ref{item1} and \ref{item2} are equivalent to Hypotheses 1 and 2 in \cite{Martinpreprint}. 

\end{remark}

%Hence, the odd conductor of the representation is given by
  %  \[\mathfrak{n}_{\text{odd}}(\rho_{J_{r,r}, \lambda}) = \id{r}^{2} \prod_{\substack{\id{q} \mid c, \\ \id{q} \nmid r}} \mathfrak{q},
 %   \]
%and we recover the result of \cite[Theorem 5.16]{BCDF}. 

\subsection{\texorpdfstring{Signature $(2,r,p)$}{Signature (2,r,p)}}
Let $(a,b,c)$ be a non-trivial solution of (\ref{eq:GFE}) for signature $(2,r,p)$. In other words,  the equality $Aa^2+Bb^r=Cc^p$ holds. Recall that we further assume that $\gcd(Aa,Bb,Cc)=1$. Similarly as before, without loss of generality we can assume that $B$ is free of $r$-th powers.

A Frey hyperelliptic curve for signature $(2,r,p)$ and coefficients $A=B=C=1$ was introduced in \cite[Section 1.4.4]{ChenKoutsianasSurvey}. Here we will extend such a construction for arbitrary values of $B$ and $C$, by showing that the new Frey curve fits within our framework.
%To the best of our knowledge, there is no literature on the computation of the conductor of Frey hyperelliptic curves attached to the signature $(2,r,p)$. %To construct one, we first note that, by multiplying the GFE \eqref{eq:GFE} by $A$, we may write \[(Ax)^2 + (AB)y^r = (AC)z^p, \]and, by abusing notation, it is therefore sufficient to study the equation 
%\begin{equation}
%\label{eqn:2rp}
%x^2 + By^r = Cz^p, \quad \gcd(a,b,c) = 1.
%\end{equation}
%Note that now $x$, $B$ and $C$ are not necessarily pairwise coprime. However, by \cite[Theorem 2]{DarmonGranville} we still get that \eqref{eqn:2rp} has a finite number of primitive solutions for each fixed $p \ge 5$. In addition, our computations apply even if $\gcd(x,B,C) \neq 1$. Thus, we shall then consider \eqref{eqn:2rp} without losing any generality.
To get a curve for this equation, we set the following values:
\[z={-Bb} \quad  \text{ and } \quad  s=2B^\frac{r-1}{2}a,\]
so that the curve $C(z,s)$ of (\ref{eqn:C}) becomes 
\[C_{2,r}(a,b) : y^2 = (Bb)^\frac{r-1}{2}xh\left(2+\frac{x^2}{Bb}\right)+2B^\frac{r-1}{2}a,\]
with Jacobian $J_{2,r}$ over $K$.
Note that for $B=C=1$ it matches the curve mentioned in \cite[Section 1.4.4]{ChenKoutsianasSurvey}. In this case, 
\[\Delta=s^2-4z^{r}=4B^{r-1}Cc^p,\]
and the discriminant of the curve $C_{2,r}(a,b)$ is, by Lemma \ref{lemma:discriminant},
\[\Delta_{C_{2,r}(a,b)}= 2^{3(r-1)}r^r(-B^{r-1}Cc^p)^\frac{r-1}{2}.
\]
Here we have $F(x)=(Bb)^\frac{r-1}{2}xh\left(2+\frac{x^2}{Bb}\right)+2B^\frac{r-1}{2}a$.

In this case neither $z$ nor $\Delta$ is a perfect square, therefore Lemma \ref{lemma:twist} cannot be applied. However, we can still compute the odd conductor of $C_{2,r}(a,b)$ over $K$ by using Theorem \ref{thm:conductoroverK}. Moreover, if we assume that the Jacobian of the of $C_{2,r}(a,b)$ is of $\GL_2$-type over $K$, we may use Proposition \ref{prop:jacobian} and get the following result.

\begin{corollary} Let $\id{r}$ be the unique prime ideal of $K$ above $r$, and define $\varepsilon_r$ with the expression
    \[\varepsilon_r = \begin{cases}
        2 & \text{ if } r \nmid BCc \text{ and } F(x) \text{ is reducible over } \Q_r, \\
        3 & \text{ if } r \nmid BCc \text{ and } F(x) \text{ is irreducible over } \Q_r, \\
        r+2 & \text{ if } r \mid B, \\
        \frac{r+5}{2} & \text{ if } r \nmid Bc \text{ and } v_r(C) = 1, \\
        3 & \text{ if } r \nmid Bc \text{ and } v_r(C) = 2, \\
        2 & \text{ if } r \nmid B \text{ and either } r \mid c \text{ or } v_r(C)\ge3.       
    \end{cases}
    \]
%Suppose that the Jacobian of $C_{2,r}(a,b)$ is of $\GL_2$-type over $K$. %, and let $\rho_{J_{2,r}, \id{p}}$ be the two-dimensional Galois representation attached to it, where $\id{p}$ is a prime of $K$ above $p$.  
Then, the odd conductor of the representation $\rho_{J_{2,r}, \id{p}}$ is given by 
    \[\mathfrak{n}_{\text{odd}}(\rho_{J_{2,r}, \id{p}}) = \id{r}^{\varepsilon_r} \prod_{\substack{\id{q} \mid B, \\ \id{q} \nmid r}} \mathfrak{q}^2 \prod_{\substack{\id{q} \mid Cc, \\ \id{q} \nmid Br}} \mathfrak{q}.
    \]
\end{corollary}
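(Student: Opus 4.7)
The plan is to specialize Theorem \ref{thm:conductoroverK} to the curve $C_{2,r}(a,b) = C(-Bb,\; 2B^{(r-1)/2}a)$ and then pass to the representation $\rho_{J_{2,r}, \lambda}$ via Proposition \ref{prop:jacobian}, which multiplies every conductor exponent by $2/(r-1)$. The first step is to verify that $z = -Bb$ and $s = 2B^{(r-1)/2}a$ satisfy hypotheses (i) and (ii) of the setup in Section \ref{Sec:preliminaries}. The only nontrivial case is when an odd prime $q$ divides both $s$ and $z$ through $q \mid a$ and $q \mid b$ (with $q \nmid B$); here $\gcd(a,b,c) = 1$ forces $q \nmid c$, and examining the $q$-adic valuations of the terms in $a^2 + Bb^r = Cc^p$ yields the required inequalities. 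The same equation also gives $\Delta = s^2 - 4z^r = 4B^{r-1}(a^2 + Bb^r) = 4B^{r-1}Cc^p$, so the odd primes of bad reduction are $\mathfrak{r}$ together with the primes dividing $BCc$.

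For an odd prime $\id{q}$ of $K$ lying above a rational prime $q \ne r$, I would split into two subcases. When $\id{q} \mid B$, one has $\id{q} \mid s$ and $\id{q} \mid \Delta$, so row two of Table \ref{table:conductorK} gives $\mathfrak{n}(C_{2,r}/K_\id{q}) = r-1$, which becomes $\id{q}^2$ after Proposition \ref{prop:jacobian}. When $\id{q} \mid Cc$ but $\id{q} \nmid B$, the coprimality of $(a,b,c)$ applied to $a^2 + Bb^r = Cc^p$ rules out $\id{q} \mid a$, hence $\id{q} \nmid s$, and row one yields $\mathfrak{n}(C_{2,r}/K_\id{q}) = (r-1)/2$, contributing a single factor of $\id{q}$. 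Together these account for the two products in the statement.

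The heart of the proof is the analysis at the prime $\mathfrak{r}$, which can hit any of the six remaining rows of Table \ref{table:conductorK}. Here one computes $v_r(\Delta) = v_r(C) + p\,v_r(c)$ whenever $r \nmid B$, and uses that $v_r(Bb^r) = r\,v_r(b)$ lies in $r\mathbb{Z}_{\ge 0}$ to pin down which row applies. Concretely: $r \nmid BCc$ falls into row three or four depending on whether $F(x)$ is reducible over $\Q_r$; $r \mid B$ forces $r \mid s$ and $r \mid \Delta$ and gives row five; $r \mid c$ with $r \nmid B$ forces $r \nmid a$ (else all of $a,b,c$ would be divisible by $r$) and $v_r(\Delta) \ge p \ge 5$, giving row eight; finally $r \mid C$ with $r \nmid Bc$ triggers rows six, seven or eight according to whether $v_r(C)$ equals $1$, $2$ or $\ge 3$. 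In each subcase, multiplication by $2/(r-1)$ yields the value of $\varepsilon_r$ in the statement.

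The main obstacle is the case work at $\mathfrak{r}$, in particular verifying that the $r$-adic valuation constraints from $a^2 + Bb^r = Cc^p$ force the subcases to trigger disjointly. The key mechanism is that $v_r(Bb^r)$ is either $0$ or at least $r$, so it cannot fill the gap between $v_r(a^2)$ and $v_r(Cc^p)$ when $v_r(C) \le 2$ and $r \nmid c$; this is precisely what rules out $r \mid a$ in those subcases and cleanly separates row five from rows six and seven.
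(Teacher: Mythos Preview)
Your proposal is correct and follows exactly the approach the paper intends: the corollary is stated without a detailed proof, with the understanding that one specializes Theorem~\ref{thm:conductoroverK} to the values $z=-Bb$, $s=2B^{(r-1)/2}a$, computes $\Delta = 4B^{r-1}Cc^p$, and then applies Proposition~\ref{prop:jacobian} together with the coprimality assumption, exactly as you outline. Your case analysis at $\id{r}$ and at the remaining odd primes is the intended one, and your identification of the valuation mechanism $v_r(Bb^r)\in\{0\}\cup r\Z_{\ge 1}$ as the tool separating the subcases is precisely what makes the argument work.
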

\begin{proof} By the recent work of the second author with Chen \cite[Sections 2.3.3 and 4.4]{ChenVillagra}, $J_{2,r}$ is of $\GL_2$-type over $K$ and so we only have to prove that conditions \ref{item1} and \ref{item2} are satisfied. Given a prime $q$ dividing both $z$ and $s$, we have that $q\mid B$ and so \[v_q(z^r)=rv_q(B)>(r-1)v_q(B)=v(s^2).\] Condition \ref{item2} follows from the fact that $B$ is free of $r$-th powers.
\end{proof}

The following remark shows it is sufficient to prove non-existence of non-trivial primitive solutions for some GFE equation with signature $(2,r,p)$, in order to prove non-existence of non-trivial primitive solutions of the GFE with signature $(r,r,p)$. In fact, it is sufficient to consider only equations with $A = 1$.%, as in \eqref{eqn:2rp}.

\begin{remark} If $(a,b,c)$ is a solution to 
\begin{align}\label{eq:2rp}
    Ax^r+By^r=Cz^p, \quad \gcd(Ax,By,Cz) = 1,
\end{align}
 then $(Bb^r-Aa^r,ab,c^2)$ is a solution to 
\begin{equation} \label{eq:blda}
    x^2+4ABy^r=C^2z^p, \quad \gcd(x, ABy, Cz) = 1.
\end{equation}
 In other words, in order to approach equations of signature $(r,r,p)$, we may restrict to the study of GFE of signature $(2,r,p)$ with certain coefficients. Also, we note that the coefficient $4$ can be removed from $(\ref{eq:blda})$ if we restrict to a particular type of solutions, namely those with $Cc$ even. Indeed, if $(a,b,c)$ is a solution of (\ref{eq:2rp}) and $Cc$ is even, then $\left(({Bb^r-Aa^r})/{2},ab,c^2\right)$ is a solution to $x^2+ABy^r=C^2z^p/4$, if $2\mid C$, and $\left(({Bb^r-Aa^r})/{2},ab,c^2/4\right)$ is a solution to $x^2+ABy^r=C^2z^p$, if $2\mid c$.
 \end{remark}

 \begin{remark}

    If $A \neq 1$, we could use the following values of $z$ and $s$:
    \[z = -ABb, \quad \text{and} \quad s = 2A^\frac{r+1}{2}B^\frac{r-1}{2}a.\]
    %These values satisfy that $v_q(z^r) < v_q(s^2)$ and, therefore, condition ** is violated. 
    In general, these values do not satisfy condition 
    \ref{item1}. We believe that a straightforward - but messy - adaptation of the techniques in this paper could allow to consider this case, but we have avoided it to allow for a clearer treatment overall.
    
 Some cases of this equation where $A \neq 1$ are currently being explored in work in progress of the two authors with Koutsianas.
 \end{remark}

\subsection{\texorpdfstring{Signature $(q,r,p)$}{Signature (q,r,p)}} 
\label{Sec:futurework}
In this subsection, and considering that we have already treated the cases $(p,p,r)$, $(r,r,p)$ and $(2,r,p)$, we restrict to the general case of signature $(q,r,p)$ where $q$, $r$ and $p$ are different odd prime numbers and $p$ is the only free variable. Recall that, while considering equation
\begin{align*}
    Ax^q+By^r=Cz^p,
\end{align*}
we study integer solutions satisfying $\gcd(aA,bB,cC)=1$. Also, we will assume, without loss of generality, that $A$ is free of $q$-th powers and $B$ is free of $r$-th powers.

In the recent beautiful article \cite{GolfieriPacetti}, Golfieri and Pacetti suggest a modification to step (1) in the application of the modular method.
Instead of using geometric objects as the source for %residual 
Galois representations, the authors consider different objects, called hypergeometric motives (we will abbreviate them as HGM and we invite the reader to read \cite[Section 4]{hgm} for the definition of HGMs), in order to get a characteristic zero representation (see \cite[page 5]{GolfieriPacetti}). The construction is very explicit, and in Section 2.6 of loc.\ cit.\ they show that the following hypergeometric motives can be attached to the GFE equation \eqref{eq:GFE}:
%that we can attach to $(\ref{eq:GFE})$ (where $p$ is the free variable) the HGMs 
\begin{align*}
   &M_{q,r}^+(t)=\HGM\left(\left(\frac{1}{r},-\frac{1}{r}\right),\left(\frac{1}{q},-\frac{1}{q}\right)\Big|t\right),\\ 
   &M_{q,r}^-(t)=\HGM\left(\left(\frac{1}{2r},-\frac{1}{2r}\right),\left(\frac{1}{q},-\frac{1}{q}\right)\Big|t\right). 
\end{align*}

These HGMs are defined over the totally real field $F=\Q(\zeta_q)^+\Q(\zeta_r)^+$. %Let $G_F$ be the absolute Galois group of $F$. 
If $(a,b,c)$ is a solution to (\ref{eq:GFE}) satisfying $\gcd(Aa,Bb,Cc)=1$, we will need to specialize the HGMs at the value $t_0=-{Aa^q}/{Bb^r}$, so that  $M_{q,r}^\pm(t_0)$ have monodromy matrices at $\{0,\infty,1\}$ of order $\{q,\infty,r\}$, respectively.
\vspace{5pt}

Let $\{\rho^{\pm}_\lambda\}_\lambda$ be the compatible system of Galois representations associated to  $M_{q,r}^\pm(t_0)$. If $\id{p}$ is an ideal of $\cO_F$ above $p$, the conductor exponent of $\bar{\rho}^\pm_\id{p}$ at primes not dividing $qr$ can be computed from \cite[Propositions 2.11 and 2.13]{GolfieriPacetti}. All that remains is to compute the conductor exponent for primes dividing $qr$ and our contribution allows to compute the wild conductor exponent, as shown in the following proposition. First, let 
\[F(x) = (-B^2b^2)^\frac{r-1}{2}xh\Big(2-\frac{x^2}{B^2b^2}\Big)+2B^{r-1}(Bb^r+2Aa^q). \]

\begin{comment}
Let $\{\rho^{\pm}_\lambda\}_\lambda$ be the compatible system of Galois representations associated to  $M_{q,r}^\pm(t_0)$. %Under these circumstances, 
In \cite[page 25]{GolfieriPacetti} it is shown that 
%the compatible system of Galois representations attached to $M_{q,r}^\pm(t_0)$ 
$\rho_\lambda^\pm$
matches the representation $J_{q,r}^\pm$ in \cite{Darmon}. In addition, under certain conditions on the solutions, they also prove the modularity of %the Galois representation 
$\rho^{\pm}_\lambda$ 
%associated to the HGM $M_{q,r}^\pm(t_0)$ 
(see Theorems 2.10 and 2.12 of loc.\ cit.), which is necessary for step (2) of the modular method.

Moreover, if $\id{p}$ is an ideal of $\cO_F$ above $p$, the conductor exponent of $\bar{\rho}^\pm_\id{p}$ at primes not dividing $qr$ can be computed from \cite[Propositions 2.11 and 2.13]{GolfieriPacetti}. All that remains is to compute the conductor exponent for primes dividing $qr$ and our contribution allows to compute the wild conductor exponent, as shown in the following proposition. First, let 
\[F(x) = (-B^2b^2)^\frac{r-1}{2}xh\Big(2-\frac{x^2}{B^2b^2}\Big)+2B^{r-1}(Bb^r+2Aa^q). \]
\end{comment}

\begin{proposition} Keep the previous notation, and let $\id{r}$ be a prime of $\cO_F$ above $r$. Then the wild conductor exponent of $\bar{\rho}^\pm_\id{p}$ at $\id{r}$ equals $0$ if $F(x)$ is reducible. If $F(x)$ is irreducible, then it equals
\begin{align*}
\mathfrak{n}_{\text{wild},\id{r}}(\bar{\rho}^\pm_\id{p})=\begin{cases}
    0 & \text{if } r\nmid \Delta,\\
     \frac{r+1}{2} & \text{if } v_r(\Delta)=1,\\
    1 & \text{if }  v_r(\Delta)\in\{0,2\},\\
    r % CORRECCION FRANCO
    & \text{if } r\mid B,\\    
    \end{cases}
\end{align*}
where $\Delta=16B^{2(r-1)}ACa^qc^p$.
\end{proposition}
\begin{proof}
Since $\{\rho_\lambda^\pm\}_\lambda$ is a compatible system of Galois representations, then 
\begin{align} \label{eq:wildconductor}
    \mathfrak{n}_{\text{wild},\id{r}}(\bar{\rho}^\pm_\id{p})=\mathfrak{n}_{\text{wild},\id{r}}(\bar{\rho}^\pm_\id{P}), \quad  \text{ for all } \id{P}\nmid r.
\end{align}
This is a well-known fact, and we refer the reader to \cite[Proposition 3.1.42]{Wise} for a proof.

 Let $\id{q}$ and $\id{p}_2$ be primes of $\cO_F$ above $q$ and $2$, respectively.  Since both HGMs $M_{q,r}^\pm$ are congruent modulo $\id{p}_2$, by \cite[Theorem 1.10]{GolfieriPacetti} (originally stated in \cite[Theorem 8.3]{originalHGM}), so are their Galois representations. Therefore, by (\ref{eq:wildconductor}), it is sufficient to  
show the result for $M_{q,r}^+$. Again, by \cite[Theorem 1.10]{GolfieriPacetti} (or \cite[Theorem 8.3]{originalHGM}), $M_{q,r}^+(t_0)$ is congruent modulo $\id{q}$ to the motive
\[\HGM\left(\left(\frac{1}{r},-\frac{1}{r}\right),\left(1,-1\right) \Big| t_0\right),\] 
 which turns out to match the curve $C_r^+(t_0)$ from \cite{Darmon} (as shown in \cite[Corollary A.5]{GolfieriPacetti}). In other words, the representation $\bar{\rho}^+_\id{q}$ is isomorphic to $\bar{\rho}_{J_r^+(t_0),\id{q}}|_{G_F}$ , where $J^+_r(t)$ is the Jacobian of $C^+_r(t)$ and $G_F$ is the absolute Galois group of $F=\Q(\zeta_r)^+\Q(\zeta_q)^+$. Let $\id{q}_r$ be a prime in $K=\Q(\zeta_r)$ above $r$. Since $\id{q}_r$ is unramified in $F/K$, then $\mathfrak{n}_{\text{wild}}(C_r^+/F_\id{r})=\mathfrak{n}_{\text{wild}}(C_r^+/K_{\id{q}_r})$, by \cite[Lemma 2.2]{CelineClusters}. That is, it is sufficient to consider the wild conductor over $K$.

Let us consider the following additional hyperelliptic curve introduced in \cite{Darmon}:
\[C_r^-(t): y^2=(-1)^\frac{r-1}{2}xh(2-x^2)+2-4t.\] 
By \cite[Remark 2.10]{CelineClusters}, $\mathfrak{n}_{\text{wild}}(C_r^+(t_0))=\mathfrak{n}_{\text{wild}}(C_r^-(t_0))$. 

\vspace{5pt}

Set $z_0=B^2b^2$ and $s_0=2B^{r-1}(Bb^r+2Aa^q)$. First, let us show that $z_0$ and $s_0$ satisfy conditions \ref{item1} and \ref{item2}. If $\ell$ is an odd prime dividing both $z_0$ and $s_0$, then $\ell\mid B$. Since $\gcd(aA,bB,cC)=1$, $\ell\nmid Bb^r +2Aa^q$ and so
\[v_\ell(z_0^r)\ge2rv_\ell(B)>2(r-1)v_\ell(B)=v_\ell(s_0^2).\]
On the other hand, since we are assuming that $B$ is free of $r$-th powers, then $r\nmid v_\ell(s_0)$.

\vspace{5pt}

Recall that we want to specialize the HGM at $t_0=-Aa^q/Bb^r$. Since $C_r^-(t_0)=C(1,2-4t_0)$, by Lemma \ref{lemma:ultimo} we can twist by $Bb$ to get the model
\[C(z_0,s_0): y^2 = (-B^2b^2)^\frac{r-1}{2}xh\Big(2-\frac{x^2}{B^2b^2}\Big)+2B^{r-1}(Bb^r+2Aa^q).\]
By Lemma \ref{lemma:discriminant}, its discriminant equals
$\Delta_{C(z_0,s_0)}=(-1)^{\frac{r-1}{2}} 2^{2(r-1)} r^r \Delta^{\frac{r-1}{2}}$, with \[\Delta=s_0^2-4z_0^r=16B^{2(r-1)}ACa^qc^p.\]
 Specializing the computation of the wild conductor in (\ref{eq:wild-over-K}) at these values, together with Proposition \ref{prop:jacobian} (we can apply it since $z_0$ is a perfect square and hence Lemma \ref{lemma:twist} gives that $C(z_0,s_0)$ is of $\GL_2$-type) allows us to prove the result by taking $\id{P} = \id{q}$ in \eqref{eq:wildconductor}.
%Now, note that   $C_r^-(t_0)$ is a specialization of the curve $C(u)$ in (\ref{eq:hypOriginal}) at $u_0=2-4t_0$.
%and it is easy to see that 
%it  is  isomorphic to a quadratic twist of $C(z_0,s_0)$, where $z_0=B^2b^2$ and ${s_0=2B^{r-1}(Bb^r+2Aa^q)}$. Specializing the computation of the wild conductor in Theorem \ref{thm:conductoroverK} at these values, together with Proposition \ref{prop:jacobian} (we can apply it since $z_0$ is a perfect square and hence Lemma \ref{lemma:twist} gives that $C(z_0,s_0)$ is of $\GL_2$-type) allows us to prove the result by taking $\id{P} = \id{q}$ in \eqref{eq:wildconductor}.
\end{proof}

\begin{remark}
Note that, since $p$ is the free variable and $q$ and $r$ are fixed, the conductor exponent at $q$ follows simply by interchanging $q$ and $r$, $A$ and $B$ and $a$ and $b$ in the previous proposition.
\end{remark}

 %------------------------------------------
%------------------------------------------

\bibliographystyle{alpha}
	\bibliography{biblio}

\end{document}